\title{Robustness of ergodic properties of nonautonomous piecewise expanding maps}
\author{Matteo Tanzi\footnote{m.tanzi13@imperial.ac.uk}, ~ Tiago Pereira\footnote{tiago.pereira@imperial.ac.uk}, and ~  Sebastian van Strien\footnote{s.van-strien@imperial.ac.uk}  \\
Department of Mathematics, Imperial College London\\ }
\date{}
\newcommand{\diff}{\mathrm{d}}
\newcommand{\mc}{\mathcal}
\newcommand{\mb}{\mathbb}
\newcommand{\R}{\mb R}
\newcommand{\C}{\mb C}
\newcommand{\N}{\mb N}
\newcommand{\eea}{\end{align}}
\renewcommand{\epsilon}{\varepsilon}
\renewcommand{\bar}{\overline}
\renewcommand{\tilde}{\widetilde}
\newcommand{\bo}{\boldsymbol}
\renewcommand{\phi}{\varphi}
\def\hg{{\hat \gamma}}
\newtheorem{theorem}{Theorem}
\newtheorem{theo}{Theorem}
\newtheorem{corollary}{Corollary}
\newtheorem{lemma}{Lemma}
\newtheorem{proposition}{Proposition}
\theoremstyle{definition}
\newtheorem{definition}{Definition}
\theoremstyle{remark}
\newtheorem{remark}{Remark}
\newtheorem{example}{Example}
\newtheoremstyle{algorithm}
{4pt}
{4pt}
{}
{}
{}
{:}
{\newline}
{}
\newtheorem{algorithm}{Algorithm}
\newcommand{\balgorithm}{\begin{algorithm}\begin{framed}\ }
\newcommand{\ealgorithm}{\end{framed}\end{algorithm}}
\newcommand{\bd}{\begin{definition}}
\newcommand{\ed}{\end{definition}}
\newcommand{\bt}{\begin{theorem}}
\newcommand{\et}{\end{theorem}}
\newcommand{\bp}{\begin{proposition}}
\newcommand{\ep}{\end{proposition}}
\newcommand{\bc}{\begin{corollary}}
\newcommand{\ec}{\end{corollary}} 
\newcommand{\bl}{\begin{lemma}}
\newcommand{\el}{\end{lemma}}
\newcommand{\br}{\begin{remark}}
\newcommand{\er}{\end{remark}}
\DeclareMathOperator{\spec}{spec}
\DeclareMathOperator{\osc}{osc}
\DeclareMathOperator{\essup}{Esup}
\DeclareMathOperator{\esinf}{Einf}
\DeclareMathOperator{\supp}{supp}
\DeclareMathOperator{\clos}{Clos}
\DeclareMathOperator{\inter}{Int}
\DeclareMathOperator{\diam}{diam}
\begin{document}
\maketitle

\begin{abstract}
Recently, there has been an increasing interest in nonautonomous composition of perturbed hyperbolic systems: composing
 perturbations of a given hyperbolic map  $F$ results in statistical behaviour close
to that of $F$. We show this fact in the case of piecewise regular expanding maps. In particular, we impose conditions on perturbations of this class of maps that include situations slightly more general than what has been considered so far, and prove that these are stochastically stable in the usual sense. We then prove that the evolution of a given distribution of mass under composition of time dependent perturbations (arbitrarily - rather than randomly - chosen at each step) close to a given map  $F$ remains close to the invariant mass distribution of $F$. Moreover, for almost every point, Birkhoff averages along trajectories do not fluctuate wildly. This result complements recent results on memory loss for nonautonomous dynamical systems.
\end{abstract}

\section{Introduction}

During the last decade there has been an increasing focus on nonautonomous dynamical systems meaning that, rather than iterating a single map $F:M\rightarrow M$ on the given phase space $M$, one looks at the evolution under composition of several different self-maps of $M$. The main motivation for this point of view is that in practical applications, the map $F_t$ describing 
how the state variable evolves from time $t$ to time $t+1$ should depend on $t$. In this paper, the only assumption we make on $F_t$  
 is that at each time it is close to a given map $F$.
So we will  {\em not} assume that the choice of the maps $F_t$ follows some random distribution
nor that the system can be written as a skew-product. 

Nonautonomous composition of small perturbations of  a given dynamical system can lead to the pointwise destruction of typical statistical behaviour. For example, in \cite{MR1041523} the authors show that for any point in phase space one can construct a sequence of time dependent perturbations that makes the point evolve arbitrarily close to a periodic orbit of the unperturbed map. This means that even if the Birkhoff averages along the unperturbed orbit were typical, this is not the case for the nonautonomous dynamics. We would like to answer the question whether under the same nonautonomous evolution, the change of statistical behaviour by small perturbations  can be observed on a set of positive measure  (w.r.t. the reference measure). We address the problem in the case of multidimensional piecewise maps \cite{Saus}.   

We first define a collection of perturbations to a given multidimensional piecewise expanding map $F$, and we prove that the system is stochastically stable
for this type of perturbations (meaning that perturbed maps with perturbations of given small magnitude have an invariant density which is close to the invariant density for the unperturbed system). Then we prove that the evolution of sufficiently regular mass distributions under the time-dependent dynamics become, up to a fixed precision, close to the mass distribution  which is invariant under $F$. We then use this result together with a law of large numbers for dependent random variables to prove that, given sufficiently regular observables, for almost every point the accumulation points of the sequence of Birkhoff averages is close to the expectation of the observable with respect to the invariant measure.

Our results can be applied to certain dynamical systems defined on networks  whose  topology slightly changes over time. In  applications, we have in mind some of the edges of the network are occasionally broken due to mechanical failures. Under certain settings we show that such intermittent mechanical failures do not significantly change the ergodic properties of the system.

The paper is organised as follows. In Section~\ref{sec:statements} we state the main results and discuss the existing literature. In Section~\ref{Sec:examples} we  give some applications of the main result, in particular to dynamics on networks with changing topologies.   In Section~\ref{sec:prfthmB} we give the precise definitions and prove the main theorem. The proof we use relies on  spectral stability. In the appendix, we formulate a related result for $C^{1+\nu}$-expanding maps, presenting a different approach relying on invariant cones. We provide an extensive outline the ideas of the proofs of both results.

\section{Statements of the Results}\label{sec:statements}

We consider the class of maps introduced by \cite{Saus}.   The phase space is $\Omega\subset\R^N$, a compact subset of $\R^N$, which is decomposed in a fixed number of domains (allowed to slightly change in the perturbed versions of the maps). The domains can have fractal boundaries, 
and the restriction to each of them is regular. The precise hypotheses that a map $F:\Omega\rightarrow\Omega$ and its perturbations 
 must satisfy are given in properties (ME1)-(ME6) (Section \ref{sec:prfthmB}) and  (CM1)-(CM2) (Section \ref{PertPiecCaseSec}). 
Under these assumptions we obtain the following
 
\begin{theo}
\label{thmA}

Let $F_{\hg}$ be a piecewise expanding map on the compact set $\Omega\subset\R^N$ belonging to a collection of maps $\{F_{\gamma}\}_{\gamma \in \Gamma}$ satisfying (ME1)-(ME6), continuous at $\hg\in \Gamma$ ((CM1)-(CM2)). Then, 
for each  $\epsilon>0$ there exists $\delta>0$ so that:
\begin{enumerate}
\item[(1)] if $\nu$ is a Borel probability measure on $\Gamma$ with $\supp \nu\subset B_\delta(\hg)$, then there exists $\phi_\nu$ stationary density for the random dynamical system obtained composing independently maps of the family according to $\nu$ and it satisfies
$$
\|\phi_\nu-\phi_{\hg}\|_1\leq\epsilon,
$$
in particular, $\forall \gamma\in B_{\delta}(\hg)$,  $F_\gamma$ has an invariant density $\phi_\gamma$ and
$$
\|\phi_\gamma-\phi_{\hg}\|_1\leq \epsilon;
$$
\item[(2)]  if $\bo \gamma\in B_\delta(\hg)^{\N}$ then for every probability measure $\mu=\phi m$ with density $\phi\in V_\alpha$ ($V_\alpha\subset L^1$ is defined in 
(\ref{QuasHoldSpac}) ), 
 there exists $\bar n:=\bar  n(\epsilon,\phi)\in\N$ such that for every $n>\bar n$  the Radon-Nykodim derivative $\frac{d}{dm}({F^n_{\bo \gamma}}_*\mu)$ has a representative in $V_\alpha$, and
\begin{equation}
\left\|\frac{d}{dm}(F^n_{\bo \gamma})_*\mu-\phi_{\hg}\right\|_1<\epsilon;
\label{asympt-dens} \end{equation}
\item[(3)]
moreover, for any observable  $\psi \in V_{\alpha}$ there exists a set $X_{\bo \gamma}$ of full measure so that for every $x\in X_{\bo \gamma}$ 
$$
\int \psi   d \mu_{\hg}-\epsilon \mb \|\psi \|_1 \leq \liminf_{n\rightarrow\infty}\frac{1}{n}S_n(\psi)(x)\leq \limsup_{n\rightarrow\infty}\frac{1}{n}S_n(\psi)(x)\leq \int \psi d\mu_{\hg}+\epsilon\mb \| \psi \|_1
$$
where $\mu_{\hg} = \phi_{\hg}m$ and $S_n(\psi)(x)=\psi(x)+\sum_{i=1}^{n-1}\psi\circ F_{\gamma_i}\circ... \circ F_{\gamma_1}(x)$.
\end{enumerate}
\end{theo}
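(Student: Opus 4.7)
The plan is to base everything on a single analytic engine: a uniform Lasota-Yorke inequality in $V_\alpha$ for the transfer operators $\mc L_\gamma$ of $F_\gamma$ with $\gamma \in B_\delta(\hg)$, combined with the continuity $\|\mc L_\gamma - \mc L_{\hg}\|_{V_\alpha \to L^1} \to 0$ supplied by (CM1)-(CM2). First I would verify that (ME1)-(ME6) yield the Saussol-type bound $\|\mc L_\gamma \phi\|_{V_\alpha} \le \lambda \|\phi\|_{V_\alpha} + B\|\phi\|_1$ with $\lambda < 1$ and $B$ independent of $\gamma$ in a small neighbourhood of $\hg$; iterating, the same shape survives arbitrary compositions, $\|\mc L_{\gamma_n}\cdots\mc L_{\gamma_1}\phi\|_{V_\alpha} \le A\lambda^n\|\phi\|_{V_\alpha} + B'\|\phi\|_1$. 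The Keller--Liverani perturbation theorem then yields a uniform spectral picture for the individual operators: a simple eigenvalue $1$ with eigendensity $\phi_\gamma$ depending $L^1$-continuously on $\gamma$, and a spectral gap $\rho < 1$ valid uniformly on $B_\delta(\hg)$.

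For part (1), the annealed operator $\mc L_\nu := \int \mc L_\gamma\, d\nu(\gamma)$ inherits the Lasota-Yorke bound by convexity, and the operator norm $\|\mc L_\nu - \mc L_{\hg}\|_{V_\alpha \to L^1}$ is bounded by $\sup_{\gamma \in \supp \nu} \|\mc L_\gamma - \mc L_{\hg}\|_{V_\alpha \to L^1}$, hence arbitrarily small by (CM1)-(CM2). A further application of Keller--Liverani gives a stationary density $\phi_\nu$ with $\|\phi_\nu - \phi_{\hg}\|_1 \le \epsilon$; specialising $\nu$ to a Dirac mass at $\gamma$ recovers the deterministic statement. For part (2), I would employ the telescoping decomposition
\[
\mc L^n_{\bo\gamma} \phi - \phi_{\hg} = \mc L^n_{\bo\gamma} (\phi - \phi_{\hg}) + \sum_{k=1}^n \mc L_{\gamma_n}\cdots \mc L_{\gamma_{k+1}} (\mc L_{\gamma_k} - \mc L_{\hg}) \phi_{\hg},
\]
with $\mc L^n_{\bo\gamma} := \mc L_{\gamma_n}\circ\cdots\circ\mc L_{\gamma_1}$. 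The first term has integral zero and decays exponentially in $n$, provided a uniform contraction on the zero-mean subspace for arbitrary nonautonomous compositions is available. Each summand can be bounded by the minimum of (i) its $L^1$ norm, small by the continuity hypothesis applied to $\phi_{\hg} \in V_\alpha$, and (ii) a $V_\alpha$-norm (controlled by $2\|\phi_{\hg}\|_{V_\alpha}$) times the spectral-gap factor $\rho^{n-k}$; optimising the cutoff between regimes yields a total of order $\eta(\delta)|\log\eta(\delta)|$ where $\eta(\delta) := \sup_{\gamma \in B_\delta(\hg)}\|\mc L_\gamma - \mc L_{\hg}\|_{V_\alpha \to L^1}$, which is small with $\delta$.

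Part (3) follows from (2) by a variance and Borel-Cantelli argument. For $x \sim \mu$, the expectation $\E_\mu[\psi \circ F_{\gamma_i}\circ\cdots\circ F_{\gamma_1}] = \int \psi\, d((F^i_{\bo\gamma})_* \mu)$ is $\epsilon\|\psi\|_1$-close to $\int \psi\, d\mu_{\hg}$ for $i \ge \bar n$ by (\ref{asympt-dens}), so the Ces\`aro mean of these expectations converges to $\int \psi\, d\mu_{\hg}$ within the stated error. To upgrade to an almost sure statement I would prove exponential decay of the covariances $\E_\mu[X_i X_j] - \E_\mu[X_i]\E_\mu[X_j]$, with $X_i := \psi \circ F_{\gamma_i}\circ\cdots\circ F_{\gamma_1}$, in $|j-i|$; this is a standard duality-plus-spectral-gap computation that uses the uniform nonautonomous contraction on zero-mean densities once more. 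Summing gives $\Var_\mu(n^{-1}S_n(\psi)) = O(n^{-1})$; Chebyshev applied along the subsequence $n_k = k^2$ and Borel-Cantelli produce the almost sure bound on that subsequence, and the elementary estimate $|S_n - S_{n_k}| \le (n - n_k)\|\psi\|_\infty$ for $n_k \le n < n_{k+1}$ fills the gaps. The main technical obstacle I anticipate is establishing the uniform spectral gap for nonautonomous compositions on the zero-mean subspace: Keller--Liverani delivers spectral stability only for individual $\mc L_\gamma$, and the required uniform contraction along arbitrary compositions $\mc L_{\gamma_n}\cdots\mc L_{\gamma_1}$ with $\gamma_i \in B_\delta(\hg)$ likely calls for either a cone or projective-metric argument in the spirit of Liverani, or a careful joint iteration of the Lasota-Yorke bound exploiting that every $\mc L_\gamma$ preserves total mass.
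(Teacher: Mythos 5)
Your proposal is correct in its architecture and reaches the same conclusions, but by a genuinely different route in Parts (2) and (3), so it is worth contrasting. For Part (2) you telescope through the operator differences, writing
\[
\mc L^n_{\bo\gamma}\phi - \phi_{\hg} = \mc L^n_{\bo\gamma}(\phi-\phi_{\hg}) + \sum_{k=1}^{n} \mc L_{\gamma_n}\cdots\mc L_{\gamma_{k+1}}\bigl(\mc L_{\gamma_k}-\mc L_{\hg}\bigr)\phi_{\hg},
\]
whereas the paper expands $\mc L_{\gamma_n}\cdots\mc L_{\gamma_1}\phi$ by inserting, after each $\mc L_{\gamma_j}$, the spectral projections $\pi_0^{(\gamma_j)},\pi_1^{(\gamma_j)}$ of the corresponding perturbed operator; since the zero-mean subspace $X_0$ is common to every $\gamma$, only the non-increasing index strings survive, yielding
\[
\mc L_{\gamma_n}\cdots\mc L_{\gamma_1}\phi = \phi_{\gamma_n} + \mc L_{\gamma_n}\cdots\mc L_{\gamma_1}(\phi-\phi_{\gamma_1}) + \sum_{j=2}^{n}\mc L_{\gamma_n}\cdots\mc L_{\gamma_j}(\phi_{\gamma_{j-1}}-\phi_{\gamma_j}).
\]
Algebraically these are equivalent, but the paper's bookkeeping has a concrete payoff: the differences $\phi_{\gamma_{j-1}}-\phi_{\gamma_j}$ are already uniformly small in $L^1$ by Part (1), so one only needs the $\|\cdot\|_1$-contraction (P3) for them, and in the subsequent two-step induction the argument \emph{restarts} on each block of length $\bar n$, so decay of nonautonomous compositions on $X_0$ is only ever invoked for a \emph{fixed, bounded} number of iterates — which one can obtain by comparing the block to $\mc L_{\hg}^{\bar n-\tilde n}$ via the perturbation bound (Prop.~\ref{MultiPertRes}) and then using the spectral gap of the single unperturbed operator. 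Your telescoping sum, by contrast, demands a uniform bound $\|\mc L_{\gamma_n}\cdots\mc L_{\gamma_{k+1}}|_{X_0}\| \lesssim \rho^{n-k}$ for \emph{all} lengths $n-k$ at once. You are right to flag this as the main obstacle: Keller--Liverani gives spectral stability of each individual $\mc L_\gamma$ but not this cocycle-type contraction, and the quantity $\|\mc L_\gamma^n|_{X_0}\|$ is bounded by $C_\gamma q^n$ with a constant that does not trivially compose. To fill the gap you would block the composition into pieces of a fixed length $n_0$, choose $\delta=\delta(n_0)$ from Prop.~\ref{MultiPertRes} so each block is $\tilde s^{n_0}$-close to $\mc L_{\hg}^{n_0}$, transfer the spectral gap of $\mc L_{\hg}^{n_0}$ blockwise, and interpolate with the uniform Lasota--Yorke bound of Lemma~\ref{BounNormPertActMult} to control the $V_\alpha$-norm between blocks; your proposal gestures at this but should carry it out, since nothing in Keller--Liverani alone delivers it. For Part (3) the difference is one of presentation only: you reprove the strong law by hand (variance $O(1/n)$, Chebyshev along $n_k=k^2$, Borel--Cantelli, filling gaps using $\|\psi\|_\infty<\infty$, which is legitimate since $V_\alpha\subset L^\infty$), whereas the paper invokes the law of large numbers for correlated sequences of \cite{Walk2004}, which only needs $\sum_k r(k)/k<\infty$. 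Both rest on exactly the same covariance decay $\mb E[(\psi_i-\mb E\psi_i)(\psi_j-\mb E\psi_j)]=O(\rho^{|j-i|})$, which again requires the nonautonomous $X_0$-contraction discussed above.
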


Part (1) of the theorem proves stochastic stability of the collection and in particular proves continuous
dependence of invariant measures w.r.t. the size of the perturbation. Part (2) shows that the evolution $(F^n_{\bo \gamma})_*\mu$ of the mass $\mu$ with density $\phi$ remains close
to the invariant measure (indeed, this holds in the $L^1$ sense for the densities). For many applications one cannot be confident that $\bo \gamma$
is chosen randomly, and therefore the assertion from Part (2) is more useful 
than having merely stochastic stability.  Part (3) shows that one has quasi-Birkhoff behaviour for 
time averages, meaning that the accumulation points of the time averages remain close to the
space average of the observable w.r.t. the invariant measure of the unperturbed system.

\begin{remark}
Previous results on stochastic stability \cite{Cow} for piecewise expanding maps require
the domains of the partition elements to have piecewise smooth boundaries. 
\end{remark}
\begin{remark}
In the appendix we also formulate a related result in the context of $C^{1+\nu}$-expanding maps, using the
contraction properties of the transfer operator on suitable cones. The improved regularity 
results in uniform estimate, rather than the $L^1$ estimate in Part (2) above.
We will discuss previous results, give an overview of the literature  and also the ideas of the proofs at the end of this section.
\end{remark}

\begin{remark}
Keller \cite{KellPro} obtained a result $L^1$-analogue of  inequality (\ref{asympt-dens})  for piecewise expanding interval maps. 
\end{remark}

\begin{corollary}\label{Cor:WeakTop}
Let $\mu_\hg:=\phi_\hg m$ be the invariant measure for $F_\hg$, then for every neighbourhood $\mc U(\mu_\hg)$ w.r.t. the weak topology there is $\delta>0$ such that for every sequence $\bo\gamma\in B_\delta(\hg)^{\N}$, and for almost every $x\in \Omega_{\bo\gamma}$, there is $\bar n$ such that  
$$
\frac{1}{n}\sum_{i=0}^{n-1}\left(F_{\bo \gamma}^n\right)_*\delta_{x}\in \mc U(\mu_\hg),\quad \forall n>\bar n.
$$ 
\end{corollary}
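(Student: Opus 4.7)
The plan is to reduce the corollary to Theorem~\ref{thmA}(3) by exploiting the fact that a basic weak-topology neighbourhood of $\mu_{\hg}$ is controlled by \emph{finitely many} continuous test functions that can be approximated by elements of $V_\alpha$.

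First, I would write a basic weak neighbourhood of $\mu_{\hg}$ in the form
\[
\mathcal{U}_{\psi_1,\dots,\psi_k;\eta}(\mu_{\hg})=\left\{\nu:\left|\int\psi_j\,d\nu-\int\psi_j\,d\mu_{\hg}\right|<\eta,\ j=1,\dots,k\right\}
\]
for some $\psi_1,\dots,\psi_k\in C(\Omega)$ and $\eta>0$; it suffices to produce the required $\delta$ for neighbourhoods of this form. Using that $V_\alpha$ (defined in~(\ref{QuasHoldSpac})) contains, e.g., all Lipschitz functions and is therefore dense in $C(\Omega)$ with respect to $\|\cdot\|_\infty$, for each $j$ I would pick $\tilde\psi_j\in V_\alpha$ with $\|\psi_j-\tilde\psi_j\|_\infty<\eta/3$, and set $M=1+\max_j\|\tilde\psi_j\|_1<\infty$ (this constant depends only on $\mathcal U$ and $\Omega$).

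Next, I would apply Theorem~\ref{thmA}(3) with $\epsilon:=\eta/(3M)$ to the finitely many observables $\tilde\psi_1,\dots,\tilde\psi_k$. This produces a single $\delta>0$ such that for every $\bo\gamma\in B_\delta(\hg)^{\N}$ and each $j$ there is a full-measure set $X_{\bo\gamma,j}$ on which
\[
\int\tilde\psi_j\,d\mu_{\hg}-\eta/3\;\le\;\liminf_{n}\tfrac{1}{n}S_n(\tilde\psi_j)(x)\;\le\;\limsup_{n}\tfrac{1}{n}S_n(\tilde\psi_j)(x)\;\le\;\int\tilde\psi_j\,d\mu_{\hg}+\eta/3.
\]
Because $k$ is finite, $X_{\bo\gamma}:=\bigcap_{j=1}^k X_{\bo\gamma,j}$ still has full measure. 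For $x\in X_{\bo\gamma}$, the triangle inequality combines the uniform bound
$|\tfrac{1}{n}S_n(\psi_j)(x)-\tfrac{1}{n}S_n(\tilde\psi_j)(x)|\le\|\psi_j-\tilde\psi_j\|_\infty<\eta/3$
with the above Birkhoff-type estimate and with $|\int(\psi_j-\tilde\psi_j)d\mu_{\hg}|\le\|\psi_j-\tilde\psi_j\|_\infty<\eta/3$ (since $\mu_{\hg}$ is a probability) to give, for all sufficiently large $n$,
\[
\left|\tfrac{1}{n}S_n(\psi_j)(x)-\int\psi_j\,d\mu_{\hg}\right|<\eta,\qquad j=1,\dots,k.
\]
Since $\tfrac{1}{n}\sum_{i=0}^{n-1}(F_{\bo\gamma}^i)_*\delta_x$ integrated against $\psi_j$ equals $\tfrac{1}{n}S_n(\psi_j)(x)$, the empirical measure lies in $\mathcal U$, as required.

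\textbf{Main obstacle.} The least routine step is the approximation of general continuous observables by $V_\alpha$-observables: one must verify that the quasi-Hölder space defined via~(\ref{QuasHoldSpac}) indeed contains a uniformly dense subspace of $C(\Omega)$ and that bounded $\|\cdot\|_\infty$-approximations can be chosen with controlled $\|\cdot\|_1$ norm. Once this density is granted, the rest is a finite combination of the estimates already packaged in Theorem~\ref{thmA}(3). (Note also that the $F_{\bo\gamma}^n$ appearing inside the Cesàro sum in the corollary's statement should read $F_{\bo\gamma}^i$; with this interpretation the argument above applies verbatim.)
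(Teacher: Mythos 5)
Your argument is correct, but it takes a genuinely different route from the paper's proof. The paper works with the L\'evy--Prokhorov metric: it writes $\mu_{n,x}(A)$ as a Birkhoff average of $\chi_A$, invokes the law-of-large-numbers conclusion \eqref{Eq:LLNResult} together with Part~(2) for these indicators, and then handles the lack of uniformity over Borel sets $A$ by approximating $A$ with unions drawn from a fixed finite cover of $\Omega$ by small balls (whose indicators lie in $V_\alpha$). You instead use basic neighbourhoods $\mc U_{\psi_1,\dots,\psi_k;\eta}$ of the weak topology, approximate the finitely many continuous test functions $\psi_j$ uniformly by $V_\alpha$ observables $\tilde\psi_j$, and then apply Theorem~\ref{thmA}(3) directly; since only finitely many observables are involved, the intersection of the full-measure sets remains full. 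The step you flag as the ``main obstacle'' --- density of $V_\alpha$ in $(C(\Omega),\|\cdot\|_\infty)$ --- is indeed routine and correct: any compactly supported $C^1$ (or Lipschitz) function $\tilde\psi$ on $\R^N$ satisfies $\osc(\tilde\psi,B_\epsilon(x))\le 2\,\Lip(\tilde\psi)\,\epsilon$ so that $|\tilde\psi|_\alpha<\infty$, hence $\tilde\psi\in V_\alpha$, and restrictions of such functions are dense in $C(\Omega)$ by Stone--Weierstrass; one should be slightly careful that it is \emph{compactly supported} Lipschitz functions on $\R^N$ that land in $V_\alpha$, not arbitrary Lipschitz functions on $\Omega$ extended by zero, since the latter may fail $|\cdot|_\alpha<\infty$ if $\partial\Omega$ is irregular, but this does not affect your density argument. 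Your route is shorter and avoids the combinatorial bookkeeping with the finite ball cover; the paper's L\'evy--Prokhorov route has the minor advantage of producing the metric estimate \eqref{Eq:resultofTheo} explicitly, which can be reused elsewhere. You also correctly spot the typo $F^n_{\bo\gamma}$ (which should be $F^i_{\bo\gamma}$) in the corollary's display.
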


\subsection{Historical comments}\label{subsec:review}
Certain classes of dynamical systems possess good statistical properties under the effect of small random perturbations. For example, stochastic perturbations 
(i.e. chosen randomly according to some distribution) of expanding maps on a compact manifold have been thoroughly investigated (e.g. \cite{ViaSdds,AlvArau,ArauTah,BalYou}), as well as those of piecewise expanding maps on the interval (e.g. \cite{LasYor,Kel,Liv5}). Some recent studies deal with maps of the interval with neutral fixed points \cite{SebShe} and  with multi-dimensional piecewise maps of compact subsets in $\mathbb{R}^N$ \cite{BoyGor2,Cow, Saus}. In all these cases one can give a description of the statistical behaviour of the orbits of the randomly perturbed system via a stationary measure which is also close to some absolutely continuous invariant density for the unperturbed system. Key to these results is that perturbations are independent and identically distributed. However, recent developments requires the understanding of the asymptotic behaviour of dynamical systems under nonautonomous perturbations (\cite{Martin}). These perturbations are not independent, and the natural question concerns whether the statistics of the unperturbed and perturbed maps remains close in this more general setting. We provide an affirmative answer for two classes of dynamical systems: piecewise $C^{1+\nu}$ maps of a compact subset of $\mathbb{R}^N$ and (in the appendix) $C^{1+ \nu}$ expanding maps of a compact manifold.

Recent work (among others \cite{ConRau}, \cite{Aim}, and \cite{Aim2}) on nonautonomous composition of dynamical systems (sometimes also referred to as sequential dynamical systems) focused mainly on proving that the system exhibits memory loss, which roughly means that, given a finite precision, the orbits of sufficiently regular densities of states become indistinguishable after a finite number of iterations of the system. They
 show that if the density of the measures  $\mu,\nu$ belong to a suitable cone then one has {\em memory loss}:
$$\left\| \frac{d}{dm}(F^n_{\bo \gamma})_*\mu- \frac{d}{dm} (F^n_{\bo \gamma})_*\nu\right\|_1 \to 0$$
as $n\to \infty$. Memory loss under nonautonomous perturbations holds also for example for contracting systems, where all orbits tend to get indefinitely close thus losing track of their initial condition. It is easy to give examples where one has memory loss, where 
the densities $\frac{d}{dm}(F^n_{\bo \gamma})_*\mu$ strongly fluctuate. Part (2) of  Theorem A shows that the size of the fluctuations, in the above setting, only depend on the size of the perturbation. 

 A work similar in spirit to ours is \cite{KellPro} where the author derives a result on perturbed operators satisfying the hypotheses of an ergodic theorem by Ionescu-Tulcea and Marinescu (Theorem \ref{ThmIonTulcMar}, \cite{IonTul}) and applies it to one-dimensional piecewise expanding maps.  The same approach could be used to deal with the multidimensional case imposing conditions on the maps and their perturbations so that they fit into the hypotheses of the theorem. We follow a slightly different argument. Also in \cite{nandori2012central} the authors provide general conditions on transfer operators and on observables that ensure the validity of a central limit theorem for Birkhoff sums. In \cite{haydn2017almost} analogous conditions are provided for the almost sure invariance principle to hold. Other notable works are \cite{Mikko, MR2842105,stenlund2013dispersing} where a coupling technique is used to prove exponential memory loss for the evolution of densities in the smooth expanding case, in the one-dimensional piecewise expanding case, in the two-dimensional Anosov case and in Sinai billiards with slowly moving scatterers. In \cite{dobbs2016quasistatic}, the authors look at the nonautonomous composition of one-dimensional expanding maps which are changing very slowly in time. In this situation, they can describe Birkhoff sums and their fluctuations as diffusion processes in the adiabatic limit of very slow change. It is also worth noticing that in \cite{MR1938476}, the authors define Banach spaces that allow to give a complete picture of the spectral properties of Anosov systems. Combining this result with the result from \cite{KellPro}, one can obtain robustness of the evolution of densities. 
 While we were writing this paper we also came across \cite{Gup} where memory loss is discussed in the multidimensional piecewise expanding setting for strongly mixing systems, but using techniques closer to \cite{Liv5}. 

\subsection{Strategy of the Proof}\label{Sec:strat} 

 To prove Theorem A one could proceed in a similar way looking at the application of a nonautonomous sequence of perturbed transfer operators on some invariant cone of functions with finite dimeter. This is the approach followed in \cite{Gup} to prove memory loss for the nonautonomous composition. However, to proceed in this way, one needs to restrict to a composition of maps which have a strong mixing property on the whole phase space. This is required because otherwise the support of the invariant density for the unperturbed map and for an arbitrarily small perturbation might not coincide, making the diameter of any cone of functions containing both infinite with respect to the Hilbert metric.  

Since we want to treat the more general case we will use another technique that will work for any nonautonomous composition of perturbed versions of a piecewise expanding map with quasi-compact transfer operator having 1 as unique simple eigenvalue. The restriction of the transfer operators to a Banach subspace of $L^1$ made of quasi-H\"older functions, as defined in \cite{Saus}, satisfy a Lasota-Yorke inequality, which implies quasi-compactness and the presence of a spectral gap. Each transfer operator thus induce a splitting on the space $ V_\alpha$ that can be written as the direct sum of a one-dimensional eigenspace corresponding to the invariant density, and the subspace of quasi-H\"older functions with zero mean, and the successive application of the transfer operator on a probability density, makes it converge (with respect to the $L^1$ norm) exponentially fast to the associated invariant density. One then shows that the invariant density is stochastically stable, implying that sufficiently small perturbations of a given piecewise expanding map have invariant densities close in the $L^1$ norm. The main idea to prove the result is to follow the evolution of a probability density splitting at each iteration on the eigenspaces corresponding to the invariant densities, and to control the remainders that this projections introduce using consequences of Lakota-Yorke inequality and spectral properties. We treat all this in Section \ref{sec:prfthmB}, where we first introduce a precise definition of the maps we considered, followed by a preliminary section (Section \ref{prelB}) on spectral and perturbation results. We introduce perturbations and perturbative results in Section \ref{PertPiecCaseSec} to conclude with the proof of the main claim in Section \ref{SecProfMaiTheMul}.

An alternative way to prove Part (2) of Theorem A would have been showing that the setting taken from \cite{Saus} with the perturbation we introduce can be framed in the general theorem of \cite{KellPro}. To do so one should prove that the operators acting on the Banach subspace of $L^1$, $V_\alpha$ (or some possibly larger space), satisfy all the hypotheses of the abstract theorem. This has been done by Keller in the one-dimensional case considering the properties of the restriction of transfer operators of one-dimensional piecewise expanding maps to the space of functions with bounded variation.

We deduce information on the decay of correlations of random variables $\psi_i=\psi\circ F_{\gamma_n}\circ...\circ F_{\gamma_1}$ from the spectral properties. We use these properties to apply a strong law of large numbers (\cite{Walk2004}) for correlated random variables. This gives an expression for the accumulation points of the Birkhoff averages and enables us to obtain part (3) of Theorem A.

\bigskip

The appendix of this paper contains Theorem B which treats the  $C^{1+\nu}$-expanding setting using cones and the Hilbert metric. The outline of the proof is given in the appendix.

\section{Applications }\label{Sec:examples}

\paragraph{Dynamics on non-stationary networks.}  A typical application of Theorem A we have in mind is that of dynamics on networks of fixed size $n$
in which the topology of the network or the coupling strength is allowed to fluctuate  over time. 
For example consider the following dynamics:
\begin{equation}
{x}_i(t+1) = {f}({ x}_i(t)) + \alpha \sum_{j=1}^n A_{ij}(t) h_{ij}(t, x_j(t), x_i(t))\, , \,\, \, \,\mbox{ for } i=1,\dots,n, t\in \mathbb N
\label{md1}
\end{equation}
where $f\colon T^n\to T^n$ is a  expanding map on the $n$-dimensional torus,
$x_i(t)$ describe the state of the $i$-th node at time $t$,   $\alpha\in \mathbb R$ describes the overall coupling strength,
$A_{ij}(t)\in \{0,1\}$ is the adjacency matrix of the network (so describes whether or not node $i$ is  connected to node $j$) 
and $h_{ij}\colon {\mathbb N} \times T^n\times T^n\to T^n$ is a time dependent coupling map.
For the uncoupled case $\alpha=0$, the system has an invariant measure which is absolutely continuous w.r.t. the Lebesgue measure. 
Our results show that, provided $|\alpha|$ is small, a regularly distributed collection of initial points remains
almost uniformly distributed as time progresses, even when the topology of the network changes at each time step, as in Figure 1.
Independence on the time $t$ is often an unreasonable assumption. If a connection between $i$ and $j$ 
is broken at time $t$, it most likely will take time to be fixed. For this reason our theorem
would apply in this setting, whereas standard results on stochastic stability would not.

\begin{figure}[t]
   \centering
   \includegraphics[width=7cm]{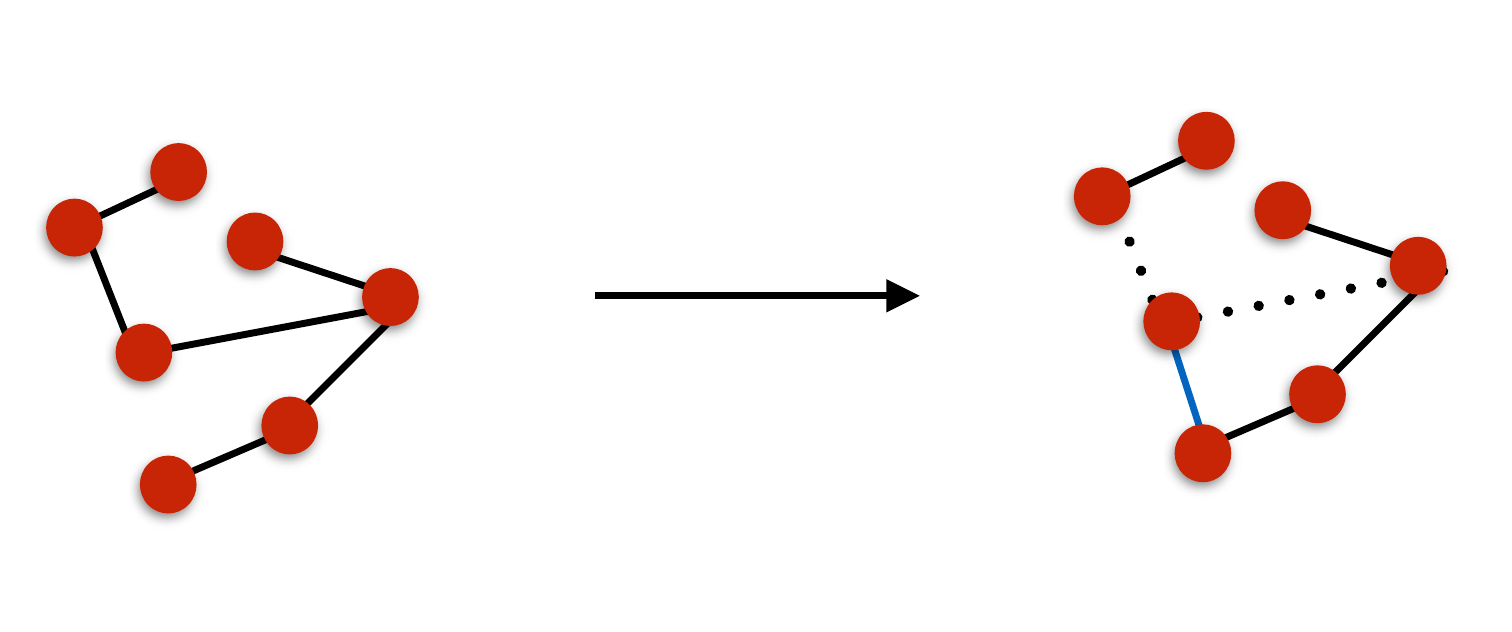} 
   \caption{Networks with changing topology over time. During the time step depicted above, two edges are deleted (dashed lines) while one is added. }
   \label{fig1}
\end{figure}

\paragraph{Stochastic Stability does not imply robustness of measures.}

In this example we show that a stochastically stable dynamical system, that is, a system which admits a stationary measure under 
random independent perturbations, can have 
 intricate behaviour under nonautonomous perturbations. For instance, take $\kappa\in (0,1)$ 
 and perturbations of the Pomeu-Manneville  map on the circle
\[
f_\gamma(x)=x+x^{1+\kappa}+\gamma x \mod 1
\]
or of the 
Liverani-Saussol-Vaienti circle map
\[
f_\gamma(x)= \left\{ \begin{array}{ll}  x(1+2^\kappa x^\kappa) +\gamma x& \mbox{ for }x\in [0,1/2), \\
2x-1 +\gamma x & \mbox{ for }x\in [1/2,1), \end{array} \right.
\]
see Fig \ref{Fig2} for an illustration. For $\gamma=0$ these maps have an absolutely continuous invariant measure, but for 
 $\gamma<0$ close to zero, $f_\gamma$ have a stable attracting fixed point. In spite of this, 
 Shen and van Strien proved that such maps are stochastically stable. 
 \begin{figure}[b]
  \begin{subfigure}{0.5\textwidth}
  \centering
  \includegraphics[scale=0.25]{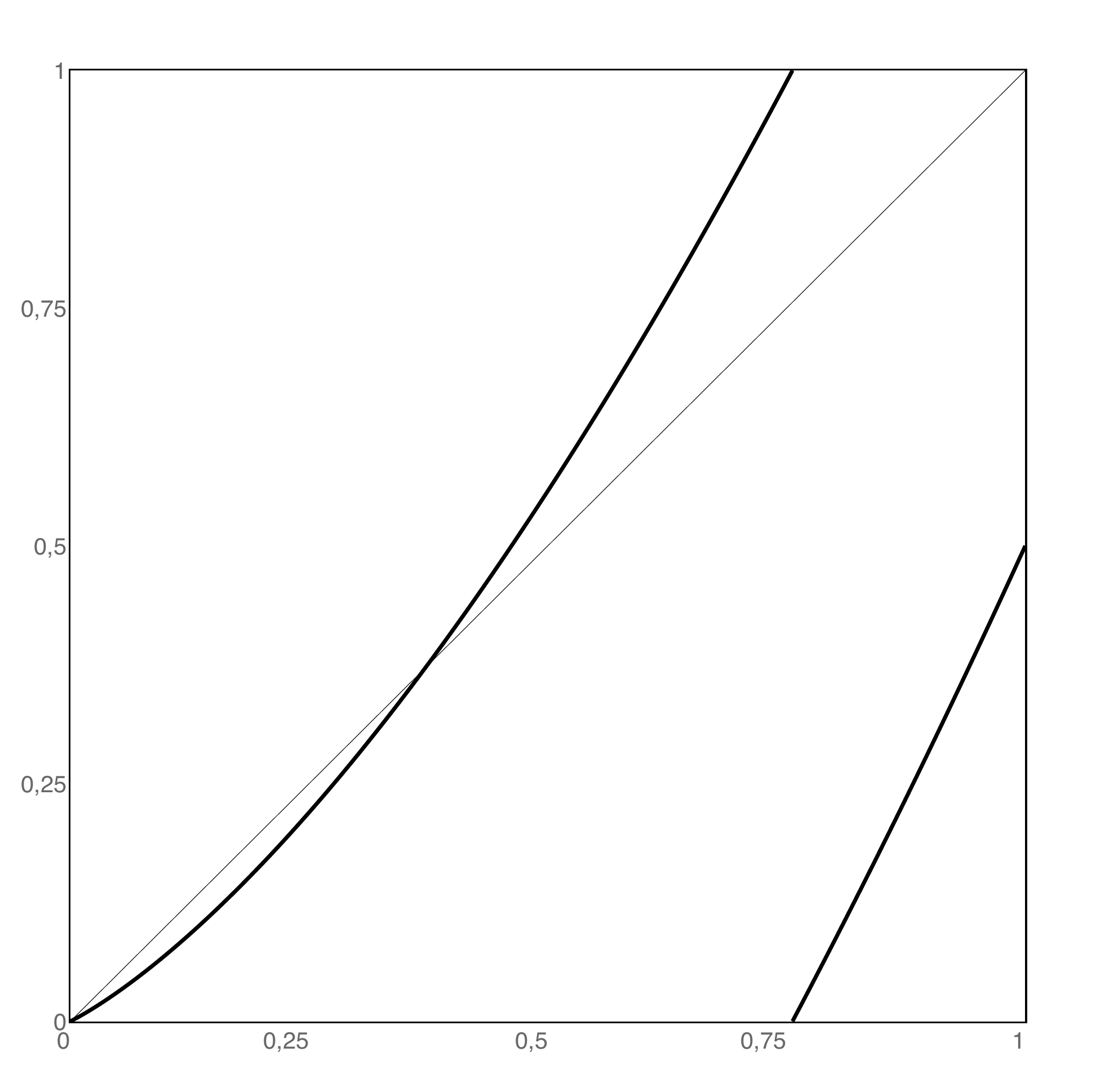} 
  \caption{}
  \label{fig:sfig1}
\end{subfigure}
  \begin{subfigure}{.5\textwidth}
  \centering
  \includegraphics[scale=0.26]{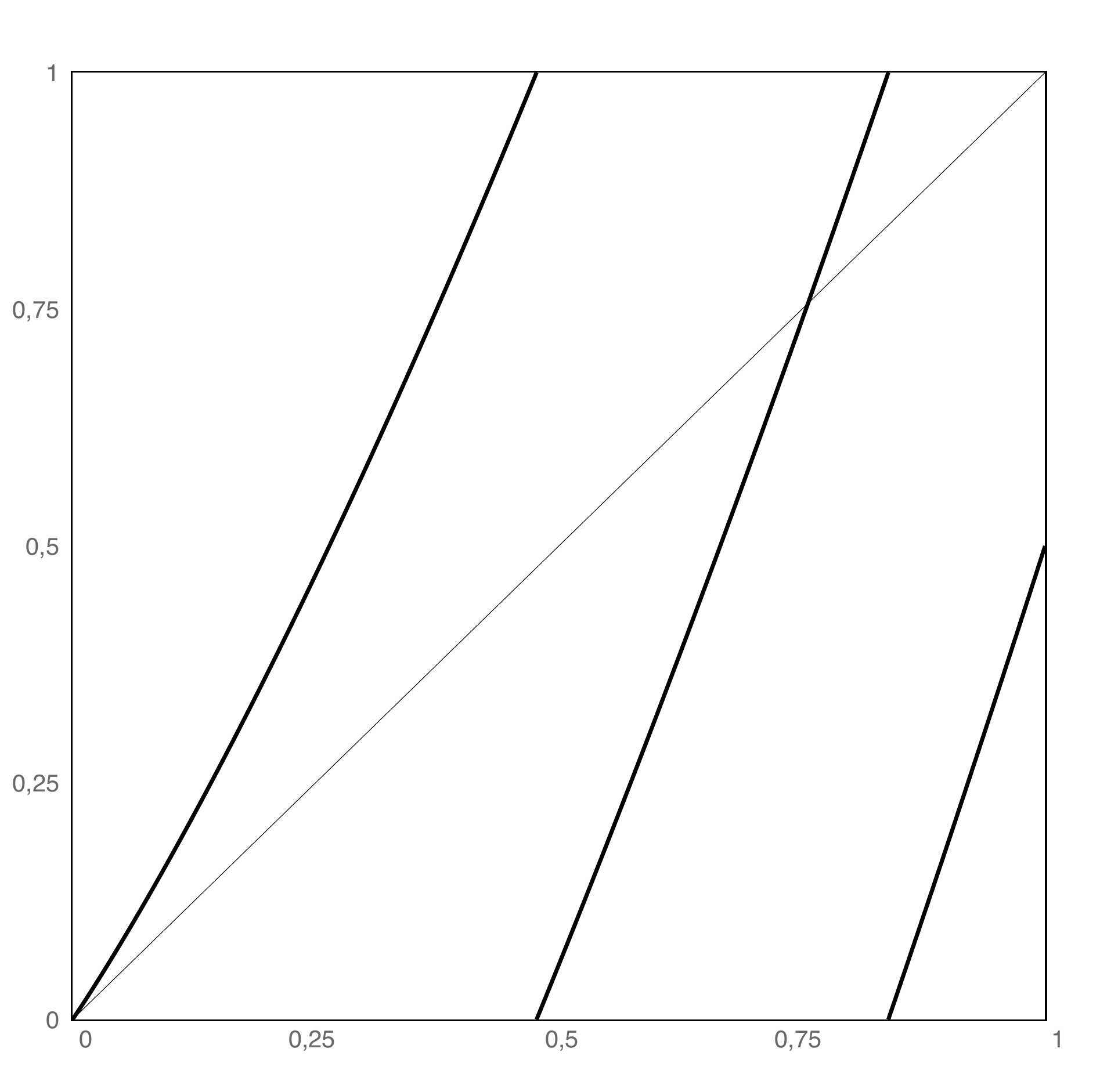} 
  \caption{}
  \label{fig:sfig2}
\end{subfigure}
   \caption{Perturbed Pomeau-Manneville maps of the circle (drawn on [0,1] modulus 1). (a) Graph of $f_{-\epsilon}$. 0 is an attracting fixed point. (b) Graph of $f_\epsilon$. The map is uniformly expanding.}
   \label{Fig2}
\end{figure}

 In other words, for a.e.  sequence $\bo \gamma=(\gamma_1,\gamma_2,\dots)$, chosen so that the $\gamma_i$ are i.i.d. uniform random variables in 
 an interval $[-\epsilon,\epsilon]$, the pushforward $f^n_{\bo \gamma}(\mu)$ converges to an absolutely continuous measure $\mu_\epsilon$
 and as $\epsilon>0$ tends to $0$  the density of this measure converges in the $L^1$ sense to the the density of $\mu$.
 Here it is crucial that the $\gamma_i$ are chosen i.i.d.  This is obvious because the pushforward of the Lebesgue measure $\lambda$
 under iterates of the map  $f_{-\epsilon}$ converge to the dirac measure at the attracting fixed point of $f_{-\epsilon}$.

One also can construct sequences $\bo \gamma$ so that $(f^n_{\bo \gamma})_*(\lambda)$ accumulates both 
to singular as well as to absolutely continuous invariant measures.   Indeed, given a sequence of integers
 $0=k_0<k_1<k_2<\dots$ define $\bo \gamma$ so that for all integers $i>0$ and $j\ge 0$
$$
\gamma_i= \left\{ \begin{array}{rl}  \epsilon &\mbox{ for }k_{2j}\,\,\,\,\,\,  < i \le  k_{2j+1}\\
-\epsilon &\mbox{ for }k_{2j+1}< i \le  k_{2j+2} 
\end{array}\right.$$

So $f^n_{\bo \gamma}$  is a composition of the expanding maps $f_\epsilon$ (the 2nd iterate of this map is expanding) 
and the $f_{-\epsilon}$ which has an attracting fixed point, see Fig. \ref{Fig2}. 
Provided we choose the sequence so that $k_{i+1}-k_i$ is sufficiently large compared to $k_i$ 
the sequence of measures $(f^n_{\bo \gamma})_*\lambda$ does not stay close to the absolutely continuous
invariant measure of $f$. Indeed, for a suitable choice of the sequence $k_i$, for $n=k_{2i}\to \infty$ the measure
$(f^n_{\bo \gamma})_*\lambda$ converges to the dirac measure at the fixed point at the attracting fixed point for $f_{\gamma_1}$,
while for $n=k_{2i+1}\to \infty$ the measure $(f^n_{\bo \gamma})_*\lambda$ converges to the absolutely continuous invariant measure
of $f_{\epsilon}$ (which is close to the absolutely continuous invariant measure of $f$ when $\epsilon>0$ is small).

{\bf Acknowledgments:} The authors thank G. Keller, C. Liverani, V. Baladi and D. Turaev for fruitful conversations related to this topic. This work was partially support by by the European Union's Seventh Framework Programme, ERC AdG Grant No. 339523 RGDD (SvS), FAPESP project 15/08958-4 (TP). We also acknowledge partial support by EU Marie-Curie IRSES Brazilian-European partnership in Dynamical Systems (FP7-PEOPLE-2012-IRSES 318999 BREUDS).

\section{Proof}\label{sec:prfthmB}

In this section we will consider piecewise expanding maps, and put ourselves in the setting of \cite{Saus}.

\subsubsection{Assumptions on the maps}\label{assumptions:maps}
 Suppose $\Omega\subset \R^N$ is a compact set with $\Omega\subset \clos(\inter\Omega)$, and $\Gamma$ is  a metric space that will serve as indexing set for the perturbations. Consider a collection of maps $\{F_\gamma\}_{\gamma\in\Gamma}$. $F_{\gamma}: \Omega\rightarrow\Omega$ for which exists $k\in \N$, $\gamma$ dependent partitions $\{U_{\gamma}^{(i)}\}_{1\leq i\leq k}$ of $\Omega$,  and  neighbourhoods $V^{(i)}$ of  $U_{\gamma}^{(i)}$ for $i=1,\dots,k$ and $\gamma\in \Gamma$, and maps $F_{\gamma}^{(i)}:V^{(i)} \rightarrow \Omega$ such that for every $\gamma\in\Gamma$
\begin{itemize}
\item[(ME1)]	${F_{\gamma}}|_{{U_{\gamma}^{(i)}}}={F_{\gamma}^{(i)}}|_{U_{\gamma}^{(i)}}$, and $B_{\epsilon_0}(F_{\gamma}(U_{\gamma}^{(i)}))\subset F_{\gamma}^{(i)}(V^{(i)})$ $\forall i=1,...,k$
\item[(ME2)]	$F_{\gamma}^{(i)}$ is $C^{1+\alpha}$ diffeomorphism meaning that $F_{\gamma}^{(i)}$ is a $C^1$ diffeomorphism, and the Jacobian  is uniformly H\"older, so for all $\epsilon<\epsilon_0$:
$$
\left|\det D_x {F_{\gamma}^{(i)}}^{-1}-\det D_y{ F_{\gamma}^{(i)}}^{-1}\right|\leq c\left |\det D_z{F_{\gamma}^{(i)}}^{-1} \right| \epsilon^\alpha, \quad x,y\in B_\epsilon(z)\cap F_{\gamma}^{(i)}(V^{(i)})
$$
\item[(ME3)] 	$m(\Omega \backslash \cup_i U_{\gamma}^{(i)})=0$

\item[(ME4)] the map $F_\gamma$ is expanding: exists $ s_\gamma \in(0,1)$ such that $\forall i\in\{1,...,k\}$ $\forall u,v\in F_{\gamma}^{(i)}(V^{(i)})$ with $d(u,v)<\epsilon_0$,  $d({F_{\gamma}^{(i)}}^{-1}(u),{F_{\gamma}^{(i)}}^{-1}(v))< s_\gamma d(u,v)$
\item[(ME5)]
$$
G^{(\gamma)}(x,\epsilon):=\sum_i\frac{m({F_{\gamma}^{(i)}}^{-1}(B_\epsilon(\partial F_{\gamma}^{(i)}U_{\gamma}^{(i)}))\cap B_{(1- s_\gamma)\epsilon})}{m(B_{(1-s_\gamma)\epsilon}(x))}
$$
$$
G^{(\gamma)}(\epsilon):=\sup_xG^{(\gamma)}(\epsilon,x)
$$
then 
$$
\sup_{\delta\leq\epsilon_0}\left[s_\gamma^\alpha+2\sup_{\epsilon\leq\delta}\frac{G^{(\gamma)}(\epsilon)}{\epsilon^\alpha}\delta^\alpha\right]<\rho<1
$$
where $\rho$ does not depend on $\gamma\in\Gamma$.
\item[(ME6)] there is $\hg\in\Gamma$ such that the transfer operator of $F_{\hg}$ has a unique eigenfunction $\phi_{\hg}$ in $V_\alpha$, a Banach subspace of $L^1$ defined in \eqref{QuasHoldSpac} below.
\end{itemize}
\begin{remark}
For what concerns hypothesis (ME6), examples in the one-dimensional case of conditions that imply uniqueness of the eigenvalues, can be found in several references (for example \cite{ViaSdds}, \cite{LasMac}). This condition is usually implied by the uniqueness of the absolutely continuous invariant measure plus a mixing condition.
\end{remark}

\begin{remark}
Although condition (ME5) requires uniformity of the upper bound on the function describing the complexity of the partition in relation to the expansion of the maps. The condition might seem rather artificial, but in \cite{Gup} is proven that considering maps satisfying (ME1)-(ME4) if their partitions sets have piecewise $C^2$ boundaries with uniformly bounded $C^2$-norm and, these hypersurfaces are in one-to-one correspondence and have small Hausdorff distance then (ME5) is automatically satisfied whenever is satisfied by one of the maps.
\end{remark}

 We first report some preliminary results from which we derive spectral properties of the transfer operators of any function $F$ from the collection and its perturbations when they are restricted to the Banach subspace $V_\alpha\subset L^1(\R^N)$ of quasi-H\"older functions (defined in Section \ref{QuasHoldFun}). For a definition of the transfer operator and a discussion of some of its properties see Section \ref{Sec:prel}. Under assumption (ME6), for each perturbed operator, the space of quasi-H\"older functions splits into the direct sum of invariant subspaces: one corresponds to the invariant direction while the restriction of the transfer operator to the other is a contraction. We conclude by showing how the presence of such a spectral gap implies the result.

\subsection{Preliminaries}\label{prelB}
 
 In \cite{Gup}, the authors prove memory loss looking at the action of the maps on an invariant cone of functions by generalising to multidimensional maps a construction introduced in \cite{Liv5} for the one-dimensional case. This procedure is the analogous in the piecewise case of what we present for $C^{1+\nu}$ maps in the appendix, but it requires that all the maps have some mixing property on the whole phase space which is always the case in the regular case, while it has to be assumed as an extra hypothesis in the piecewise case.  Our argument allow us to drop this hypothesis, although it works only when considering composition of small perturbations of a given map.

We exploit a well known property exhibited by the transfer operator of some maps: quasi-compactness (\cite{BalYou,Kel,Bal,KellPro,Liv5,You1,You2}). For a definition of the transfer operator and a discussion of some of its properties see Section \ref{Sec:TranOper} in the appendix.
 \subsubsection{Stochastic Stability: Stationary Case}\label{SecSmoStatStab}

Now fix a Borel probability measure $\nu$ on the metric space $\Gamma$ (endowed with the $\sigma$-algebra of Borel sets), and consider the asymptotic behaviour of the random trajectories $\{x_i\}_{i\in\N}$ with 
$$
x_i:=F_{\gamma_i}\circ...\circ F_{\gamma_1}(x_0)\quad \forall i\in\N
$$
where the $\{\gamma_i\}_{i\in\N}$ are sampled independently from $\Gamma$ with distribution given by $\nu$. This random dynamical system is equivalent to the skew product dynamical system
\begin{align*}
\mc F: M\times \Gamma^\N&\rightarrow M\times \Gamma^\N\\
(x,\bo \gamma)&\rightarrow (f_{\gamma_1}(x),\sigma(\bo \gamma))
\end{align*}
on $(M\times \Gamma^\N,m\otimes \nu^{\otimes\N})$, where $\bo \gamma\in \Gamma^\N$, and $\sigma$ is the left-sided shift. 
Denoting with $\mc L_\gamma$ the transfer operator associated to $F_\gamma$, one can define the \textit{average transfer operator} 
$$
\hat{\mc L}_\nu \phi:=\int_\Gamma \mc L_\gamma\phi d\nu(\gamma).
$$
The density $\phi_\nu$ satisfying $\mc L_\nu\phi_\nu=\phi_\nu$ is called stationary density. Whenever $\nu=\delta_\gamma$ for some $\Gamma$, $\hat{ \mc L}_\nu=\mc L_\gamma$ and $\phi_\nu$ (if it exists) is the invariant density under the map $F_\gamma$.

To prove Part (i) of Theorem \ref{thmA}, we will use the spectral properties of $\mc L_{\hg}$ and $\mc L_\nu$ in a way similar to what has already been shown for one-dimensional piecewise maps in \cite{ViaSdds}.
\subsubsection{Spectral Theorems for the Transfer Operators}
It is often very useful to restrict the action of the transfer operator to some Banach space contained in $L^1$. It has been shown in many cases how such a restriction have nice spectral properties that imply, among others, existence of invariant absolutely continuous measures and exponential mixing of correlations between observables. Suppose that $V\subset L^1(M)$, and  $(V,\|\cdot\|_V)$ is a Banch space. A  theorem by Ionescu-Tulcea and Marinescu give a criterion to establish the spectral properties of $\mc L$. We report here this theorem in the case where the ambient space is $L^1(M)$

\begin{theorem}[\cite{IonTul}]\label{ThmIonTulcMar}
Let $(V,\|\cdot\|_V)$ be a Banach closed subspace of $L^1(M)$ such that if $\{\phi_n\}_{n\in\N}\subset V$, $\|\phi_n\|_V\leq K$ is such that $\phi_n\rightarrow \phi$ in $L^1$, then $\phi\in V$, and $\|\phi\|_V\leq K$. Let $\mc C(V)$ be the class of linear bounded operators with image in $V$ satisfying
\begin{itemize}
\item[(1)] there exists $H$ s.t. $|\mc P^n|_V\leq  H$  $\forall n\in\N$
\item [(2)]there exists $0<r<1$ and $R>0$ such that 
\begin{equation}\label{GenLasYor}
\|\mc P\phi\|_V\leq r\|\phi\|_V+R\|\phi\|_1
\end{equation}
\item[(3)]  $\mc P(B)$ is compact in $L^1$ for every bounded $B$ in $(V,\|\cdot\|_V)$.
\end{itemize}
Then every $\mc P\in \mc C(V)$ has only a finite number of eigenvalues $\{c_1,...,c_p\}$ of modulus 1 with finite dimensional eigenspaces $\{X_1,...,X_p\}$, and 
$$
\mc P=\sum_{i=1}^pc_i  P_i+ P_0
$$
where, if $\{\pi^{(i)}\}_{i=\{1,...p\}}$, $\pi^{(0)}$ are projections relative to the splitting 
$$
V=\bigoplus_{i=1}^pX_i\oplus X_0
$$
$P_i:=\mc P\circ \pi^{(i)}$, and $\|P_0^n\|_V=O(q^n)$ with $q\in(0,1)$.
\end{theorem}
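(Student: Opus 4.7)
My plan is to follow the classical functional-analytic route based on quasi-compactness. The starting point is to iterate the Lasota--Yorke inequality (2): assuming, as is standard for transfer operators in this context, that $\mc P$ is a contraction on $L^1$ (so that $\|\mc P\phi\|_1\le\|\phi\|_1$), a straightforward induction gives, for every $\phi\in V$ and $n\ge 1$,
$$\|\mc P^n\phi\|_V\;\le\; r^n\|\phi\|_V\;+\;\frac{R}{1-r}\,\|\phi\|_1.$$
If $\mc P\phi=c\phi$ with $|c|=1$ and $\phi\in V$, substituting and letting $n\to\infty$ forces $\|\phi\|_V\le\tfrac{R}{1-r}\|\phi\|_1$, so every peripheral eigenfunction has $V$-norm controlled by its $L^1$-norm.

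Next, let $E$ denote the unit $L^1$-ball in the sum of all peripheral eigenspaces. By the previous step $E$ is bounded in $V$, so hypothesis (3) makes $\mc P(E)$ relatively compact in $L^1$; since each $\phi\in E$ satisfies $\mc P\phi=c\phi$ with $|c|=1$, $E$ itself is relatively compact in $L^1$. The closedness assumption on $V$ under bounded $L^1$-limits, combined with the uniform $V$-bound from the first step, upgrades this to relative compactness of $E$ in the $V$-norm. The Riesz lemma then rules out $E$ being infinite-dimensional: otherwise one could extract unit vectors with pairwise $V$-distance bounded below, contradicting compactness. Hence the peripheral spectrum consists of only finitely many $c_1,\dots,c_p$ with finite-dimensional eigenspaces $X_1,\dots,X_p$.

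Finally, to produce the splitting $\mc P=\sum_i c_iP_i+P_0$ I would introduce the Riesz spectral projections
$$P_i\;:=\;\frac{1}{2\pi i}\oint_{\Gamma_i}(z-\mc P)^{-1}\,dz$$
along small loops $\Gamma_i$ isolating each $c_i$; these are bounded on $V$, commute with $\mc P$, and setting $X_0:=\bigcap_i\ker P_i$ yields an invariant direct-sum decomposition $V=X_1\oplus\cdots\oplus X_p\oplus X_0$. The step I expect to be the main obstacle is the quasi-compactness estimate $\|P_0^n\|_V=O(q^n)$ for some $q\in(0,1)$. For this I would invoke a Hennion/Nussbaum-type argument: the Lasota--Yorke inequality (2) together with the $L^1$-compactness (3) bounds the essential spectral radius of $\mc P$ on $V$ by $r$, so the spectrum of $\mc P$ outside the closed disk of radius $r$ consists of finitely many eigenvalues of finite multiplicity. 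Enlarging $X_0$, if necessary, to absorb any isolated eigenvalues in the annulus $r<|z|<1$, one forces the spectral radius of $\mc P|_{X_0}$ to be strictly less than $1$, and the uniform iterate bound (1) then upgrades this spectral estimate to the advertised operator-norm decay $\|P_0^n\|_V=O(q^n)$.
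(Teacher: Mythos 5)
First, a point of reference: the paper does \emph{not} prove this theorem --- it is quoted from \cite{IonTul} and invoked as a black box --- so there is no ``paper proof'' to compare your attempt against. Your overall strategy (iterate the Lasota--Yorke inequality, bound the essential spectral radius via a Hennion/Nussbaum argument using hypothesis (3), then take Riesz spectral projections) is the standard modern route to this quasi-compactness result and is the right skeleton.

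That said, there are two genuine gaps. The paragraph on finite-dimensionality of the peripheral part is circular: the estimate $\|\phi\|_V\le\frac{R}{1-r}\|\phi\|_1$ is derived only for a single eigenfunction $\mc P\phi=c\phi$, because it relies on $\|\mc P^n\phi\|_V=\|\phi\|_V$; for a general element of the \emph{sum} of peripheral eigenspaces the powers $\mc P^n$ act as a direct sum of distinct rotations and you lose this identity, so the claim that $E$ is $V$-bounded (and hence $L^1$-precompact) does not follow from what you proved. This paragraph is in fact redundant once the Hennion-type bound on the essential spectral radius is in place, since that already yields finitely many peripheral eigenvalues of finite algebraic multiplicity, so the cleanest fix is to delete it. The second and more substantive gap is semi-simplicity: the Riesz projections a priori give \emph{generalized} eigenspaces, i.e.\ $\mc P P_i=c_iP_i+N_i$ with $N_i$ nilpotent, and with nontrivial $N_i$ the decomposition $\mc P=\sum_ic_iP_i+P_0$ of the statement fails and $\|P_0^n\|_V$ would grow polynomially rather than decay. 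The role of hypothesis (1) is precisely to kill the nilpotent parts ($\|\mc P^n\|_V\le H$ is incompatible with the polynomial growth $\binom{n}{j}|c_i|^{n-j}$ a Jordan block would produce), not to ``upgrade'' a spectral radius bound on $X_0$ to a norm bound --- the latter already follows from the spectral radius formula once $\mc P|_{X_0}$ has spectral radius $<1$. You should make this use of (1) explicit. Finally, note that your iteration of the Lasota--Yorke inequality silently assumes $\mc P$ is an $L^1$-contraction (or at least uniformly $L^1$-bounded); this is true for the transfer operators the theorem is applied to but is not among the stated hypotheses, so it should be flagged or added.
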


The theorem can be used to understand the behaviour of the transfer operator for a variety of maps. Most of the requirements are automatically satisfied by the transfer operator, and the only thing that requires an additional proof is inequality \eqref{GenLasYor} often referred as a Lasota-Yorke type of inequality. For such an inequality to hold, the Banach space $(V,\|\cdot\|_V)$ must be chosen carefully.

In the following we need a result that deals with perturbed transfer operators. This is treated in various references and presented in different formulations. Among others we cite \cite{KelLiv, KellPro, ViaSdds, Bal}. We report the statement that can be found in \cite{ViaSdds} for transfer operators associated to piecewise-expanding maps, and that can be generalised without any extra effort to the above  setting.

\begin{theorem}[\cite{ViaSdds}]\label{PertOper}
Suppose $(V,\|\cdot\|_V)$ is a closed Banach space which is a subspace of $L^1(M)$. Let $C>0$, $q<1$, $\lambda<1$, and $ \mc P_\epsilon:V\rightarrow V$, be a family of linear operators satisfying:
\begin{itemize}
\item $\int  \mc P_\epsilon \phi dm=\int \phi dm$, and $\phi\geq 0$ implies $ \mc P_\epsilon \phi\geq 0$;
\item $\| \mc P^n_\epsilon \phi\|_V\leq C\lambda^n\|\phi\|_V+C\|\phi\|_1$;
\end{itemize}
for every $n\geq 1$, $\epsilon\geq 0$, and $\phi\in V$. Suppose that

\begin{itemize}
\item for $n\geq 1$ there is $\epsilon(n)$ so that for all $\phi\in V$ and all $\epsilon\in(0,\epsilon(n))$
\begin{equation}\label{AppBound}
\| \mc P_0^n\phi- \mc P^n_\epsilon \phi\|_1\leq C\lambda^n\|\phi\|_V
\end{equation}
\item $\spec(\mc P_0)=\{1\}\cup\Sigma_0$, where 1 is a simple eigenvalue and $\Sigma_0\subset\{z\in\C: |z|\leq q\}$.
\end{itemize}

Fix $\tilde q\in(\max\{\sqrt{q},\sqrt \lambda\},1)$. Then, for any small enough $\epsilon>0$, $\spec( \mc P_\epsilon)=\{1\}\cup \Sigma_\epsilon$, where 1 is a simple eigenvalue and $\Sigma_\epsilon\subset\{z\in\C:|z|\leq\tilde q\}$.

\end{theorem}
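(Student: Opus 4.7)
The plan is to carry out a Keller--Liverani style perturbation of spectral projections: combine the uniform Lasota--Yorke bound (giving uniform quasi-compactness) with the mixed-norm convergence~\eqref{AppBound} (giving continuity of spectral projections), and use the Markov property $\int\mc P_\epsilon\phi\,dm=\int\phi\,dm$ to pin the dominant eigenvalue at~$1$. As a first step, the Lasota--Yorke inequality, combined with the compactness built into the ambient Banach pair (cf.\ hypothesis~(3) of Theorem~\ref{ThmIonTulcMar}), yields via Hennion's theorem that the essential spectral radius of $\mc P_\epsilon$ on $V$ is at most~$\lambda$, uniformly in~$\epsilon$. Hence $\spec(\mc P_\epsilon)\cap\{|z|>\tilde q\}$ consists of finitely many isolated eigenvalues of finite algebraic multiplicity; the Markov condition forces constants to be fixed by the dual operator, so $1\in\spec(\mc P_\epsilon)$ for every~$\epsilon$.

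Next, choose a small circle $\mc C_1$ centered at~$1$ enclosing no other point of $\spec(\mc P_0)$, together with a circle $\mc C_2$ of radius $\tilde q'\in(\tilde q,1)$ enclosing only~$1$ among $\spec(\mc P_0)$. The crucial analytic estimate is that $(zI-\mc P_\epsilon)^{-1}$ is uniformly bounded as an operator $V\to L^1$ for $z\in\mc C_1\cup\mc C_2$ and sufficiently small~$\epsilon$. I would prove this by combining the telescoping identity
\[
(zI-\mc P_\epsilon)^{-1} \;=\; z^{-n}\sum_{k=0}^{n-1}z^{n-1-k}\mc P_\epsilon^k \;+\; z^{-n}(zI-\mc P_\epsilon)^{-1}\mc P_\epsilon^n
\]
with the second resolvent identity
\[
(zI-\mc P_\epsilon)^{-1}-(zI-\mc P_0)^{-1}\;=\;(zI-\mc P_\epsilon)^{-1}(\mc P_\epsilon-\mc P_0)(zI-\mc P_0)^{-1},
\]
and feeding~\eqref{AppBound} into the tail $z^{-n}\mc P_\epsilon^n$ for $n$ large enough that $\lambda^n$ dominates $|z|^{-n}$ on the contours, and $\epsilon<\epsilon(n)$.

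Finally, define $\Pi_\epsilon:=\tfrac{1}{2\pi i}\oint_{\mc C_1}(zI-\mc P_\epsilon)^{-1}\,dz$ and $\widetilde\Pi_\epsilon:=\tfrac{1}{2\pi i}\oint_{\mc C_2}(zI-\mc P_\epsilon)^{-1}\,dz$. Push~\eqref{AppBound} through the resolvent identity, together with the uniform resolvent bound just obtained, to conclude that $\|\Pi_\epsilon-\Pi_0\|_{V\to L^1}\to 0$ and $\|\widetilde\Pi_\epsilon-\widetilde\Pi_0\|_{V\to L^1}\to 0$. Since $\widetilde\Pi_0=\Pi_0$ has rank one (as $1$ is simple for $\mc P_0$), a standard perturbation-of-finite-rank-projections argument forces $\operatorname{rank}\Pi_\epsilon\le 1$ for small~$\epsilon$, while the Markov property gives the reverse inequality at~$1$; hence $1$ is a simple eigenvalue of $\mc P_\epsilon$, and the coincidence $\widetilde\Pi_\epsilon=\Pi_\epsilon$ rules out any other spectrum of $\mc P_\epsilon$ in $\{\tilde q<|z|\le 1\}$. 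The hard part is clearly the uniform resolvent bound: hypothesis~\eqref{AppBound} provides approximation only for $\epsilon$ small depending on the iterate~$n$, so one must balance the finite-$n$ head (controlled by the Lasota--Yorke bound in $V$-norm) against the tail $z^{-n}\mc P_\epsilon^n$ (controlled by~\eqref{AppBound} in $L^1$-norm) uniformly in~$z$ on the contours.
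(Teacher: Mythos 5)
The paper offers no proof of this result; it is imported verbatim from~\cite{ViaSdds}, so I assess your argument against what the stated hypotheses actually provide.

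Your plan is the Keller--Liverani scheme: uniform Lasota--Yorke plus a mixed-norm approximation yields uniform quasi-compactness, a uniform resolvent bound on suitable contours, and then convergence of the Riesz projections in the $V\to L^1$ norm, from which the simplicity and localisation of the peripheral spectrum follow. That scheme is sound when the triple norm $\|\mc P_\epsilon-\mc P_0\|_{V\to L^1}$ tends to zero as $\epsilon\to 0$. But hypothesis~\eqref{AppBound} does not give that: with $n=1$ it only gives $\|\mc P_\epsilon\phi-\mc P_0\phi\|_1\le C\lambda\|\phi\|_V$ for $\epsilon<\epsilon(1)$, a bound that is \emph{bounded} but does not shrink as $\epsilon\to 0$. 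Consequently the step $\|\Pi_\epsilon-\Pi_0\|_{V\to L^1}\to 0$ via the second resolvent identity does not go through, because the factor $(\mc P_\epsilon-\mc P_0)$ is never small; what \eqref{AppBound} makes small is $\mc P_\epsilon^n-\mc P_0^n$ for $n$ large, with $\epsilon$ then restricted to $(0,\epsilon(n))$. You flag the resolvent estimate as ``the hard part'' and propose a head/tail balance, but the tail $z^{-n}(zI-\mc P_\epsilon)^{-1}\mc P_\epsilon^n$ contains the very resolvent being bounded, and on the inner contour $|z|=\tilde q'<1$ the head $\sum_{k<n}|z|^{-1-k}\|\mc P_\epsilon^k\phi\|_1$ already grows like $|z|^{-n}$; nothing in the sketch shows these balance. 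That is a genuine gap, not a detail.

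The shape of the hypotheses --- the peculiar thresholds $\tilde q>\max\{\sqrt q,\sqrt\lambda\}$ and the ``for each $n$ there is $\epsilon(n)$'' form of \eqref{AppBound} --- is tailored to a more elementary argument that avoids contour integrals and projection comparisons entirely: fix $n$ large first, then shrink $\epsilon$. Suppose $\mc P_\epsilon\phi=z\phi$ with $\|\phi\|_V=1$ and $|z|>\tilde q$. The Markov property gives $(1-z)\int\phi\,dm=0$, so either $z=1$ or $\int\phi\,dm=0$, i.e.\ $\phi\in X_0$. In the latter case, combine the $L^1$-estimate
\[
|z|^n\|\phi\|_1=\|\mc P_\epsilon^n\phi\|_1\le\|\mc P_\epsilon^n\phi-\mc P_0^n\phi\|_1+\|\mc P_0^n\phi\|_1\le C\lambda^n+H(q')^n
\]
for any $q'\in(q,\tilde q^2)$ (using \eqref{AppBound} and the spectral gap of $\mc P_0$ on $X_0$), with the $V$-norm estimate
\[
1=\|\phi\|_V=|z|^{-n}\|\mc P_\epsilon^n\phi\|_V\le|z|^{-n}\bigl(C\lambda^n+C\|\phi\|_1\bigr),
\]
which gives $\|\phi\|_1\ge(|z|^n-C\lambda^n)/C$. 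Multiplying the two bounds produces $|z|^{2n}\lesssim\lambda^n+(q')^n$, which is impossible for $n$ large since $|z|^2>\tilde q^2>\max\{\lambda,q'\}$; fixing such an $n$ and then $\epsilon<\epsilon(n)$ eliminates every eigenvalue of modulus exceeding $\tilde q$ other than $1$. Simplicity of $1$ and the essential-spectral-radius bound (from Lasota--Yorke plus the compact $V\hookrightarrow L^1$ embedding implicit in the Ionescu-Tulcea--Marinescu framework) are handled in the same spirit. This direct eigenvector argument, not the Keller--Liverani projection machinery, is what the threshold $\max\{\sqrt q,\sqrt\lambda\}$ and the $n$-dependent $\epsilon(n)$ are built for, and it is almost certainly the proof in~\cite{ViaSdds}.
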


The above theorem states that, under some hypotheses, when dealing with a quasi-compact transfer operator with 1 as unique simple eigenvalue,  small perturbations do not jeopardise quasi-compactness.

\subsubsection{Quasi-H\"older spaces $V_{\alpha}$ }\label{QuasHoldFun}

In this section we report the definition of quasi-H\"older space as presented in \cite{Saus}. This is the Banach space on which we restrict the action of the Perron-Frobenius operator associated to $F$.
Given $\phi\in L^1(\R^N)$ and $S$ a Borel subset of $\R^N$, define
$$
\osc(\phi, S):=\essup_S\phi-\esinf_S\phi.
$$

For all $\epsilon>0$ and $\phi\in L^1(\R^N)$, the map $x\mapsto \osc(\phi,B_\epsilon(x))$ is measurable (in particular is lower semi-continuous). Given $\alpha\in(0,1)$, $|f|_\alpha$ is defined (finite or infinite) as
$$
|\phi|_\alpha:=\sup_{0<\epsilon\leq\epsilon_0} \epsilon^{-\alpha}\int_{\R^N}\osc(\phi,B_\epsilon(x))dm(x),
$$
and  $V_\alpha$ is
\begin{equation}\label{QuasHoldSpac}
V_\alpha:=\{\phi\in L^1(\R^N):\quad |\phi|_\alpha<\infty\}.
\end{equation}

The space $V_\alpha$, endowed with the norm $\|\cdot\|_\alpha:=|\cdot|_\alpha+\|\cdot\|_1$, where $\|\cdot\|_1$ is the $L^1$ norm, is a Banach space.
 
\subsubsection{Spectral Properties of $\mc L$ on $V_{\alpha}$}

The transfer operator $\mc L$ associated to $F$ satisfying (ME1)-(ME5), fulfils a Lasota-Yorke type of inequality. 
\begin{proposition}[\cite{Saus}]\label{LasYorMultPiecIneq}
Suppose $F$ satisfies (ME1)-(ME5). If $\epsilon_0$ is small enough, there exists $\eta\in(0,1)$ and $C<0$ such that for all $\phi\in V_\alpha$ 
$$
\mc L\phi\in V_\alpha\mbox{ and }|\mc L \phi|_\alpha\leq \eta|\phi|_\alpha+C\int_{\R^N}|\phi| dm.
$$
\end{proposition}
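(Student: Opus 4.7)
The plan is to follow the classical Lasota--Yorke strategy adapted to the quasi-H\"older seminorm, as pioneered by Saussol for multidimensional piecewise expanding maps. Writing the transfer operator explicitly as
$$\mc L \phi(x) = \sum_i g_i(x)\,\phi(y_i(x))\,\1_{F^{(i)}(U^{(i)})}(x),$$
where $y_i = (F^{(i)})^{-1}$ and $g_i(x) = |\det D y_i(x)|$, I would need to bound $\int \osc(\mc L\phi, B_\epsilon(x))\,dm(x)$ for each $\epsilon\in(0,\epsilon_0]$ by $\eta\epsilon^\alpha |\phi|_\alpha + C\epsilon^\alpha\|\phi\|_1$. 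By subadditivity of oscillation, it suffices to bound each summand $\osc(h_i,B_\epsilon(x))$ with $h_i = g_i(\phi\circ y_i)\1_{F^{(i)}(U^{(i)})}$, and then split the integral into an \emph{interior} part, where $B_\epsilon(x)\subset F^{(i)}(U^{(i)})$, and a \emph{boundary} part, where $B_\epsilon(x)$ meets $\partial F^{(i)}(U^{(i)})$.

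\emph{Interior estimate.} On balls that avoid the boundary, I would use the Leibniz-type bound
$$\osc(h_i,B_\epsilon(x)) \le (\essup_{B_\epsilon(x)} g_i)\,\osc(\phi\circ y_i,B_\epsilon(x)) + \osc(g_i,B_\epsilon(x))\,\essup_{B_\epsilon(x)}|\phi\circ y_i|.$$
Condition (ME4) gives $y_i(B_\epsilon(x))\subset B_{s_\gamma\epsilon}(y_i(x))$, so $\osc(\phi\circ y_i,B_\epsilon(x))\le \osc(\phi,B_{s_\gamma\epsilon}(y_i(x)))$, while the H\"older control (ME2) ensures both that $\essup_{B_\epsilon(x)} g_i \le g_i(x)(1+c\epsilon^\alpha)$ and that $\osc(g_i,B_\epsilon(x))\le c\,g_i(x)\,\epsilon^\alpha$. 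After the change of variables $y=y_i(x)$, the factor $g_i(x)$ is absorbed by $dm(x)=g_i^{-1}(x)\,dm(y)$, and summing over $i$ using (ME3) yields an interior contribution bounded by $(1+c\epsilon^\alpha)(s_\gamma\epsilon)^\alpha|\phi|_\alpha$ (plus a lower-order term times $\|\phi\|_1$ coming from the second summand, after estimating $\essup|\phi|$ as in the next step).

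\emph{Boundary estimate.} Here $h_i$ is zero on part of $B_\epsilon(x)$, so $\osc(h_i,B_\epsilon(x))\le 2\essup_{B_\epsilon(x)\cap F^{(i)}(U^{(i)})}(g_i\,|\phi\circ y_i|)$. Choosing a ball $B'$ of radius $(1-s_\gamma)\epsilon$ inscribed in $B_\epsilon(x)\cap F^{(i)}(U^{(i)})$ and using the elementary inequality $\essup_{B'} f \le \frac{1}{m(B')}\int_{B'} f\,dm + \osc(f,B_\epsilon(x))$ with $f = g_i\,|\phi\circ y_i|$, the essential supremum is split into a mean term (controlled by $\|\phi\|_1$ after change of variables) and an additional oscillation term on a larger ball. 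Integrating $\osc(h_i,B_\epsilon(x))$ over the set of boundary-crossing centres $x$, and recognising that this set is exactly $F^{(i)}(V^{(i)})\cap \text{(nbhd of }\partial F^{(i)}U^{(i)})$, the combinatorial factor is precisely the quantity $G^{(\gamma)}(\epsilon)/m(B_{(1-s_\gamma)\epsilon})$ appearing in (ME5). After changing variables branch by branch, the boundary contribution is bounded by $2 G^{(\gamma)}(\epsilon)\int\osc(\phi,B_{s_\gamma\epsilon}(y))\,dm(y) + C\|\phi\|_1$.

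\emph{Conclusion and main obstacle.} Collecting the two pieces and dividing by $\epsilon^\alpha$ gives
$$\epsilon^{-\alpha}\int \osc(\mc L\phi,B_\epsilon(x))\,dm(x) \le \left[s_\gamma^\alpha + 2\,\frac{G^{(\gamma)}(\epsilon)}{\epsilon^\alpha}\epsilon^\alpha\right]|\phi|_\alpha + C\|\phi\|_1,$$
and assumption (ME5) makes the bracket uniformly bounded by $\rho<1$ for all $\epsilon\le\epsilon_0$, yielding the Lasota--Yorke inequality with $\eta=\rho$. The main obstacle is the boundary step: the boundaries $\partial F^{(i)}U^{(i)}$ may be fractal, so one cannot use classical smooth boundary measure estimates, and the whole purpose of the complexity function $G^{(\gamma)}$ in (ME5) is to package the relevant geometric information. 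Handling the essential supremum via a mean plus an oscillation on an enlarged ball, and then summing these extra oscillation terms back into $|\phi|_\alpha$ without spoiling the contraction constant $\rho$, is the delicate point that requires the $(1-s_\gamma)\epsilon$ scaling in the definition of $G^{(\gamma)}$ to be chosen exactly right.
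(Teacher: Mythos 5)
The paper cites Proposition~\ref{LasYorMultPiecIneq} from \cite{Saus} and does not reprove it, so there is no internal proof to compare against; the relevant comparison is with Saussol's original argument. Your outline reconstructs the skeleton of that argument correctly: decompose $\mc L\phi$ branch by branch, split $\int\osc(\mc L\phi,B_\epsilon(x))\,dm(x)$ into an interior and a boundary part, use a Leibniz-type bound for the oscillation of a product, absorb the branch Jacobian by a change of variables, invoke the contraction from (ME4), the H\"older modulus of $\det D{F^{(i)}}^{-1}$ from (ME2), and the complexity condition (ME5) for the boundary. You also correctly identify the boundary step as the crux.

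That said, the boundary estimate as you wrote it would not close. First, a ball $B'$ of radius $(1-s_\gamma)\epsilon$ cannot always be inscribed in $B_\epsilon(x)\cap F^{(i)}(U^{(i)})$ — for $x$ near $\partial F^{(i)}U^{(i)}$ there may be no such ball on the image side. Saussol's mean-plus-oscillation bound for the essential supremum is carried out on the preimage side, after pulling back by $y_i=(F^{(i)})^{-1}$: there $y_i(B_\epsilon(x))\subset B_{s_\gamma\epsilon}(y_i(x))$, and one always has a ball of radius $(1-s_\gamma)\epsilon$ centred at $y_i(x)$ available, which is exactly why the numerator of $G^{(\gamma)}(x,\epsilon)$ involves $B_{(1-s_\gamma)\epsilon}$. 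Second, the combinatorial quantity appearing after the change of variables is $G^{(\gamma)}(\epsilon)$ itself, not $G^{(\gamma)}(\epsilon)/m(B_{(1-s_\gamma)\epsilon})$; the definition of $G^{(\gamma)}$ already contains the normalisation by $m(B_{(1-s_\gamma)\epsilon}(x))$, so dividing again is a double count. Third, your final bracket collapses to $s_\gamma^\alpha+2G^{(\gamma)}(\epsilon)$ and so does not explain the double-supremum form of (ME5), which is needed to control the boundary term uniformly over all scales below the working radius; relatedly, the H\"older-Jacobian correction enters (as the paper itself records later, $\eta(\gamma)=(1+cs_\gamma^\alpha\epsilon_0^\alpha)\rho$) as a multiplicative factor on all of $\rho$, not only on the $s_\gamma^\alpha$ piece. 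These are fixable details, not a wrong route, but as stated the boundary estimate would fail without moving it to the preimage side.
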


Theorem \ref{ThmIonTulcMar} by Ionescu-Tulcea and Marinescu gives the spectral properties of $\mc L$. Whenever the transfer operator $\mc L$ has $\{1\}$ as unique eigenvalue which is also simple, we obtain the following splitting

\begin{proposition}
Let $F$ be a map that satisfies (ME1)-(ME6) then:
$$
\spec(\mc L)=\{1\}\cup \Sigma_0
$$
where 1 is a simple eigenvalue and $\Sigma_0$ is a disc of radius $q<1$, and 
$$
V_\alpha=\R \phi_0\oplus X_0.
$$
\end{proposition}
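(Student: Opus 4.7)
The plan is to apply the Ionescu-Tulcea--Marinescu theorem (Theorem~\ref{ThmIonTulcMar}) to $\mc L$ acting on $V_\alpha\subset L^1(\R^N)$, and then use hypothesis (ME6) to identify the peripheral spectrum.

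First I would verify the three abstract requirements (1)--(3) of Theorem~\ref{ThmIonTulcMar}. Requirement (1), the uniform bound $\|\mc L^n\|_V\leq H$, follows by iterating the Lasota--Yorke inequality from Proposition~\ref{LasYorMultPiecIneq}: one obtains $|\mc L^n\phi|_\alpha\leq \eta^n|\phi|_\alpha+\frac{C}{1-\eta}\|\phi\|_1$, and since $\mc L$ preserves the $L^1$ norm on nonnegative functions (and hence $\|\mc L\phi\|_1\leq\|\phi\|_1$ in general), the $\|\cdot\|_\alpha$-norms of the iterates are uniformly bounded by a constant depending only on $\|\phi\|_\alpha$. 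Requirement (2) is precisely Proposition~\ref{LasYorMultPiecIneq} with $r=\eta$ and $R=C$. Requirement (3), compactness of $\mc L(B)$ in $L^1$ for every $\|\cdot\|_\alpha$-bounded set $B$, reduces to the compactness of the embedding $V_\alpha\hookrightarrow L^1$ on a compact ambient set, which is a classical fact established in \cite{Saus} and follows from the uniform control $\int\osc(\phi,B_\epsilon(x))\,dm\leq \epsilon^\alpha|\phi|_\alpha$ via a standard Kolmogorov--Riesz-type argument. The closedness condition on $V_\alpha$ stated in the hypotheses of Theorem~\ref{ThmIonTulcMar} is immediate from the lower semicontinuity of $\osc(\phi,B_\epsilon(\cdot))$ under $L^1$-convergence along bounded sequences.

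Given these, Theorem~\ref{ThmIonTulcMar} yields a decomposition
$$
\mc L = \sum_{i=1}^{p} c_i P_i + P_0,
$$
where $\{c_1,\dots,c_p\}$ are the eigenvalues of modulus one, each with finite-dimensional eigenspace $X_i$, the $P_i$ are the associated spectral projectors, $V_\alpha = \bigoplus_{i=1}^p X_i \oplus X_0$ is invariant under $\mc L$, and $\|P_0^n\|_\alpha = O(q^n)$ for some $q\in(0,1)$.

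Finally, I would invoke (ME6) to pin down the peripheral spectrum. Since $\mc L$ preserves positivity and integrals, any eigenvalue $c_i$ of modulus one has an eigenfunction whose modulus is a fixed point of $\mc L$, hence gives a nonzero invariant density in $V_\alpha$. The uniqueness assumption (ME6) therefore forces $p=1$, $c_1=1$, and $\dim X_1 = 1$, with $X_1 = \R\phi_{\hg}$ (write $\phi_0 = \phi_{\hg}$). Consequently $\spec(\mc L) = \{1\}\cup\Sigma_0$ with $\Sigma_0 \subset \{|z|\leq q\}$ and $V_\alpha = \R\phi_0 \oplus X_0$, as claimed.

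The only real obstacle is the justification that the only peripheral eigenvalue is $1$ and that it is simple; arguing that a modulus-one eigenvalue must be $1$ uses the positivity-preserving and mass-preserving character of $\mc L$ (so the absolute value of any peripheral eigenfunction is a genuine invariant density), and then (ME6) does the rest. All other steps are straightforward applications of results already assembled in this section.
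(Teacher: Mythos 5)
Your proposal is correct and follows essentially the same route as the paper, which simply invokes the Ionescu--Tulcea--Marinescu theorem together with the Lasota--Yorke inequality of Proposition~\ref{LasYorMultPiecIneq} and then reads off the peripheral spectrum from (ME6) without supplying further detail. The extra steps you spell out — verifying the three ITM hypotheses via iteration of Lasota--Yorke, compactness of the embedding $V_\alpha\hookrightarrow L^1$, and the standard positivity/mass-preservation argument showing that a modulus-one eigenfunction produces an invariant density so that uniqueness pins down the peripheral eigenvalue — are exactly the implicit content the paper is relying on and appealing to \cite{Saus}, so this is the same proof with the details filled in.
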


\subsection{Perturbations}\label{PertPiecCaseSec}

In \cite{Cow}, the author treats the problem of stochastic stability for the invariant density for multidimensional piecewise expanding maps with piecewise smooth boundaries of the partitions. We address the same problem in the setting presented above, which makes it natural to consider perturbations of the system $F:\Omega\rightarrow\Omega$ with different regularity partitions as long as the branches admit an extension to the same neighbourhood. 

\subsubsection{Continuity assumptions}\label{assumptions:continuity}

 Given any $\hg\in\Gamma$, we say that the collection $\{F_{\gamma}\}_{\gamma\in\Gamma}$  is continuous at $\hg$ if for every $\epsilon>0$ there is a $\delta>0$ such that the $\delta$-ball $B_\delta(\hg)$ has the following properties:
\begin{itemize}
\item[(CM1)]
for each $\gamma_1,\gamma_2\in B_\delta(\hg)$, 
the $C^1$ distance between $F^{(i)}_{\gamma_1}$ and  $F^{(i)}_{\gamma_2}$ is at most $\epsilon$;
\item[(CM2)]for every $\gamma\in B_{\delta}(\hg)$ and $1\leq i \leq k$, $m(U_\gamma^{(i)}\triangle U^{(i)})\leq\epsilon$, where $\triangle$ stands for the symmetric difference.
\end{itemize}

\subsubsection{Notation}

Define the set of multi-indices $\mc I_n:=\{1,...,k\}^n$. For $\bo i\in \mc I_n$ and $\bo \gamma=(\gamma_1,...,\gamma_n)\in\Gamma^n$  call 
$$
F^n_{\bo \gamma}(x):=F_{\gamma_n}... F_{\gamma_1}(x) \quad\mbox{and}\quad F^{(\bo i)}_{\bo \gamma}(x):=F^{(i_n)}_{\gamma_n}... F^{i_1}_{\gamma_1}(x), 
$$ 
whenever is well defined.
For $\bo \gamma\in\Gamma^n$, define $\mc L^n_{\bo \gamma}:=\mc L_{\gamma_n}...\mc L_{\gamma_1}$.
For all $n\in\N$ and $\bo \gamma\in\Gamma^n$, let us denote with $\{U^{(\bo i)}_{\bo \gamma}\}_{\bo i \in \mc I_n}$ the partition of $\Omega$ modulo a negligible subset such that, if $x\in U^{(\bo i)}_{\bo \gamma}$, then $x\in U^{i_1}_{\gamma_1} $, and $F^{(i_j)}_{\gamma_j}... F^{(i_1)}_{\gamma_1}(x)\in U^{(i_{j+1})}_{\gamma_{j+1}}$ for $1\leq j<n$.  Notice that $F^{(\bo i)}_{\bo \gamma}$ is well defined on  $U^{(\bo i)}_{\bo \gamma}$, and its restriction equals $F_{\bo \gamma}$.

We fix $\hg\in\Gamma$. From now on, $F_{\hg}$ represent the 'unperturbed' map, and will be denoted as $F$ and $\mc L_{\hg}$ will be denoted as $\mc L$.
\begin{remark}
Some of the $U^{(\bo i)}_{\bo \gamma}$ might be empty, or of measure zero.
\end{remark}

\subsubsection{Perturbation Results}

We now prove that if one randomly compose sufficiently small perturbations, then the average transfer operator of the perturbed map satisfies a uniform Lasota-Yorke type of inequality.  
\begin{lemma}\label{PErtLasYorPie}
Let $\{F_{\gamma}\}_{\gamma\in\Gamma}$ be perturbations of $F:=F_\hg$ as in (ME1)-(ME6) continuous at $\hg$ as in (CM1)-(CM2). Then, there exists $C>0$ and $\tilde\eta\in(0,1)$, and $\Gamma'\subset\Gamma$ neighbourhood of $\hg$ such that, for every probability measure $\nu$ with $\supp\nu\subset \Gamma'$ and all $\phi\in V_\alpha$
$$
|\hat{\mc L_\nu} \phi|_\alpha\leq \tilde \eta |\phi|_\alpha+C\int|\phi|dm
$$
\end{lemma}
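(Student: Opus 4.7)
The plan is to combine two ingredients: a Lasota-Yorke inequality for each individual $\mc L_\gamma$ with constants that are uniform in $\gamma$ on a small enough neighbourhood $\Gamma'$ of $\hg$, and the subadditivity of the oscillation functional under integration against a probability measure. Together these immediately give the stated bound for $\hat{\mc L}_\nu$.

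\textbf{Step 1: Uniform individual Lasota-Yorke.} First I would apply Proposition \ref{LasYorMultPiecIneq} (Saussol) to each $F_\gamma$ with $\gamma$ in a neighbourhood $\Gamma'$ of $\hg$, obtaining $|\mc L_\gamma \phi|_\alpha \leq \eta_\gamma |\phi|_\alpha + C_\gamma \|\phi\|_1$. The rate $\eta_\gamma$ comes from the bracket in (ME5), which is \emph{already} assumed bounded by the $\gamma$-independent constant $\rho<1$, so one can take $\eta := \rho$ uniformly. The constant $C_\gamma$ on the other hand depends on the expansion rate $s_\gamma$, the Hölder constant $c$ of the Jacobian appearing in (ME2), the number $k$ of branches, and the boundary complexity function $G^{(\gamma)}$. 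Using continuity (CM1), the $C^1$-closeness of $F^{(i)}_\gamma$ to $F^{(i)}_{\hg}$ makes $s_\gamma$ and the Jacobian Hölder constants vary continuously, while (CM2) controls the boundary terms through the small symmetric difference of the partitions. Shrinking $\Gamma'$ if necessary, one obtains a uniform $C>0$ such that
$$
|\mc L_\gamma \phi|_\alpha \leq \eta|\phi|_\alpha + C\|\phi\|_1 \qquad \forall\, \gamma\in\Gamma',\ \phi\in V_\alpha.
$$

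\textbf{Step 2: Averaging.} The key elementary observation is that for any Borel set $S\subset\R^N$ and any probability measure $\nu$,
$$
\osc\!\left(\int \mc L_\gamma\phi\, d\nu(\gamma),\, S\right) \leq \int \osc(\mc L_\gamma\phi,\,S)\, d\nu(\gamma),
$$
which follows from $\essup_S \int f_\gamma\, d\nu \leq \int \essup_S f_\gamma\, d\nu$ and the analogous inequality for $\esinf$. Integrating this over $x$ with respect to $m$, taking $S = B_\epsilon(x)$, and using Fubini, one gets
$$
\int \osc(\hat{\mc L}_\nu\phi,\,B_\epsilon(x))\, dm(x) \leq \int \int \osc(\mc L_\gamma\phi,\,B_\epsilon(x))\, dm(x)\, d\nu(\gamma).
$$
Multiplying by $\epsilon^{-\alpha}$ and taking $\sup_{\epsilon\leq\epsilon_0}$ (the sup commutes into the $\nu$-integral as an inequality), this is bounded by $\int |\mc L_\gamma\phi|_\alpha\, d\nu(\gamma)$. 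Applying the uniform bound of Step 1 under the integral yields
$$
|\hat{\mc L}_\nu\phi|_\alpha \leq \int(\eta|\phi|_\alpha + C\|\phi\|_1)\, d\nu(\gamma) = \eta|\phi|_\alpha + C\|\phi\|_1,
$$
so one can take $\tilde\eta := \eta = \rho$.

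\textbf{Main obstacle.} The only nontrivial point is the uniformity of $C_\gamma$ in Step 1. The rate $\eta$ is handed to us by assumption (ME5), but the additive constant $C$ in Saussol's Lasota-Yorke estimate depends on several auxiliary quantities hidden in the proof of Proposition \ref{LasYorMultPiecIneq}. Verifying that the continuity hypotheses (CM1)-(CM2) really control all of these requires a careful pass through Saussol's argument to track which moduli of continuity and which boundary estimates enter into the constant. Once this bookkeeping is in place the averaging step is essentially automatic from the subadditivity of $\osc$.
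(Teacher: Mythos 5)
Your Step 2 (passing the oscillation inside the $\nu$-integral and then inside the $\sup_\epsilon$) is exactly the paper's argument, and that part is correct. But Step 1 has the logic of the uniformity backward, and this is where the actual content of the lemma lies.

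You claim that the Lasota--Yorke rate can be taken to be $\eta=\rho$, because (ME5) already bounds the bracket uniformly. That is not what Saussol's Proposition \ref{LasYorMultPiecIneq} gives. The rate produced by that proposition is
$$
\eta(\gamma)=(1+c\,s_\gamma^\alpha\epsilon_0^\alpha)\,\rho,
$$
where the prefactor $(1+c\,s_\gamma^\alpha\epsilon_0^\alpha)>1$ comes from the H\"older modulus of the inverse Jacobian in (ME2). So the bound in (ME5) does \emph{not} automatically place $\eta(\gamma)$ below $1$: one must first shrink $\epsilon_0$ so that $\eta(\hg)=(1+c\,s_{\hg}^\alpha\epsilon_0^\alpha)\rho<1$, and then use (CM1) (which gives $s_\gamma\to s_{\hg}$) to pick a neighbourhood $\Gamma'$ on which $\eta(\gamma)<\tilde\eta<1$ uniformly. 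Conversely, the additive constant $C(\gamma)$, which you flag as the ``main obstacle'' requiring a careful pass through Saussol's proof, is the \emph{easy} part: Saussol already proves it is bounded uniformly over the whole family, so no further bookkeeping is needed. In short, the nontrivial point you anticipated is in the rate, not the constant, and your claim $\eta=\rho$ would make the lemma essentially tautological when it is not.
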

\begin{proof}
The proof of this lemma follows from Proposition \ref{LasYorMultPiecIneq}. Since $F_{\gamma}$ satisfies (ME1)-(ME5), Proposition \ref{LasYorMultPiecIneq} implies that it satisfies a Lasota-Yorke inequality with $\tilde\eta(\gamma)\in(0,1)$, and $C(\gamma)>0$. As proved in \cite{Saus}, $C(\gamma)$ has a uniform bound for every $\gamma\in\Gamma$ and
$$
\eta(\gamma)=(1+cs_\gamma^\alpha\epsilon_0^\alpha)\rho.
$$  
Choosing $\epsilon_0$ so that $\eta(\hg)<1$, since $s_\gamma\rightarrow s_{\hg}$ for $\gamma\rightarrow\hg$ by (CM1), one can pick $\Gamma'\subset\Gamma$ a neighbourhood of $\hg$ such that $\eta(\gamma)<\tilde\eta<1$ for all $\gamma\in\Gamma'$. This implies that $\forall \gamma\in\Gamma'$
$$
|\mc L_\gamma \phi|_\alpha\leq \tilde \eta |\phi|_\alpha+C\int|\phi|dm.
$$
Now,
\begin{align*}
|\hat{\mc L}_\nu\phi|_\alpha&=\left|\int_{\Gamma}\mc L_\gamma\phi d\nu(\gamma)\right|_\alpha=\sup_{0<\epsilon\leq\epsilon_0}\epsilon^{-\alpha}\int_{\R^N}\osc\left(\int_{\Gamma}\mc L_\gamma\phi d\nu(\gamma),B_\epsilon(x)\right)dm(x)\\
\end{align*}
and from the definition of oscillation, since $\essup$ and $\esinf$ are respectively a convex and concave function on the essentially bounded functions,
\begin{align*}
|\hat{\mc L}_\nu\phi|_\alpha&\leq \sup_{0<\epsilon\leq\epsilon_0}\epsilon^{-\alpha}\int_{R^N}\int_{\Gamma} \osc\left(\mc L_\gamma\phi,B_\epsilon(x)\right)d\nu(\gamma)dm(x)\\
&\leq  \sup_{0<\epsilon\leq\epsilon_0}\int_{\Gamma}\epsilon^{-\alpha}\int_{R^N} \osc\left(\mc L_\gamma\phi,B_\epsilon(x)\right)dm(x)d\nu(\gamma)\\
&\leq \int_\Gamma|\mc L_\gamma\phi|_\alpha d\nu(\gamma)\\
&\leq \tilde \eta |\phi|_\alpha+C\int|\phi|dm.
\end{align*}

\end{proof}
Since $\tilde \eta<1$ one immediately gets 
\begin{lemma} \label{BounNormPertActMult}
\begin{itemize}
\item[(1)]\label{PertLasYorMult}
for all $n\in\N$ and $\bo \gamma\in (\Gamma')^n$ 
$$
|\mc L_{\gamma_n}...\mc L_{\gamma_1}\phi|_\alpha\leq\tilde\eta^{n}|\phi|_\alpha+\frac{C}{1-\tilde \eta}\int|\phi|dm
$$
which is thus uniformly bounded on $n$,
\item[(2)] for all $\epsilon>0$ and all densities $\phi\in V_\alpha$, there is a $\tilde n(\phi,\epsilon)\in\N$ such that 
$$
|\mc L_{\gamma_n}...\mc L_{\gamma_1}\phi|_\alpha\leq \frac{C}{1-\tilde \eta}+\epsilon
$$
for all $n>\tilde n(\phi,\epsilon)$.
\end{itemize}
\end{lemma}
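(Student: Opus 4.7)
The plan is to derive both parts as straightforward corollaries of the uniform single-step Lasota--Yorke inequality established in Lemma \ref{PErtLasYorPie}. The proof of that lemma already shows as an intermediate step that every individual operator $\mc L_\gamma$ with $\gamma \in \Gamma'$ satisfies
\[
|\mc L_\gamma \phi|_\alpha \le \tilde\eta\, |\phi|_\alpha + C\, \|\phi\|_1,
\]
with $\tilde\eta$ and $C$ independent of $\gamma \in \Gamma'$ (this is the pointwise-in-$\gamma$ bound that is subsequently integrated against $\nu$). The only additional input needed is the standard fact that each transfer operator is an $L^1$-contraction: since $\mc L_\gamma$ is positive and preserves integrals, one has $|\mc L_\gamma \phi|\le \mc L_\gamma|\phi|$ and hence $\|\mc L_\gamma \phi\|_1 \le \|\phi\|_1$.

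For part (1), I proceed by induction on $n$, the base case $n=1$ being the inequality displayed above. For the inductive step, set $\psi := \mc L_{\gamma_{n-1}}\cdots \mc L_{\gamma_1}\phi$ and apply the one-step inequality to $\mc L_{\gamma_n}\psi$, obtaining
\[
|\mc L_{\gamma_n}\cdots \mc L_{\gamma_1}\phi|_\alpha \le \tilde\eta\,|\psi|_\alpha + C\,\|\psi\|_1.
\]
The $L^1$-contraction property gives $\|\psi\|_1 \le \|\phi\|_1$, and the induction hypothesis bounds $|\psi|_\alpha$ by $\tilde\eta^{\,n-1}|\phi|_\alpha + (C/(1-\tilde\eta))\|\phi\|_1$. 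Substituting, collecting terms and summing a geometric series yields
\[
|\mc L_{\gamma_n}\cdots \mc L_{\gamma_1}\phi|_\alpha \le \tilde\eta^{\,n}|\phi|_\alpha + C\sum_{k=0}^{n-1}\tilde\eta^{\,k}\|\phi\|_1 \le \tilde\eta^{\,n}|\phi|_\alpha + \frac{C}{1-\tilde\eta}\|\phi\|_1,
\]
which, after identifying $\|\phi\|_1$ with $\int |\phi|\,dm$, is exactly the stated estimate.

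Part (2) is then immediate. Since $\phi$ is a density, $\|\phi\|_1 = 1$, and since $\tilde\eta \in (0,1)$ I may choose any integer $\tilde n(\phi,\epsilon)$ exceeding $\log(\epsilon/|\phi|_\alpha)/\log\tilde\eta$; for $n > \tilde n(\phi,\epsilon)$ one then has $\tilde\eta^{\,n}|\phi|_\alpha < \epsilon$, and inserting this into the bound from part (1) produces $(C/(1-\tilde\eta)) + \epsilon$. I do not anticipate any real obstacle: the whole argument is the standard consequence of a Lasota--Yorke inequality, and the only care required is the verification that the constants $\tilde\eta$ and $C$ are genuinely uniform over $\Gamma'$, which is precisely what the proof of the preceding lemma delivers.
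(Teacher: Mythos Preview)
Your proposal is correct and is essentially the same as the paper's approach: the paper does not give an explicit proof at all, merely prefacing the lemma with ``Since $\tilde\eta<1$ one immediately gets,'' and your argument is precisely the standard induction-plus-geometric-series computation that this phrase abbreviates.
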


We now prove a perturbation estimate crucial in determining  the spectral properties of the perturbed transfer operators using similar estimates to the procedure in \cite{ViaSdds}.
\begin{proposition}\label{MultiPertRes}
Let $\{F_{\gamma}\}_{\gamma\in\Gamma}$ be perturbations of $F:=F_\hg$ as in (ME1)-(ME6) continuous at $\hg$ as in (CM1)-(CM2), then there exists $0<\tilde s\leq 1$, and $\tilde C>0$ such that for all $n\in \N$ there is $\delta>0$, satisfying
\begin{equation}\label{ExpUppBoun}
\|\mc L^n_{\bo \gamma}\phi-\mc L^n \phi\|_1\leq \tilde C\tilde s^n\|\phi\|_\alpha,\quad \forall \phi\in V_\alpha
\end{equation}
for all $\bo \gamma\in B_\delta(\hg)^n$.
\end{proposition}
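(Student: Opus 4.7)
The natural approach is a telescoping argument combined with a one-step perturbation bound. Write
\[
\mc L^n_{\bo\gamma}\phi - \mc L^n\phi \;=\; \sum_{j=1}^{n}\mc L_{\gamma_n}\cdots\mc L_{\gamma_{j+1}}\,(\mc L_{\gamma_j}-\mc L)\,\mc L^{j-1}\phi,
\]
with the convention that an empty product of operators is the identity. Because every transfer operator is an $L^1$-contraction ($\|\mc L_\gamma\psi\|_1\leq \|\psi\|_1$), applying $\|\cdot\|_1$ to both sides yields
\[
\|\mc L^n_{\bo\gamma}\phi - \mc L^n\phi\|_1 \;\leq\; \sum_{j=1}^{n}\bigl\|(\mc L_{\gamma_j}-\mc L)\mc L^{j-1}\phi\bigr\|_1.
\]
The proof then reduces to two ingredients: \textbf{(a)} a uniform bound $\|\mc L^{j-1}\phi\|_\alpha\leq C_1\|\phi\|_\alpha$ for all $j\geq 1$; and \textbf{(b)} a one-step perturbation estimate
\[
\|(\mc L_\gamma-\mc L)\psi\|_1 \;\leq\; \omega(\delta)\,\|\psi\|_\alpha,\qquad \gamma\in B_\delta(\hg),
\]
with $\omega(\delta)\to 0$ as $\delta\to 0$.

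Step \textbf{(a)} follows directly by iterating the Lasota--Yorke inequality of Proposition \ref{LasYorMultPiecIneq} for the unperturbed map: $|\mc L^{j-1}\phi|_\alpha \leq \eta^{j-1}|\phi|_\alpha + \tfrac{C}{1-\eta}\|\phi\|_1$, combined with $\|\mc L^{j-1}\phi\|_1\leq \|\phi\|_1$. Step \textbf{(b)} is the main technical point. Writing
\[
\mc L_\gamma\psi(y) \;=\; \sum_i \psi\bigl(F_\gamma^{(i),-1}(y)\bigr)\,\bigl|\det DF_\gamma^{(i),-1}(y)\bigr|\,\1_{F_\gamma(U_\gamma^{(i)})}(y),
\]
and analogously for $\mc L\psi$, the pointwise difference splits branch by branch into three kinds of contributions. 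First, a Jacobian-difference term $\psi\circ F_\gamma^{(i),-1}\bigl(\det DF_\gamma^{(i),-1}-\det DF^{(i),-1}\bigr)$, controlled by $K\delta\,\|\psi\|_1$ using the $C^1$-closeness (CM1). Second, a composition-difference term $\bigl(\psi\circ F_\gamma^{(i),-1} - \psi\circ F^{(i),-1}\bigr)\det DF^{(i),-1}$, which after change of variables becomes $\int_{V^{(i)}}\osc(\psi,B_{K\delta}(x))\,dm(x) \leq (K\delta)^{\alpha}\,|\psi|_\alpha$, precisely the place where the quasi-Hölder seminorm is indispensable. Third, a boundary term supported on $F_\gamma(U_\gamma^{(i)})\triangle F(U^{(i)})$, whose measure is bounded by $K\delta$ via (CM2) and the bi-Lipschitz comparison of $F$ and $F_\gamma$, and over which $\int|\psi|\,dm$ is controlled by a standard integral estimate on $V_\alpha$ functions (cf.\ \cite{Saus}). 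Summing over the finitely many branches yields $\omega(\delta)\to 0$ as $\delta\to 0$.

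Assembling \textbf{(a)} and \textbf{(b)} gives $\|\mc L^n_{\bo\gamma}\phi - \mc L^n\phi\|_1 \leq n\,C_1\,\omega(\delta)\,\|\phi\|_\alpha$. Fix any $\tilde s\in(0,1)$ and any $\tilde C>0$; for each $n$, choose $\delta=\delta(n)$ small enough that $n\,C_1\,\omega(\delta) \leq \tilde C\,\tilde s^n$, which is possible since $\omega(\delta)\to 0$. The crucial subtlety of the statement is that $\delta$ is allowed to depend on $n$: this is precisely what lets us trade the naive linear-in-$n$ bound from the telescoping for the geometric decay $\tilde s^n$ required as input by the Keller--Liverani--type perturbation theorem (Theorem \ref{PertOper}) to be applied next. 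The main obstacle is the derivation of $\omega(\delta)$ in step \textbf{(b)}; once this is in hand the rest is essentially bookkeeping.
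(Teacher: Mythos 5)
Your telescoping argument is correct and takes a genuinely different route from the paper. The paper compares $\mc L^n_{\bo\gamma}\phi$ and $\mc L^n\phi$ directly at the $n$-step level: it writes out both operators over the $n$-fold partition $\{U^{(\bo i)}_{\bo\gamma}\}_{\bo i\in\mc I_n}$ and $\{U^{(\bo i)}\}_{\bo i\in\mc I_n}$, splits the domain branch-by-branch into an intersection piece (further decomposed into an oscillation term and a Jacobian-difference term) and two symmetric-difference boundary pieces, and keeps track of the accumulated drift of the iterated inverse branches via the geometric estimate $|{F^{(\bo i)}_{\bo\gamma}}^{-1}-{F^{(\bo i)}}^{-1}|\leq \xi(\delta)/(1-s)$. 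This produces a factor $\#\mc I_n=k^n$ multiplying $\xi(\delta)$, which is then killed by shrinking $\delta$ for the given $n$. Your telescoping
\[
\mc L^n_{\bo\gamma}\phi - \mc L^n\phi=\sum_{j=1}^{n}\mc L_{\gamma_n}\cdots\mc L_{\gamma_{j+1}}(\mc L_{\gamma_j}-\mc L)\mc L^{j-1}\phi
\]
together with the $L^1$-contraction (P3) reduces the whole estimate to a \emph{one-step} perturbation bound plus a uniform Lasota--Yorke bound on $\|\mc L^{j-1}\phi\|_\alpha$, trading the $k^n$ factor for a linear $n$ factor. Since $\delta$ is allowed to depend on $n$ (as you correctly emphasize), either the exponential or the linear dependence can be absorbed into $\tilde C\tilde s^n$, so both proofs close; your route is more modular and arguably cleaner, since the one-step bound is where all the genuine analysis lives and the iteration is pure bookkeeping.

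One small inaccuracy in the sketch: the Jacobian-difference subterm should be read off against $\|\psi\|_\alpha$, not $\|\psi\|_1$, unless you first promote (CM1) to a \emph{relative} bound $|\det DF_\gamma^{(i),-1}-\det DF^{(i),-1}|\leq K\delta\,|\det DF_\gamma^{(i),-1}|$ and then change variables branch by branch. With the absolute bound that (CM1) gives directly, $\int|\psi\circ F_\gamma^{(i),-1}|\,dm$ is controlled through $\|\psi\|_\infty\lesssim\|\psi\|_\alpha$ (as in the paper, via inequality \eqref{essupineq1} and Proposition 3.4 of \cite{Saus}), not through $\|\psi\|_1$. This does not affect your conclusion, since $\|\psi\|_1\leq\|\psi\|_\alpha$ and the dominant contribution in $\omega(\delta)\|\psi\|_\alpha$ is the oscillation term anyway, but the subterm bound as stated is slightly too strong for what (CM1) immediately yields.
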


\begin{proof}
\small
\begin{align*}
\int|\mc L^n_{\bo \gamma}\phi(x)&-\mc L^n \phi(x)|dm(x) \leq\\
 &\leq  \sum_{\bo i\in\mc I_n}\left[\quad \int_{F^{(\bo i)}_{\bo \gamma}(U^{(\bo i)}_{\bo \gamma})\cap F^{(\bo i)}(U^{(\bo i)})}\left|(\phi|\det {DF^{(\bo i)}_{\bo \gamma}}|^{-1})\circ (F^{(\bo i)}_{\bo \gamma})^{-1}(x)-(\phi|\det {DF^{(\bo i)}}|^{-1})\circ{ F^{(\bo i)}}^{-1}(x)\right| dm(x)+\right. \\
&\quad+ \left. \int_{F^{(\bo i)}_{\bo \gamma}(U^{(\bo i)}_{\bo \gamma})\backslash F^{(\bo i)}(U^{(\bo i)})}  \left|(\phi|\det {DF_{\bo \gamma}^{(\bo i)}}|^{-1})\circ (F^{(\bo i)}_{\bo \gamma})^{-1}(x)\right| dm(x)+ \right. \\
&\left. \quad+\int_{F^{(\bo i)}(U^{(\bo i)})\backslash F^{(\bo i)}_{\bo \gamma}(U^{(\bo i)}_{\bo \gamma})}\left|(\phi|\det {DF^{(\bo i)}}|^{-1})\circ (F^{(\bo i)})^{-1}(x)\right| dm(x)\quad \right] \\
&=:\sum_{\bo i\in\mc I_n}[(A)_{\bo i}+(B)_{\bo i}+(C)_{\bo i}]
\end{align*}
\normalsize
We first treat $(A)_{\bo i}$ and then $(B)_{\bo i}$ with $(C)_{\bo i}$ for which analogous arguments hold. Notice that
\begin{align*}
&\phantom{\leq}\left|(\phi|\det {DF^{(\bo i)}_{\bo \gamma}}|^{-1})\circ (F^{(\bo i)}_{\bo \gamma})^{-1}-(\phi|\det DF^{(\bo i)}|^{-1})\circ {F^{(\bo i)}}^{-1}\right| \leq\\
&\leq\left|\phi\circ (F^{(\bo i)}_{\bo \gamma})^{-1}-\phi\circ {F^{(\bo i)}}^{-1}\right| \left|\det {DF^{(\bo i)}}^{-1}\right| +\left|\phi\circ (F^{(\bo i)}_{\bo \gamma})^{-1}\right|\left|\det {DF^{(\bo i)}_{\bo \gamma}}^{-1}-\det {DF^{(\bo i)}}^{-1}\right|
\end{align*}
which upper bounds $(A)_i$ with the sum of two terms. The first one is
\begin{align*}
(1)_i&:= \int_{F_{\bo \gamma}^{(\bo i)}(U_{\bo \gamma}^{(\bo i)})\cap F^{(\bo i)}(U^{(\bo i)})} \left|\phi\circ (F^{(\bo i)}_{\bo \gamma})^{-1}(x)-\phi\circ {F^{(\bo i)}}^{-1}(x)\right| \left|\det {DF^{(\bo i)}}^{-1}\right|dm(x)
\end{align*}
and one can chose a suitable $\delta>0$ such that $\forall\gamma\in B_\delta(\hg)$ and all $i\in\{1,...,k\}$ 
$$|{F^{(i)}_\gamma}^{-1}-{F^{(i)}}^{-1}|<\xi(\delta)$$ with $\xi(\delta)<\epsilon_0(1-s)$, so by induction, 
\small
\begin{align*}
|{F^{(\bo i)}_{\bo \gamma}}^{-1}(x)-{F^{(\bo i)}}^{-1}(x)|&\leq |{ F_{\gamma_1}^{(i_1)}}^{-1}\circ{F^{(i_2,..,i_n)}_{(\gamma_2,...,\gamma_n)}}^{-1}(x)  -{F^{(i_1)}}^{-1}\circ {F^{(i_2,..,i_n)}_{(\gamma_2,...,\gamma_n)}}^{-1}(x) |+ \\
& ~ |{F^{(i_1)}}^{-1}\circ {F^{(i_2,..,i_n)}_{(\gamma_2,...,\gamma_n)}}^{-1}(x)-{F^{(i_1)}}^{-1}\circ {F^{(i_2,...,i_n)}}^{-1}(x)|\\
&\leq \xi(\delta)+ s|{F^{(i_2,..,i_n)}_{(\gamma_2,...,\gamma_n)}}^{-1}(x)- {F^{(i_2,...,i_n)}}^{-1}(x)|
\end{align*}
\normalsize
for all $x\in{F^{(\bo i)}_{\bo \gamma}(U^{(\bo i)}_{\bo \gamma})\cap F^{(\bo i)}(U^{(\bo i)})}$, yielding 

$$
|{F^{(\bo i)}_{\bo \gamma}}^{-1}-{F^{(\bo i)}}^{-1}|\leq \xi(\delta)\frac{1}{1-s}.
$$
This implies that, for any fixed $\epsilon$, choosing $\delta$ so that $\xi(\delta)/(1-s)<\epsilon$
\begin{align*}
(1)_i&\leq \int_{F^{(\bo i)}(U^{(\bo i)})} \osc(\phi,B_\epsilon({F^{(\bo i) }}^{-1}(x))) |\det {DF^{(\bo i)}}^{-1}|dm(x)\\ 
&\leq \int_{U^{(\bo i)}}\osc(\phi,B_\epsilon(y))dm(y)\\ 
\end{align*}
Taking the sum over $\mc I_n$
\begin{align*}
\sum_{\bo i\in\mc I_n}\int_{U^{(\bo i)}}\osc(\phi,B_\epsilon(y))dm(y)&\leq\int_{\R^N}\osc(\phi,B_\epsilon(y))dm(y)\\
&\leq \epsilon^{\alpha}|\phi|_\alpha
\end{align*}

For the second term
\begin{align*}
(2)_i:=&\int_{F^{(\bo i)}_{\bo \gamma}(U^{(\bo i)}_{\bo \gamma})\cap F^{(\bo i)}(U^{(\bo i)})}\left|\phi\circ (F^{(\bo i)}_{\bo \gamma})^{-1}\right|\left|\det {DF_{\bo \gamma}^{(\bo i)}}^{-1}-\det {DF^{(\bo i)}}^{-1}\right|dm\leq \\
&\leq m(\Omega)  \xi(\delta) \essup_\Omega |\phi|	\\
\end{align*}
where $\xi(\delta)$ is a number that can be made arbitrarily small restricting $\delta$. 
From compactness of $\Omega$, there exists a $\bar x$ such that
\begin{align}
\essup_\Omega|\phi|&=\essup_{B_{\frac{\epsilon_0}{2}}(\bar x)}|\phi|\nonumber\\
&\leq\frac{1}{m(B_{\frac{\epsilon_0}{2}}(x))}\int_{B_{\frac{\epsilon_0}{2}}(x)}[|\phi(y)|+\osc(\phi,B_{\frac{\epsilon_0}{2}}(y))]dm(y) \label{essupineq1}
\end{align}
one obtains
$$
(2)_i\leq C' \xi(\delta)\|\phi\|_\alpha
$$
from which
\begin{align*}
\sum_{\bo i\in\mc I_n}(A)_{\bo i}&\leq\epsilon^{\alpha}|\phi|_\alpha+C'(\#\mc I_n)\xi(\delta)\|\phi\|_\alpha.
\end{align*}
For what concerns $(B)_i$
\begin{align*}
(B)_{\bo i}&= \int_{F^{(\bo i)}_{\bo \gamma}(U^{(\bo i)}_{\bo \gamma})\backslash F^{(\bo i)}(U^{(\bo i)})} |(\phi\det {DF_{\bo \gamma}^{(\bo i)}}^{-1})\circ (F^{(\bo i)}_{\bo \gamma})^{-1}(x)| dm\\\
&\leq m(F^{(\bo i)}_{\bo \gamma}(U^{(\bo i)}_{\bo \gamma})\backslash F^{(\bo i)}(U^{(\bo i)}))\tilde s^n \essup_\Omega |\phi| \\
&\leq \xi(\delta) C'' \tilde s_\gamma^n\|\phi\|_\alpha
\end{align*}
where we upper bounded $m(F^{(\bo i)}_{\bo \gamma}(U^{(\bo i)}_{\bo \gamma})\backslash F^{(\bo i)}(U^{(\bo i)}))$ with $\xi(\delta)$ that thanks to (CM2) can be made arbitrarily small reducing $\delta$. Summing all the contributions 
\begin{align*}
\sum_{\bo i\in\mc I_n}(B)_{\bo i}&\leq (\#\mc I_n)\xi(\delta)C''\tilde s_\gamma^n\|\phi\|_\alpha.
\end{align*}
The sum of the $(C)_{\bo i}$ terms can be upper bounded analogously. As already pointed out, for a smaller $\delta$ we can make the upper bound arbitrarily small. This allows, in particular, to obtain an exponential upper bound as \eqref{ExpUppBoun} with respect to some $\tilde s\in(0,1)$.
\end{proof}

We can generalise the above proposition to the case of averaged transfer operators.
\begin{proposition}
There exists $0<\tilde s\leq 1$, and $\tilde C>0$ such that for all $n\in \N$ there is $\delta>0$, satisfying
$$
\|\hat{\mc L}_\nu^n\phi-\mc L^n\phi\|_1\leq \tilde C\tilde s^n\|\phi\|_\alpha, \quad \forall\phi\in V_\alpha
$$
for every probability measure $\nu$ with $\supp\nu\subset	B_\delta(\hg)$.
\end{proposition}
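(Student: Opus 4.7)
The plan is to reduce this proposition directly to the one just proved by expressing the averaged operator iterate as an integral of the non-averaged iterates against the product measure. Observe that, by Fubini and the definition $\hat{\mathcal{L}}_\nu \phi = \int_\Gamma \mathcal{L}_\gamma \phi\, d\nu(\gamma)$, one has for every $\phi \in V_\alpha$
\[
\hat{\mathcal{L}}_\nu^n \phi \;=\; \int_{\Gamma^n} \mathcal{L}_{\gamma_n}\cdots \mathcal{L}_{\gamma_1}\phi \, d\nu^{\otimes n}(\bo\gamma)\;=\;\int_{\Gamma^n}\mathcal{L}^n_{\bo\gamma}\phi\,d\nu^{\otimes n}(\bo\gamma).
\]
Since $\nu^{\otimes n}(\Gamma^n)=1$ and $\mathcal{L}^n\phi$ does not depend on $\bo\gamma$, I can insert $\mathcal{L}^n\phi$ under the integral and write
\[
\hat{\mathcal{L}}_\nu^n \phi - \mathcal{L}^n\phi \;=\; \int_{\Gamma^n} \bigl(\mathcal{L}^n_{\bo\gamma}\phi-\mathcal{L}^n\phi\bigr)\, d\nu^{\otimes n}(\bo\gamma).
\]

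Given $n\in\mathbb{N}$, let $\delta>0$ be the constant furnished by the previous Proposition \ref{MultiPertRes} for this $n$ (shrunk if necessary so that also $B_\delta(\hg)\subset\Gamma'$ from Lemma \ref{PErtLasYorPie}). Assume $\supp\nu\subset B_\delta(\hg)$, so $\supp\nu^{\otimes n}\subset B_\delta(\hg)^n$. Taking $L^1$ norm on both sides and moving the norm through the integral by the triangle inequality (Minkowski) yields
\[
\|\hat{\mathcal{L}}_\nu^n\phi - \mathcal{L}^n\phi\|_1 \;\leq\; \int_{B_\delta(\hg)^n}\|\mathcal{L}^n_{\bo\gamma}\phi-\mathcal{L}^n\phi\|_1 \, d\nu^{\otimes n}(\bo\gamma)\;\leq\;\tilde{C}\tilde{s}^n\|\phi\|_\alpha\cdot\nu^{\otimes n}(B_\delta(\hg)^n),
\]
where the last inequality uses Proposition \ref{MultiPertRes} pointwise for each $\bo\gamma\in B_\delta(\hg)^n$. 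Since $\nu^{\otimes n}(B_\delta(\hg)^n)=1$, the desired bound follows with the same constants $\tilde{C}$ and $\tilde{s}\in(0,1]$ as in the unaveraged case.

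There is essentially no new obstacle: the only delicate point is that the constant $\delta$ in Proposition \ref{MultiPertRes} depends on $n$, so the hypothesis $\supp\nu\subset B_\delta(\hg)$ must be chosen accordingly (for each fixed $n$), exactly as stated. The measurability of $\bo\gamma\mapsto\|\mathcal{L}^n_{\bo\gamma}\phi-\mathcal{L}^n\phi\|_1$, needed to justify Fubini and Minkowski, is standard from the continuity assumptions (CM1)–(CM2) which make the integrand continuous in $\bo\gamma$ on the (metric) support of $\nu^{\otimes n}$.
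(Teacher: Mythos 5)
Your proof is correct and takes essentially the same approach as the paper: expand $\hat{\mathcal L}_\nu^n\phi$ as an integral over $\Gamma^n$ against $\nu^{\otimes n}$, move the $L^1$ norm inside by Fubini/Minkowski, and then apply Proposition~\ref{MultiPertRes} pointwise on $\supp\nu^{\otimes n}\subset B_\delta(\hg)^n$. Your additional remarks about shrinking $\delta$ into $\Gamma'$ and about measurability of the integrand are sound elaborations of what the paper leaves implicit.
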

\begin{proof}
\begin{align*}
\|\hat{\mc L}_\nu^n\phi-\mc L^n\phi\|_1&\leq\int \left|\int_{\Gamma^n}\mc L^n_{\bo\gamma}\phi(x)d\nu^{\otimes n}(\bo\gamma)-\mc L^n\phi(x)\right|dm(x)\\
&\leq\int\int_{\Gamma^n}\left|\mc L^n_{\bo\gamma}\phi(x)-\mc L^n\phi(x)\right| d\nu^{\otimes n}(\bo\gamma)dm(x)\\
&\leq\int_{\Gamma^{n}}\|\mc L^n_{\bo\gamma}\phi-\mc L^n\phi\|_1d\nu^{\otimes n}(\bo \gamma).
\end{align*}

Since $\nu$ is supported on $B_\delta(\hg)$, almost every sequence $\bo\gamma$ in the above integral will belong to $B_\delta(\hg)^n$ allowing a direct application of Proposition \ref{MultiPertRes}.
\end{proof}

The above proposition and Proposition \ref{PertOper} give the spectral properties for the perturbed transfer operators.
\begin{proposition}
There is a neighbourhood of $\hg$, $\Gamma''\subset\Gamma$ such that 
$$
\spec\mc L_\nu=\{1\}\cup\Sigma_0\quad \quad V_\alpha=\R \phi_\nu\oplus X_0
$$
for all probability measures $\nu$ with $\supp\nu\subset\Gamma''$, with $\Sigma_0$ inside a disk of radius $\tilde q\in(0,1)$, and $\phi_\nu$ is the unique stationary density. The projections associated to the splitting are 
$$
\pi_1^{(\nu)}\phi=\left(\int \phi dm\right)\phi_\nu,\quad \pi_0^{(\nu)}\phi=\phi-\left(\int \phi dm\right)\phi_\nu.
$$ 
\end{proposition}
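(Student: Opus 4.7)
The strategy is to apply Theorem \ref{PertOper} with $\mc P_0 := \mc L = \mc L_{\hg}$ and $\mc P_\epsilon := \hat{\mc L}_\nu$ for probability measures $\nu$ supported in a sufficiently small ball around $\hg$. All the pieces needed to verify the hypotheses of that abstract result have already been collected in the previous lemmas and propositions of this subsection; the bulk of the proof is just checking them off, plus identifying the rank-one projection onto the invariant direction at the end.

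First I would verify the four hypotheses of Theorem \ref{PertOper}. Preservation of mass $\int \hat{\mc L}_\nu \phi \, dm = \int \phi \, dm$ and positivity follow from the corresponding properties of each $\mc L_\gamma$ by Fubini. The uniform Lasota--Yorke inequality $\|\hat{\mc L}_\nu^n \phi\|_\alpha \le C \tilde\eta^n \|\phi\|_\alpha + C' \|\phi\|_1$ is obtained by integrating the bound from Lemma \ref{BounNormPertActMult}(1) against $\nu^{\otimes n}$ and combining it with the trivial $L^1$ contraction $\|\hat{\mc L}_\nu^n \phi\|_1 \le \|\phi\|_1$. The approximation estimate $\|\hat{\mc L}_\nu^n \phi - \mc L^n \phi\|_1 \le \tilde C \tilde s^n \|\phi\|_\alpha$, valid for $\supp \nu \subset B_\delta(\hg)$ with $\delta$ small depending on $n$, is precisely the content of the proposition immediately preceding the one we are proving. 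Finally, the spectral hypothesis $\spec(\mc L) = \{1\} \cup \Sigma_0$ with $1$ simple and $\Sigma_0 \subset \{|z| \le q\}$ is assumption (ME6) together with the splitting proposition in Section \ref{prelB}.

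Having checked the hypotheses, Theorem \ref{PertOper} immediately yields a neighbourhood $\Gamma'' \subset \Gamma$ of $\hg$ and a radius $\tilde q \in (0,1)$ such that, for every probability measure $\nu$ with $\supp \nu \subset \Gamma''$, one has $\spec(\hat{\mc L}_\nu) = \{1\} \cup \Sigma_\nu$ with $1$ a simple eigenvalue and $\Sigma_\nu \subset \{z \in \C : |z| \le \tilde q\}$. Normalising the unique corresponding eigenfunction so that $\int \phi_\nu \, dm = 1$ produces the stationary density, and the spectral decomposition $V_\alpha = \R \phi_\nu \oplus X_0$ is supplied by the Ionescu-Tulcea--Marinescu theorem (Theorem \ref{ThmIonTulcMar}).

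The identification of the projections is essentially a duality argument. The functional $\ell(\phi) := \int \phi \, dm$ is continuous on $V_\alpha$ and satisfies $\ell(\hat{\mc L}_\nu \phi) = \ell(\phi)$, hence it is a left eigenfunctional for the eigenvalue $1$. Since $1$ is simple, the left eigenspace is one-dimensional, so the rank-one projection onto $\R \phi_\nu$ must have the form $\pi_1^{(\nu)} \phi = c \, \ell(\phi) \phi_\nu$ for some constant $c$; imposing $\pi_1^{(\nu)} \phi_\nu = \phi_\nu$ fixes $c = 1$, giving the formula in the statement, with $\pi_0^{(\nu)} = I - \pi_1^{(\nu)}$. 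The main subtle point throughout is ensuring that a single $\Gamma''$ and a single $\tilde q$ work uniformly for all admissible $\nu$, but this uniformity is built into Theorem \ref{PertOper} once the $L^1$-approximation estimate is uniform in the support of $\nu$, as it is here.
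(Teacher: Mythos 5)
Your proof is correct and follows the same route as the paper, which likewise presents the proposition as a direct consequence of Theorem~\ref{PertOper} applied with $\mc P_0 = \mc L_{\hg}$ and $\mc P_\epsilon = \hat{\mc L}_\nu$ using the preceding approximation bound and the averaged Lasota--Yorke inequality. Your additional identification of the projections via the left eigenfunctional $\ell(\phi)=\int\phi\,dm$ is a correct (if unremarked by the paper) verification of the stated formulas.
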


\begin{remark}
As already remarked, the stationary measure associated with $\nu=\delta_\gamma$, $\gamma\in\Gamma''$, is the invariant measure for the map $F_\gamma$.
\end{remark}

This proposition is a direct consequence of Theorem \ref{PertOper}. One can easily prove that the invariant densities have uniformly bounded norms.

\begin{lemma}\label{boundedvar2}
For all probability measures $\nu$ with $\supp\nu\subset\Gamma'$
$$
|\phi_\nu |_\alpha\leq \frac{C}{1-\tilde\eta}
$$
\end{lemma}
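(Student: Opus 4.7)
The plan is to exploit the fact that $\phi_\nu$ is a fixed point of the averaged transfer operator $\hat{\mc L}_\nu$ and iterate the uniform Lasota--Yorke inequality established in Lemma \ref{PErtLasYorPie}. The previous proposition guarantees that $\phi_\nu \in V_\alpha$, so $|\phi_\nu|_\alpha$ is finite to begin with, which is what allows the argument to close.

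First I would record two basic facts. Since $\phi_\nu$ is a nonnegative density with $\int \phi_\nu\, dm = 1$, and since $\hat{\mc L}_\nu$ preserves both positivity and total mass, we have $\|\hat{\mc L}_\nu^{k}\phi_\nu\|_1 = 1$ for every $k \geq 0$. Moreover $\hat{\mc L}_\nu \phi_\nu = \phi_\nu$, hence $\hat{\mc L}_\nu^n \phi_\nu = \phi_\nu$ for all $n \in \N$.

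Next, applying Lemma \ref{PErtLasYorPie} inductively, I would show by induction on $n$ that for every $\phi \in V_\alpha$ with $\phi \geq 0$,
\begin{equation*}
|\hat{\mc L}_\nu^n \phi|_\alpha \leq \tilde\eta^n |\phi|_\alpha + C\bigl(1 + \tilde\eta + \tilde\eta^2 + \cdots + \tilde\eta^{n-1}\bigr)\|\phi\|_1 \leq \tilde\eta^n |\phi|_\alpha + \frac{C}{1-\tilde\eta}\|\phi\|_1.
\end{equation*}
The induction step uses Lemma \ref{PErtLasYorPie} applied to $\hat{\mc L}_\nu^{n-1}\phi$ together with the mass-preservation $\|\hat{\mc L}_\nu^{n-1}\phi\|_1 = \|\phi\|_1$.

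Finally, specialising to $\phi = \phi_\nu$ and using $\hat{\mc L}_\nu^n \phi_\nu = \phi_\nu$ and $\|\phi_\nu\|_1 = 1$,
\begin{equation*}
|\phi_\nu|_\alpha = |\hat{\mc L}_\nu^n \phi_\nu|_\alpha \leq \tilde\eta^n |\phi_\nu|_\alpha + \frac{C}{1-\tilde\eta}.
\end{equation*}
Letting $n \to \infty$, the term $\tilde\eta^n |\phi_\nu|_\alpha$ vanishes (this is where we use finiteness of $|\phi_\nu|_\alpha$ and $\tilde\eta < 1$), yielding the desired bound. The only mildly subtle point is the a priori membership $\phi_\nu \in V_\alpha$, but this was already established in the preceding proposition via the spectral splitting; no further obstacle arises.
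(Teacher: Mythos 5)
Your proof is correct and follows essentially the same route as the paper: the paper applies the Lasota--Yorke inequality from Lemma \ref{PErtLasYorPie} once to the fixed point $\phi_\nu$ and rearranges $|\phi_\nu|_\alpha\leq\tilde\eta|\phi_\nu|_\alpha+C$ directly (using $\|\phi_\nu\|_1=1$), whereas you iterate $n$ times and let $n\to\infty$. Both arguments hinge on the same two ingredients — the fixed-point property and the a priori finiteness of $|\phi_\nu|_\alpha$ — so the iteration is an unnecessary but harmless detour.
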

\begin{proof}
From Lemma \ref{PErtLasYorPie}
$$
|\phi_\nu|_\alpha=|\mc L_\nu \phi_\nu|_\alpha \leq \tilde\eta|\phi_\nu|_\alpha+C
$$
which implies
$$
|\phi_\nu|_\alpha\leq\frac{C}{1-\tilde\eta}.
$$
\end{proof}

\subsection{Proof of Theorem~\ref{thmA}}\label{SecProfMaiTheMul}

We first prove Part (1) of Theorem A along the same lines of  \cite{ViaSdds} where it is proven in for one-dimensional maps. 

\begin{proof}[Proof of Part (1) of Theorem A]

By triangular inequality, for all $n\in\N$ and $\nu$ $\supp\nu\subset B_\delta(\hg)$
\begin{align}
\|\phi_\nu-\phi_\hg\|_1&\leq \|\phi_\nu-\mc L_\nu^n\phi_\hg \|_1+\|\mc L_\nu^n\phi_\hg-\phi_\hg\|_1\nonumber\\
&\leq\|\mc L_\nu^n\phi_\nu-\mc L_\nu^n\phi_\hg\|_1+\|\mc L_\nu^n\phi_\nu-\mc L^n\phi_\hg\|_1\nonumber\\
&\leq \tilde q^n\|\phi_\nu-\phi_\hg\|_\alpha+\tilde C\tilde s^n\|\phi_\hg\|_\alpha\label{PertStabInvDen}
\end{align}
where in \eqref{PertStabInvDen}, for the first term we used the spectral splitting and the fact that $\phi_\nu-\phi_\hg\in X_0$, and for the second term we used Proposition \ref{MultiPertRes}. By Lemma \ref{boundedvar2}, $\|\phi_\nu-\phi_\hg\|_\alpha$ is uniformly bounded for $\gamma\in\Gamma'$, and this implies the result choosing $n$ sufficiently large and adjusting $\delta>0$ accordingly.
\end{proof}

We now prove Part (2) of Theorem A. In the proof, we denote by $\phi_\gamma\in V_\alpha$, the unique invariant probability density for the map $F_{\gamma}$, where $\gamma\in\Gamma'$. 
\begin{proof}[Proof of Part (2) of Theorem \ref{thmA}]

We can restate the theorem in terms of the action of the transfer operators on the density of the initial mass distribution. We shall then prove that for all densities $\phi\in V_\alpha$ and every  $\epsilon>0$, there exists $\bar n:=\bar n(\epsilon,\phi)\in\N$ and $\delta$ independent of $\phi$ such that, for all $n\geq \bar n$ and for all sequences $\bo \gamma \in B_\delta(\hg)^n$
$$
\|\mc L_{\bo \gamma}^n\phi-\phi_\hg\|_1\leq\epsilon.
$$

Let us consider the application of the transfer operators on their arguments split by projections $\pi_1^{(\gamma)}$ and $\pi^{(\gamma)}_0$. For example:
$$
\mc L_{\gamma_n}...\mc L_{\gamma_1}\phi=\mc L_{\gamma_n}\pi^{(\gamma_n)}_0\left(\mc L_{\gamma_{n-1}}...\mc L_{\gamma_1}\phi\right)+\mc L_{\gamma_n}\pi^{(\gamma_n)}_1\left(\mc L_{\gamma_{n-1}}...\mc L_{\gamma_1}\phi\right)
$$

By induction
$$
\mc L_{\gamma_n}\mc L_{\gamma_{n-1}}...\mc L_{\gamma_1}\phi=\sum_{(i_1,...,i_n)\in\{0,1\}^n}\mc L_{\gamma_n}\pi^{(\gamma_n)}_{i_n}\mc L_{\gamma_{n-1}}\pi^{(\gamma_{n-1})}_{i_{n-1}}...\mc L_{\gamma_1}\circ\pi^{(\gamma_1)}_{i_1}\phi
$$

Since $\pi_0^{(\gamma)}$ projects on the $X_0$ space, which is invariant under $\mc L_{\gamma}$,  in the above sum only $n$ terms give a nonzero contribution: after projecting on $X_0$ the action of the operators does not leave this space and if we later project on any of the $\R \phi_{\gamma'}$ we obtain zero. This implies that only the non-increasing sequences of $\{0,1\}^n$ may correspond to non-vanishing terms:
$$
\mc L_{\gamma_n}\mc L_{\gamma_{n-1}}...\mc L_{\gamma_1}\phi=\sum_{\scriptsize\begin{array}{c}(i_1,...,i_n)\in\{0,1\}^n\\i_j\geq i_{j+1}\end{array}\normalsize}\mc L_{\gamma_n}\pi^{(\gamma_n)}_{i_n}\mc L_{\gamma_{n-1}}\pi^{(\gamma_{n-1})}_{i_{n-1}}...\mc L_{\gamma_1}\pi^{(\gamma_1)}_{i_1}\phi
$$
In the above sum:
\begin{itemize} 
\item the term with $(0,...,0,1,...,1)$ ($i_j=0$ and $i_{j-1}=1$) equals
$$
\mc L_{\gamma_n}...\mc L_{\gamma_{j}}(\phi_{\gamma_{j-1}}-\phi_{\gamma_j}),
$$
\item the term (0,...,0) equals $\phi_{\gamma_n}$,
\item the term (1,...,1) equals
$$
\mc L_{\gamma_n}...\mc L_{\gamma_{1}}(\phi-\phi_{\gamma_1}).
$$
\end{itemize}
We can now evaluate the $L^1$ norm of the following difference
\begin{align}
\|\mc L_{\gamma_n}\mc L_{\gamma_{n-1}}&...\mc L_{\gamma_1}\phi-\phi_{\hg}\|_1\leq\nonumber\\\nonumber
&\leq \|\phi_{\gamma_n}-\phi_{\hg}\|_1+\|\mc L_{\gamma_n}...\mc L_{\gamma_{1}}(\phi-\phi_{\gamma_1})\|_1+\sum_{j=1}^{n}\|\mc L_{\gamma_n}...\mc L_{\gamma_{j}}(\phi_{\gamma_{j-1}}-\phi_{\gamma_j})\|_1 \\\nonumber
&\leq \|\phi_{\gamma_n}-\phi_{\hg}\|_1+\|\mc L_{\gamma_n}...\mc L_{\gamma_{1}}(\phi-\phi_{\gamma_1})\|_1+\sum_{j=1}^{n}\|\phi_{\gamma_{j-1}}-\phi_{\gamma_j}\|_1 \\\nonumber
\end{align}
where in the last inequality we used (P3).

\paragraph{Step 1} For every $\epsilon>0$, there exists  $\delta'>0$ and $\bar n=\bar n(\delta',\phi)$ such that for all $k\in\N$ and for all $\bo\gamma\in B_{\delta'}(\hg)^{kN}$:
\begin{equation}\label{multipN}
\|\mc L_{\gamma_{k\bar n}}\mc L_{\gamma_{k\bar n-1}}...\mc L_{\gamma_1}\phi-\phi_{\hg}\|_1\leq\epsilon
\end{equation}
We proceed by induction on $k$. Fix $\epsilon'<\epsilon/(2\bar n+2)$, and $\delta'$ such that $\|\phi_\gamma-\phi_{\gamma'}\|_1\leq\epsilon'$ for all $\gamma,\gamma'\in B_{\delta'}(\hg)$. In view of Lemma \ref{BounNormPertActMult} there exists a constant $K$, and $\tilde n(\phi,\epsilon')$ such that for all $n\geq \tilde n$ and all $\gamma_1\in B_{\delta'}(\hg)$ 
$$
\|\mc L_{\gamma_{n}}...\mc L_{\gamma_{1}}(\phi-\phi_{\gamma_1})\|_\alpha<M
$$
Choose $\bar n\geq \tilde n$ such that
\begin{equation}
\tilde q^{\bar n-\tilde n}M\leq \epsilon/2. \label{Nchoice}
\end{equation}
For $\bo\gamma\in B_{\delta'}(\hg)^{\bar n}$
\begin{align*}
\|\mc L_{\gamma_{\bar n}}\mc L_{\gamma_{\bar n-1}}...\mc L_{\gamma_1}(\phi-\phi_{\hg})\|_1&\leq  \epsilon'+\tilde q^{\bar n-\tilde n}\|\mc L_{\gamma_{\tilde N}}...\mc L_{\gamma_{1}}(\phi-\phi_{\gamma_1})\|_\alpha +\bar n\epsilon'\\
&\leq(\bar n+1)\epsilon'+\tilde q^{\bar n-\tilde n}M\\
&\leq \epsilon/2+\epsilon/2\leq \epsilon.
\end{align*}

Now call $\phi':=\mc L_{\gamma_{(k-1)\bar n}}...\mc L_{\gamma_1}\phi$. By the same argument:
\begin{align*}
\|\mc L_{\gamma_{k\bar n}}\mc L_{\gamma_{k\bar n-1}}...\mc L_{\gamma_{\bar n(k-1)+1}}\phi'-\phi_\hg\|_1&\leq (\bar n+1)\epsilon' +\tilde q^{\bar n-\tilde n}M\\
&\leq \epsilon.
\end{align*}

\paragraph{Step 2} Now choose $n>\bar n(\delta',\phi)$. There exists $k,r\in\N$ such that $n=k\bar n+r$. Call $\phi''=\mc L_{t_{k\bar n}}...\mc L_{t_{1}}\phi$. Using upper bound \eqref{multipN} we have
\begin{align*}
\|\mc L_{\gamma_n}...\mc L_{\gamma_1}\phi-\phi_\hg\|_1&=\|\mc L_{\gamma_n}...\mc L_{\gamma_{n-r+1}}\phi''-\phi_\hg\|_1 \\
&\leq \|\phi_{\gamma_n}-\phi_\hg\|_1+\sum_{j=1}^{r}\|\phi_{\gamma_{n-j}}-\phi_{\gamma_{n-j+1}}\|_1+|\phi''-\phi_{\gamma_{n-r+1}}\|_1\\
&\leq\epsilon'+ (r+1)\epsilon'+\epsilon\\
&\leq 2\epsilon.
\end{align*}
\end{proof}

We now consider the proof of Part (3). The main tool we use, along Theorem \ref{thmA},  is a law of large number for dependent random variables with sufficiently fast decaying correlations. 

\begin{theorem}[\cite{Walk2004}]\label{WalkerLLN}
Let $(X_n)_{n\in\N}$ be a sequence of square integrable random variables, such that there exists $r:\N_0\rightarrow\R$ with

 $$\left|
\mathbb{E}[(X_i - \mathbb{E} X_i ) (X_j - \mathbb{E} X_j ) ]
\right| \le r(|i-j|),\quad  i,j \in \mathbb{N}
$$
and
$$
\sum_{k=1}^{\infty} \frac{r(k)}{k} < +\infty. 
$$
then 
$$
\frac{1}{n} \sum_{k=1}^{n} (X_k - \mb E [X_k]) \rightarrow 0\quad \mbox{a.s.} 
$$
\end{theorem}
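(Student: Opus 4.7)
Without loss of generality, assume $\mb E[X_n] = 0$ by replacing each $X_n$ with $X_n - \mb E[X_n]$, and set $S_n := \sum_{k=1}^n X_k$. The plan is to combine an increment variance bound with a M\'oricz-type maximal inequality and a Borel--Cantelli argument along a geometric subsequence. The covariance hypothesis gives
\[
\mb E\!\left[(S_N - S_M)^2\right] \;=\; \sum_{i,j=M+1}^N \mb E[X_i X_j] \;\le\; (N-M)\,r(0) + 2\sum_{k=1}^{N-M-1}(N-M-k)\,r(k) \;=:\; g(N-M),
\]
where $g:\N\to[0,\infty)$ is easily seen to be superadditive because $r(k)\ge 0$. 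An elementary splitting at an auxiliary threshold $K$, writing $r(k) = k \cdot (r(k)/k)$ and using the convergent series $\sum r(k)/k$ to dominate the tail, gives $\tfrac{1}{n}\sum_{k=1}^n r(k) \to 0$, and hence $g(n) = o(n^2)$.

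Next I invoke the classical M\'oricz maximal inequality for sequences with superadditive variance bound, which yields
\[
\mb E\!\left[\max_{M < n \le N}(S_n - S_M)^2\right] \;\le\; C\,(\log_2(N-M))^2\, g(N-M).
\]
Applied with $M = 0$ along the dyadic subsequence $N_j := 2^j$ and combined with Chebyshev's inequality, this gives
\[
\mb P\!\left(\max_{N_j \le n < N_{j+1}}\frac{|S_n|}{n} > \epsilon\right) \;\le\; \frac{C\,(j+1)^2\,g(N_{j+1})}{\epsilon^2\,N_j^2}.
\]
Once the right-hand side is shown summable in $j$, a Borel--Cantelli argument gives $\max_{N_j \le n < N_{j+1}} |S_n|/n \to 0$ almost surely, from which the conclusion $S_n/n \to 0$ a.s. follows.

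The principal obstacle is converting the summability $\sum r(k)/k < \infty$ into a rate on $g(n)/n^2 \to 0$ strong enough to survive the logarithmic loss in the M\'oricz inequality along the dyadic sequence; this is precisely where the weight $1/k$ in the hypothesis enters essentially, via a summation-by-parts argument on $\sum_{k=1}^n r(k)$ that isolates the tails of the convergent series $\sum r(k)/k$. The remaining technical ingredients---the variance bound, the M\'oricz maximal inequality, and the Borel--Cantelli lemma---are standard tools in the theory of strong laws for weakly dependent sequences.
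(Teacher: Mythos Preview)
The paper does not prove this statement; it is quoted as a black-box tool from the cited reference and then applied. So there is no paper proof to compare against, and the question is only whether your sketch is itself sound.

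The architecture is reasonable, but the argument does not close at precisely the point you flag as the ``principal obstacle''. Take $r(k) = (\log k)^{-2}$ for $k \ge 3$. Then $\sum_{k} r(k)/k < \infty$, while $\sum_{k \le n} r(k) \asymp n/(\log n)^2$, so that $g(n) \asymp n^2/(\log n)^2$. Feeding this into your Borel--Cantelli sum gives
\[
\sum_{j} \frac{(j+1)^2\,g(2^{j+1})}{4^j} \;\asymp\; \sum_{j} (j+1)^2 \cdot \frac{1}{j^2} \;=\; \infty,
\]
so the M\'oricz bound with the $(\log_2 N)^2$ factor is too lossy along the dyadic subsequence. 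No summation-by-parts on $\sum_{k \le n} r(k)$ rescues this: the rate $g(n)/n^2 \asymp (\log n)^{-2}$ is exactly cancelled by the logarithmic loss. Note that \emph{without} the $(\log)^2$ factor the sum $\sum_j g(2^j)/4^j$ \emph{is} finite under the hypothesis (swap the order of summation and use $\sum r(k)/k < \infty$); this is why the weight $1/k$ is sharp for the subsequence step itself, and why the entire difficulty lies in controlling the maxima over the dyadic gaps.

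The standard proofs of this result (e.g.\ Lyons, \emph{Michigan Math.\ J.}\ 1988, or the reference the paper cites) do not proceed via a single second-moment maximal inequality plus dyadic Borel--Cantelli. One route is a G\'al--Koksma-type argument exploiting $\sum_n g(n)/n^3 < \infty$ --- which \emph{is} equivalent to $\sum_k r(k)/k < \infty$ --- combined with a finer chaining; another is a two-scale subsequence scheme in which each dyadic block is further subdivided into a slowly growing number of sub-blocks, trading the maximal inequality for a union bound over polynomially many endpoints. Either way, the step you defer is the substance of the theorem, not a routine verification.
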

Given an observable $\psi\in V_\alpha$, $n\in\N$ and $\bo \gamma\in \Gamma'^\N$ (where $\Gamma'$ is as in Lemma \ref{BounNormPertActMult}), we define 
$$
\psi_0:=\psi \mbox{  and  }\psi_n:=U_{\gamma_1}... U_{\gamma_n}\psi
$$ 
and denote the sum of the observable over the orbit as
$$
S_n(\psi)(x)=\sum_{i=0}^{n-1}\psi_i(x).$$
 $(\psi_n)_{n\in \N_0}$ is a sequence of square integrable random variables on the measure space $(\R^N,m)$, and $ \psi_n-\mb E[\psi_n]=U_{\gamma_1}...U_{\gamma_n}(\tilde \psi_n)$ where 
$$
\tilde \psi_n:=  \psi-\int U_{\gamma_1}\circ...\circ U_{\gamma_n}\psi dm.
$$

We can estimate the covariance.
\begin{lemma}\label{DecCorLLN2} For any $\psi \in V_{\alpha}$, there exist a constant $C= C(\psi,\varepsilon_0,N)>0$ and $q < 1$ depending only on $F_{\hg}$ such that 
$$
\mb E\left[\left(\psi_i-\mb E[\psi_i]\right)\left(\psi_j-\mb E[\psi_j]\right) \right]\leq  C q^{j-i} .
$$
for every $\bo \gamma\in \Gamma'^\N$, $i,j\in\N$.
\end{lemma}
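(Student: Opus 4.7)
The plan is to use the duality $\int U_{\bo\gamma} f\cdot g\,dm = \int f\cdot \mc L_{\bo\gamma} g\,dm$ twice to rewrite the covariance as a pairing between $\psi$ and a non-autonomous iterated image of a zero-mean function in $V_\alpha$, and then to obtain exponential decay of that iterated image by comparing the non-autonomous product to the autonomous iterate $\mc L^{j-i}$ (using Proposition \ref{MultiPertRes}) and invoking the spectral gap of $\mc L := \mc L_{\hg}$ on the zero-mean subspace $X_0$.

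Assume $i\le j$. Because Koopman operators are multiplicative, one has $\psi_j = U_{\gamma_1}\cdots U_{\gamma_i}\eta$ with $\eta:=U_{\gamma_{i+1}}\cdots U_{\gamma_j}\psi$, and therefore $\psi_i\psi_j = U_{\gamma_1}\cdots U_{\gamma_i}(\psi\eta)$. Writing $\rho_i:=\mc L_{\gamma_i}\cdots\mc L_{\gamma_1}\1$ and $A_i := \mb E[\psi_i] = \int\psi\rho_i\,dm$, the duality gives $\mb E[\psi_i\psi_j] = \int\psi\eta\rho_i\,dm$ and $\mb E[\psi_j] = \int\eta\rho_i\,dm$. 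A short expansion, using $\int\rho_i\,dm = 1$, shows that
\[
\mb E\bigl[(\psi_i-\mb E\psi_i)(\psi_j-\mb E\psi_j)\bigr] = \int \eta\cdot h_i\,dm, \qquad h_i := (\psi-A_i)\rho_i,
\]
and $\int h_i\,dm = 0$ by construction. A second use of duality recasts the covariance as $\int\psi \cdot \mc L_{\gamma_j}\cdots\mc L_{\gamma_{i+1}} h_i\,dm$, whose absolute value is bounded by $\|\psi\|_\infty\cdot \|\mc L_{\gamma_j}\cdots\mc L_{\gamma_{i+1}} h_i\|_1$.

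To obtain exponential decay of the iterate applied to the zero-mean function $h_i$, I would telescope
\[
\mc L_{\gamma_j}\cdots\mc L_{\gamma_{i+1}} h_i = \mc L^{j-i}h_i + \bigl(\mc L_{\gamma_j}\cdots\mc L_{\gamma_{i+1}} h_i - \mc L^{j-i} h_i\bigr).
\]
The spectral gap established in Section \ref{prelB} yields $\|\mc L^{j-i}h_i\|_1 \le \|\mc L^{j-i}h_i\|_\alpha \le C_0\, q_0^{j-i}\|h_i\|_\alpha$ for some $q_0<1$, since $h_i\in X_0$. Proposition \ref{MultiPertRes} controls the discrepancy between the two products in $L^1$ by $\tilde C\tilde s^{j-i}\|h_i\|_\alpha$ (after shrinking the neighbourhood $\Gamma'$ so that the perturbation estimate applies with a $\delta$ uniform across the range of lengths we use). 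Setting $q:=\max(q_0,\tilde s)<1$, which depends only on $F_\hg$, this yields $\|\mc L_{\gamma_j}\cdots\mc L_{\gamma_{i+1}} h_i\|_1 \le C_1 q^{j-i}\|h_i\|_\alpha$.

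It remains to verify that $\|h_i\|_\alpha$ is bounded uniformly in $i$ and in $\bo\gamma\in(\Gamma')^{\mb N}$. This uses the algebra property $\|fg\|_\alpha \le C_\ast\|f\|_\alpha\|g\|_\alpha$ of $V_\alpha$, which follows from the pointwise inequality $\osc(fg,B)\le \|f\|_\infty\osc(g,B)+\|g\|_\infty\osc(f,B)$ together with the continuous embedding $V_\alpha\hookrightarrow L^\infty$ (already exploited implicitly in \eqref{essupineq1}). Lemma \ref{BounNormPertActMult}(1) applied to the initial density $\1$ furnishes $\|\rho_i\|_\alpha\le K$ uniformly, and since $|A_i|\le \|\psi\|_\infty\le C\|\psi\|_\alpha$ the triangle inequality gives $\|h_i\|_\alpha\le C'(\|\psi\|_\alpha,\epsilon_0,N)$. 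Collecting everything produces the claimed bound with $q=q(F_\hg)<1$ and $C=C(\psi,\epsilon_0,N)$. The main obstacle is the non-autonomous character of the composition: one cannot simply multiply the single-step spectral gaps of distinct $\mc L_\gamma$'s, and the trick is precisely to pay an exponentially small perturbative error to replace the non-autonomous product by the autonomous iterate $\mc L^{j-i}$, on which the genuine spectral gap of $\mc L_\hg$ then provides the required decay.
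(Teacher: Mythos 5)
Your initial reduction is essentially the same as the paper's: both proofs use duality twice to write the covariance as $\int\psi\cdot\mc L_{\gamma_j}\cdots\mc L_{\gamma_{i+1}}h_i\,dm$ with $h_i$ a zero-mean function obtained by multiplying $\psi$ (centered) by $\mc L_{\gamma_i}\cdots\mc L_{\gamma_1}\1$, and both then bound $\|h_i\|_\alpha$ uniformly via the algebra property of $V_\alpha$, Lemma~\ref{BounNormPertActMult}(1), and the embedding $V_\alpha\hookrightarrow L^\infty$. The divergence is in how the iterate $\mc L_{\gamma_j}\cdots\mc L_{\gamma_{i+1}}h_i$ is estimated. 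The paper invokes directly that each $\mc L_\gamma$ restricted to $X_0$ is a contraction in $\|\cdot\|_\alpha$ with uniform rate $\tilde q$, so the non-autonomous product contracts at rate $\tilde q^{j-i}$ without any comparison to the autonomous iterate.

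Your telescoping replacement of this step has a genuine gap. You split $\mc L_{\gamma_j}\cdots\mc L_{\gamma_{i+1}}h_i = \mc L^{j-i}h_i + \bigl(\mc L_{\gamma_j}\cdots\mc L_{\gamma_{i+1}}h_i-\mc L^{j-i}h_i\bigr)$ and control the remainder by Proposition~\ref{MultiPertRes}, but that proposition has the quantifier structure ``for all $n$ there is $\delta>0$'': the neighbourhood $B_\delta(\hg)$ in which the estimate \eqref{ExpUppBoun} holds shrinks with $n$ (in the proof one must absorb a factor $\#\mc I_n = k^n$, so $\xi(\delta)$ must decay like $k^{-n}$). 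Since the lemma you are proving must hold for a \emph{fixed} neighbourhood $\Gamma'$ and \emph{all} gaps $j-i\in\N$, there is no single $\delta$ that makes the perturbation estimate valid for every length $j-i$ you need; your parenthetical ``shrinking $\Gamma'$ so that $\delta$ is uniform across the range of lengths we use'' is exactly the thing that cannot be done, because that range is unbounded. To rescue a telescoping strategy you would have to decompose the product into blocks of a fixed length $\bar n$ (so Proposition~\ref{MultiPertRes} is applied only with $n=\bar n$, for which a single $\delta$ suffices) and iterate, which is precisely the pattern used in Steps~1 and~2 of the paper's proof of Part~(2); but as written, your argument does not close, whereas the paper avoids the issue by never comparing to the autonomous iterate at all.
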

\begin{proof}
Without loss of generality, assume that $j>i$ and consider
\begin{align*}
R_{ij} & =  \mb E\left[\left(\psi_i-\mb E[\psi_i]\right)\left(\psi_j-\mb E[\psi_j]\right) \right]\\
&= \mb E\left[U_{\gamma_{1}}...U_{\gamma_i}\tilde\psi_i\cdot U_{\gamma_{1}}...U_{\gamma_j}\tilde\psi_j \right],
\end{align*}
then, using that properties of the Koopman and Transfer operator we obtain that

\begin{align*}
R_{ij}&=\mb E\left[\tilde\psi_i\cdot U_{\gamma_{i+1}}...U_{\gamma_j}\tilde \psi_j\cdot\mc L_{\gamma_i}...\mc L_{\gamma_1}1\right]\\
&=\mb E\left[ \tilde\psi_j \cdot  \mc L_{\gamma_{j}}...\mc L_{\gamma_{i+1}}\left(\tilde\psi_i\cdot\mc L_{\gamma_i}...\mc L_{\gamma_1}1  \right) \right].
\end{align*}
Since 
\begin{align*}
&\int  \tilde \psi_i\cdot\mc L_{\gamma_i}...\mc L_{\gamma_1}1 dm= \int U_{\gamma_1}...U_{\gamma_i}\left(\psi -\int U_{\gamma_1}... U_{\gamma_i}\psi dm\right)dm =0
\end{align*}
implies that $\tilde \psi_i\cdot\mc L_{\gamma_i}...\mc L_{\gamma_1}1 \in X_0$, and since $\mc L_{\gamma}$ restricted to $X_0$ are contractions with respect to the $\|\cdot\|_\alpha$ norm, we obtain the bound
\begin{align*}
 \| \mc L_{\gamma_{j}}...\mc L_{\gamma_{i+1}}\left(\tilde\psi_i\cdot\mc L_{\gamma_i}...\mc L_{\gamma_1}1  \right)\|_1&\leq \tilde q^{j-i}\|\tilde\psi_i\cdot\mc L_{\gamma_i}...\mc L_{\gamma_1}1\|_\alpha.
\end{align*}
By Proposition 3.4 in \cite{Saus}, $V_\alpha$ is an algebra and 
\begin{align}
\|\tilde\psi_i\cdot\mc L_{\gamma_i}...\mc L_{\gamma_1}1\|_\alpha&\leq C_\# \|\tilde\psi_i\|_\alpha\|\mc L_{\gamma_i}...\mc L_{\gamma_1}1\|_\alpha \label{algebraineq}\\
&\leq C_\#\left(|\psi|_\alpha+\|\tilde\psi_i\|_1\right)+\tilde \eta+\frac{c}{1-\tilde\eta} \label{unifboundosc}\\
&\leq C_\#	\left(1+\|\psi\|_\alpha\right) \nonumber
\end{align}
where $C_\#$ stands for an uninfluential constant uniform on $\psi$, $i$, and $\bo \gamma$. Inequality \eqref{algebraineq} is a consequence of the upper bound in Proposition 3.4 from \cite{Saus}; \eqref{unifboundosc} is derived from the first point in Lemma \ref{BounNormPertActMult}. We conclude noticing that, by H\"older inequality
\begin{align*}
\mb E\left[ \tilde\psi_j \cdot  \mc L_{\gamma_{j}}...\mc L_{\gamma_{i+1}}\left(\tilde\psi_i\cdot\mc L_{\gamma_i}...\mc L_{\gamma_1}1\right) \right]&\leq \|\tilde\psi_j\|_\infty\tilde q^{j-i}C_\#(1+\|\psi\|_\alpha)\\
&\leq  \|\psi\|_\infty\tilde q^{j-i}C_\#(1+\|\psi\|_\alpha)
\end{align*}

By Proposition 3.4 in \cite{Saus}, any $\psi\in V_\alpha$ is an essentially bounded function with

$$
\| \psi \|_{\infty } \le \frac{\max\{1 , \varepsilon^{\alpha}\}}{\gamma_N
\varepsilon_0^N} \| \psi  \|_{\alpha},
$$
where $\gamma_N$ is the volume of the $N-$dimensional unit ball.
Hence, taking 
$$
C:= C_\#  \frac{\max\{1 , \varepsilon^{\alpha}\}}{\gamma_N
\varepsilon_0^N} \| \psi  \|_{\alpha} (1+\|\psi\|_\alpha)
$$
we conclude the proof.
 \end{proof}

\begin{proof}[Proof of Part (3) of Theorem \ref{thmA}]
We know that
\begin{align*}
\mb E[\psi_k]&=\int\psi_kdm\\
&=\int U_{\gamma_1}...U_{\gamma_k}(\psi)dm\\
&=\int \psi \mc L_{\gamma_k}...\mc L_{\gamma_1}1dm
\end{align*}
and, using Theorem \ref{thmA},  for every $k>\bar n$ and sufficiently small perturbations 
$$
\|\mc L_{\gamma_k}...\mc L_{\gamma_1}1-\phi_0\|_1<\epsilon \quad\Rightarrow \quad\left|\mb E[\psi_k]-\int \psi \phi_0 dm\right|\leq \epsilon\mb E[|\psi|],
$$
and 
$$
\int \psi\phi_0dm-\epsilon\mb E[|\psi|]\leq\liminf_{n\rightarrow\infty}\frac{1}{n}\sum_k\mb E[\psi_k]\leq\limsup_{n\rightarrow\infty}\frac{1}{n}\sum_k \mb E[\psi_k]\leq \int \psi\phi_0dm+\epsilon\mb E[|\psi|].
$$
Thanks to Lemma \ref{DecCorLLN2} we can directly apply Theorem \ref{WalkerLLN} to the random variables $X_k= \psi_k$. This implies that for almost every $x\in M$
\begin{equation}\label{Eq:LLNResult}
\lim_{n\rightarrow\infty}\frac{1}{n}\sum_{k=0}^{n-1}\psi_k(x)-\mb E[\psi_k]=0.
\end{equation}
which implies that 
$$
\int \psi\phi_0dm-\epsilon\mb E[|\psi|]\leq\liminf_{n\rightarrow\infty}\frac{1}{n}\sum_k\psi_k(x)\leq\limsup_{n\rightarrow\infty}\frac{1}{n}\sum_k \psi_k(x)\leq \int \psi\phi_0dm+\epsilon\mb E[|\psi|].
$$
\end{proof}

\begin{proof}[Proof of Corollary \ref{Cor:WeakTop}]
To prove the corollary we use the Levy-Prokhorov metric for the weak topology. Given two probability measures $\mu,\nu$ on the Borel $\sigma-$algebra $(\Omega,\mc B(\Omega))$, the Levy-Prokhorov metric is defined as 
$$
d_{LP}(\mu,\nu):=\inf\left\{s|\quad\mu(A)\leq\nu(A_s)+s\quad\forall A\in\mc B\right\},
$$
where $A_s$ is the set of points at Euclidean distance strictly less than $s$ from $A$. Since the underlying topological space $\Omega$ is separable, it induces the weak topology on the space of Borel probability measures.
To prove the corollary is then enough to show that fixed $\epsilon>0$ there is a $\delta>0$ such that for every sequence $\bo\gamma\in B_\delta(\hg)^{\N}$, and for almost every $x\in \Omega$ there is $\bar n$ such that  
$$
\mu_{n,x}:=\frac{1}{n}\sum_{i=0}^{n-1}\left(F_{\bo \gamma}^i\right)_*\delta_{x}\in B_\epsilon(\mu_\hg),\quad \forall n>\bar n,
$$ 
where $B_\epsilon(\mu_\hg)$ is the $\epsilon$-ball around $\mu_\hg$ w.r.t. the metric $d_{LP}$.

$$
\mu_{n,x}(A)=\frac{1}{n}\sum_{i=0}^{n-1}\chi_{A}\circ F^i_{\bo\gamma}(x) 
$$
and, from equation \eqref{Eq:LLNResult} in the proof of  Part (3) of Theorem A with $\psi=\chi_A$,  
$$
\mu_{n,x}(A)-\frac{1}{n}\sum_i\mb E\left[\chi_A\circ F^i_{\bo\gamma}(x) \right]\rightarrow 0 
$$
for almost every $x\in\Omega$. Using the properties of the transfer operator
\begin{equation}\label{Eq:conv}
\mu_{n,x}(A)-\frac{1}{n}\sum_{i=0}^{n-1}\int_A\mc L_{\bo\gamma}^i(1)dm\rightarrow 0 \quad\mbox{a.e.}
\end{equation}
$L^1$ convergence of densities implies weak convergence of the corresponding probability measures, thus
\begin{equation}\label{Eq:resultofTheo}
\|\mc L^i_{\bo\gamma}(1)-\phi_\hg\|_1\leq \epsilon'\Rightarrow d_{LP}\left((F^i_{\bo\gamma})_*m,\mu_\hg\right)\leq \Delta(\epsilon')
\end{equation}
with $\Delta(\epsilon')\rightarrow 0$ for $\epsilon'\rightarrow 0$ which implies
\begin{equation}
\int_A\mc L_{\bo\gamma}^i(1)dm=(F^i_{\bo\gamma})_*m(A)\leq \mu_{\hg}(A_{\Delta(\epsilon')})+\Delta(\epsilon')\quad \forall A\in\mc B
\end{equation}

Now the main problem is that convergence \eqref{Eq:conv} is not uniform in $A$. To overcome this issue, we use the standard technique to approximate any measurable set $A$ as the union of sets taken from a sufficiently fine (in terms of the Euclidean metric), but finite collection, and use uniformity of \eqref{Eq:LLNResult} for this finite collection to deduce uniformity for any $A$. Fix $\delta'>0$. From the topological properties of $\Omega$, one can find a finite collection of balls $\{B_i\}_{i=1}^J$ such that: $\diam(B_i)<\delta'$, $m(\cup_{i=1}^J B_i)>1-\delta'$. The set 
$$
\mc D:=\{\cup_{i\in\mc J}B_i|\quad \mc J\subset \{1,...,J\}\}, 
$$ 
given by all possible unions of sets from the collection, is itself finite. This means that we can find a subset $\Omega_{\delta'}\subset \Omega$ of full measure, such that for every $x\in \Omega_{\delta'}$ there is $\bar n=\bar n(x)$ such that 
\begin{equation}\label{Eq:UniformBoundMeas}
\left|\mu_{n,x}(B)-\frac{1}{n}\sum_{i=0}^{n-1}\int_BP_{\bo\gamma}^i(1)dm\right|<\delta', \quad \forall B\in\mc D\mbox{ and }\forall n>\bar n'.
\end{equation}
From Theorem A, we can choose  $\delta>0$ so small that \eqref{Eq:resultofTheo} holds for every $i>N_1$ and $\Delta(\epsilon')<\delta'$. Thus there is a $N_2>N_1$ such that: 
\begin{equation}\label{Eq:upperboundLevProktype}
\left|\frac{1}{n}\sum_{i=0}^{n-1}\int_BP_{\bo\gamma}^i(1)-\mu_{\hg}(B_{\Delta(\epsilon')})\right|<2\Delta(\epsilon')<2\delta'\quad \forall n>N_2.
\end{equation}
Putting \eqref{Eq:upperboundLevProktype} and \eqref{Eq:UniformBoundMeas} together 
$$
\mu_{n,x}(B)\leq \mu_{\hg}(B_{\delta'})+3\delta'.
$$
Now consider $\mc J_A:=\{i| \quad B_i\cap A\neq 0\}$. It holds that: $A\subset \left(\cup_{i\in\mc J_A}B_i\right) \cup\left(\cup_{i=1}^JB_i\right)^c$ (trivially); and $ \left(\cup_{i\in\mc J_A}(B_i)_{\delta'}\right)\subset A_{2\delta'}$ (from the condition on the diameters of the sets of the partition).
\begin{align*}
\mu_{x,n}(A)&\leq \mu_{x,n}(\cup_{i\in\mc J_A}B_i)+\delta'\\
&\leq \mu_{\hg}(\cup_{i\in\mc J_A}(B_i)_{\delta'})+4\delta'\\
&\leq \mu_{\hg}(A_{2\delta'})+4\delta'
\end{align*}
which for the right choice of $\delta'$ gives the desired result.
\end{proof}

\begin{appendices}
\section{$C^{1+\nu}$ Expanding Maps }
In this appendix we consider a collection $ \{F_\gamma\}_{\gamma\in \Gamma}$ of $C^{1+\nu}$ maps from $M$ into itself (where $\Gamma$ is some
metric space) which are continuous at $\hg$, i.e. so that  the map $\gamma\mapsto F_\gamma$ from $\Gamma$ to $C^{1+\nu}(M,M)$ is
{\em continuous at $\hg$}. 
We will assume that $F=F_{\hg}$ is expanding and for 
each sequence  $\bo \gamma=(\gamma_1,\gamma_2,\dots)\in (B_{\delta}(\hg))^{\N}$ we analyse the ergodic properties of 
compositions of the form
$$F^n_{\bo \gamma}=F_{\gamma_n} \dots  F_{\gamma_1}.$$ 
We are particularly interested to study what happens when we let $\delta$ tend to zero.

\subsection{Setting and Result}
Let $M$ be connected and compact Riemannian manifold, with Riemannian distance $d$ and Riemannian volume $m$ (we normalise so that $m(M)=1$). 
\begin{definition}\label{ExpSmoothMap}
 We say that $F:M\rightarrow M$ is a $C^{1+\nu}$ {\em expanding map} if $F$ is  differentiable, $\log |\det D_xF|$ is a locally $\nu$-H\"older function and there exists a constant $\sigma\in(0,1)$ such that
$$
\|D_xF(v)\|\geq\sigma^{-1}\|v\|\quad\quad \forall x\in M,\quad \forall v\in T_xM.
$$
\end{definition}

\medskip It is well-known that 
$F$ has an {\em absolutely continuous invariant measure} $\mu_0$ 
with a density $\phi_0$ which is strictly positive and $\nu$-H\"older.

\bigskip

The next theorem shows that for expanding maps all ergodic properties are robust:

\begin{theo}
 \label{thmB} 
Let $F$ be a $C^{1+\nu}$-expanding map and $\{F_{\gamma}\}_{\gamma\in \Gamma}$ a collection of $C^{1+\nu}$ perturbed versions continuous at $\hg$. Then for every $\epsilon>0$, there exists $\delta>0$ so that: 

\begin{enumerate}
\item[(1)] \cite{ViaSdds}
if $\gamma\in B_{\delta}(\hg)$ then $F_\gamma$ has an invariant density $\phi_\gamma$, and
$$
\sup_{x\in M}|\phi_\gamma(x)-\phi_{\hg}|<\epsilon;
$$

 \item[(2)] for every $\bo\gamma\in B_{\delta}(\hg)^{\N}$ and probability measure $\mu=\phi m$ with a strictly positive $\nu$-H\"older density $\phi$, there exists $\bar n$ so that for $n>\bar n$, 
the Radon-Nykodim derivative 
$
\frac{d}{dm}(F^n_{\bo \gamma})_*\mu
$
is strictly positive and $\nu$-H\"older and 

\begin{equation}
\sup_{x\in M}\left| \left(\frac{d}{dm} (F^n_{\bo \gamma})_*\mu\right)(x)-\phi_{\hg}(x)\right|<\epsilon;
\label{eq:ThmA}\end{equation}

\item[(3)] there exists a set $X_{\bo \gamma}\subset M$ of full measure such that for each $\nu$-H\"older observable $\psi$ and for each $x\in X_{\bo \gamma}$ 
$$
\int \psi\phi_0 dm -\epsilon\int |\psi| dm \leq \liminf_{n\rightarrow\infty}\frac{1}{n}S_n(\psi)(x)\leq \limsup_{n\rightarrow\infty}\frac{1}{n}S_n(\psi)(x)\leq \int \psi \phi_0 dm+\epsilon\int |\psi| dm,
$$
where $S_n(\psi)(x)=\psi(x)+\sum_{i=1}^{n-1}\psi (F_{\gamma_i}...  F_{\gamma_1}(x))$. 
\end{enumerate}
\end{theo}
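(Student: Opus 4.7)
Part (1) is the classical stochastic stability result for $C^{1+\nu}$ expanding maps recorded in \cite{ViaSdds}, and I would simply quote it. The substance lies in Parts (2) and (3), and my plan is to mirror the architecture of the proof of Theorem \ref{thmA}, replacing the spectral splitting of $\mc L_\gamma$ on $V_\alpha$ by a Birkhoff--Hilbert cone argument. Concretely, I introduce the log-H\"older cone
$$
\mc C_a := \{\phi\in C^0(M,(0,\infty)) : \phi(x)\le \phi(y)\exp(a\,d(x,y)^\nu)\ \text{for all}\ x,y\in M\}
$$
and verify, using uniform expansion of $F$ and the $\nu$-H\"older control on $\log|\det DF_\gamma|$, that for a suitable $a>0$ there exist $\delta_0>0$ and $a'<a$ such that $\mc L_\gamma(\mc C_a)\subset \mc C_{a'}$ for every $\gamma\in B_{\delta_0}(\hg)$. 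A standard computation then bounds the Hilbert diameter of $\mc L_\gamma(\mc C_a)$ inside $\mc C_a$ by a constant $\Delta<\infty$ uniform in $\gamma$, giving a uniform contraction factor $\Theta=\tanh(\Delta/4)<1$ of the Hilbert metric under any composition $\mc L^n_{\bo\gamma}$ with $\bo\gamma\in B_{\delta_0}(\hg)^{\N}$. Comparison between the Hilbert metric and the uniform norm on the normalised slice $\{\phi\in\mc C_a:\int\phi\,dm=1\}$ then yields
$$
\|\mc L^n_{\bo\gamma}\phi_1-\mc L^n_{\bo\gamma}\phi_2\|_{\infty}\le K\,\Theta^{n}\,d_{H}(\phi_1,\phi_2)
$$
for any $\phi_1,\phi_2\in\mc C_a$ of unit mass, with $K$ depending on uniform bounds on the H\"older seminorms of cone elements but not on $\bo\gamma$ or $n$.

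To deduce Part (2), I would use the telescoping identity
$$
\mc L^n_{\bo\gamma}\phi-\phi_{\hg} \;=\; (\phi_{\gamma_n}-\phi_{\hg}) \;+\; \mc L^n_{\bo\gamma}(\phi-\phi_{\gamma_1}) \;+\; \sum_{k=2}^{n}\mc L_{\gamma_n}\cdots\mc L_{\gamma_k}(\phi_{\gamma_{k-1}}-\phi_{\gamma_k}),
$$
which mirrors the splitting used in the proof of Part (2) of Theorem \ref{thmA}. Each bracketed difference is a difference of two unit-mass cone densities, so the middle term is bounded by $K\,\Theta^{n}\,d_H(\phi,\phi_{\gamma_1})$, which is smaller than $\varepsilon/3$ once $\bar n$ is chosen large (noting that $\phi$ lands in $\mc C_a$ after one iterate even if its initial H\"older ratio is big). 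The $k$-th summand is bounded by $K\,\Theta^{n-k+1}\,d_H(\phi_{\gamma_{k-1}},\phi_{\gamma_k})$, and Part (1) makes $d_H(\phi_{\gamma_{k-1}},\phi_{\gamma_k})\le \varepsilon''$ uniformly by shrinking $\delta$, so the whole sum is dominated by $K\varepsilon''/(1-\Theta)$ independently of $n$. The first term $\|\phi_{\gamma_n}-\phi_{\hg}\|_\infty$ is also $\le\varepsilon/3$ by Part (1). Choosing $\delta$ then $\bar n$ appropriately completes Part (2).

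For Part (3), I would follow verbatim the derivation of Part (3) of Theorem \ref{thmA}. The contraction above implies the analogue of Lemma \ref{DecCorLLN2}: for any $\nu$-H\"older $\psi$ and $\bo\gamma\in B_{\delta_0}(\hg)^{\N}$,
$$
\bigl|\mb E\bigl[(\psi_i-\mb E\psi_i)(\psi_j-\mb E\psi_j)\bigr]\bigr| \;\le\; C\,\Theta^{j-i},
$$
obtained by rewriting the covariance as $\int \tilde\psi_j\,\mc L_{\gamma_j}\cdots\mc L_{\gamma_{i+1}}\bigl(\tilde\psi_i\cdot\mc L_{\gamma_i}\cdots\mc L_{\gamma_1}\1\bigr)\,dm$, observing that the inner factor has zero mean and a uniformly bounded $\nu$-H\"older norm (because it is a product of $\nu$-H\"older functions, the second of which has norm controlled by cone invariance), and then applying the sup-norm contraction after the standard $h\pm c\1$ splitting of a mean-zero H\"older function into two cone elements of equal mass. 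Walkden's law of large numbers (Theorem \ref{WalkerLLN}) then gives $\tfrac1n\sum_k(\psi_k-\mb E\psi_k)\to 0$ almost surely, while the identity $\mb E[\psi_k]=\int \psi\cdot\mc L_{\gamma_k}\cdots\mc L_{\gamma_1}\1\,dm$ combined with Part (2) applied to the density $\1$ bounds $|\mb E[\psi_k]-\int\psi\phi_{\hg}\,dm|\le\varepsilon\int|\psi|\,dm$ for $k$ large, yielding the claimed liminf/limsup bounds.

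The step I expect to require the most care is the passage from Hilbert-metric contraction to uniform-norm contraction on mean-zero differences, i.e.\ the constant $K$ in the first display and its uniformity across $\bo\gamma\in B_\delta(\hg)^{\N}$. Hilbert contraction naturally gives multiplicative control on $\mc L^n_{\bo\gamma}\phi_1/\mc L^n_{\bo\gamma}\phi_2$, and promoting this to an additive sup-norm bound demands uniform two-sided bounds on the pushed densities together with uniform control of their H\"older seminorms; both follow from cone invariance, but the bookkeeping needed to make $K$ genuinely independent of $\bo\gamma$ is the technical heart of the proof. Once this is secured, the remainder is a direct adaptation of Section \ref{SecProfMaiTheMul} with $V_\alpha$ replaced by $\mc C_a$ and the $L^1$ norm by the uniform norm.
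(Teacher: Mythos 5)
Your plan for Part (1) agrees with the paper (both quote \cite{ViaSdds}), and your treatment of Part (3) is essentially the paper's: the same rewrite of the covariance in terms of $\mc L_{\gamma_j}\cdots\mc L_{\gamma_{i+1}}(\tilde\psi_i\cdot\mc L^i_{\bo\gamma}\1)$, the same use of cone contraction to get a geometric bound, and then Theorem~\ref{WalkerLLN}. (The only small difference is that you handle a general mean-zero H\"older observable via the $h\pm c\1$ decomposition, while the paper's Lemma~\ref{DecCorLLN} tacitly takes $\psi\in C(a,\nu)$ to keep the products $\psi\cdot\mc L^i_{\bo\gamma}\1$ inside the cone; your version is in fact a bit more careful.)

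For Part (2), however, you take a genuinely different route. The paper does \emph{not} telescope against the intermediate invariant densities $\phi_{\gamma_k}$. Instead it inserts a single intermediate point: it bounds $\theta_+(\mc L^n_{\bo\gamma}\phi,\phi_0)$ by the triangle inequality through $\mc L_{\gamma_n}\cdots\mc L_{\gamma_{n-\bar n}}\phi_0$, controls the first piece by cone contraction over the last $\bar n$ steps (both arguments land in $C(\lambda a,\nu)$ which has finite $\theta_{a,\nu}$-diameter), and controls the second piece by continuity of $\bo\gamma\mapsto\mc L^{\bar n}_{\bo\gamma}\phi_0$ at the constant sequence $(\hg,\ldots,\hg)$ (the generalisation of Viana's Proposition 2.14 to finitely many compositions). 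Your telescoping identity, imported from the proof of Part (2) of Theorem~\ref{thmA}, is also correct and in one respect neater: because the Hilbert-metric contraction gives each summand a factor $\Theta^{n-k+1}$, you can sum the series and avoid the two-step block induction (Step 1/Step 2) that Theorem~\ref{thmA}'s $L^1$ argument needs, where only the non-strict contraction (P3) is available. The price you pay — and the point that deserves the most attention when you actually write it out — is that you must turn $\sup$-norm closeness of $\phi_{\gamma_{k-1}}$ and $\phi_{\gamma_k}$ (which is what Part (1) delivers) into smallness of the Hilbert distance $\theta_{a,\nu}(\phi_{\gamma_{k-1}},\phi_{\gamma_k})$ with which the contraction works. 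This is where the slack $\lambda<1$ in $\mc L_\gamma(C(a,\nu))\subset C(\lambda a,\nu)$ is essential: both densities lying in the strictly smaller cone $C(\lambda a,\nu)$, together with uniform two-sided pointwise bounds and $\sup$-norm closeness, keeps the ratio $(\exp(ad(x,y)^\nu)\phi_{\gamma_{k-1}}(x)-\phi_{\gamma_{k-1}}(y))/(\exp(ad(x,y)^\nu)\phi_{\gamma_k}(x)-\phi_{\gamma_k}(y))$ close to $1$ and hence $\theta_{a,\nu}$ small. You flagged the Hilbert-to-$\sup$-norm conversion (the constant $K$) as the technical heart; this conversion in the opposite direction for the $\phi_{\gamma_k}$'s is a second such step, and it is precisely what the paper's two-term argument avoids by never introducing the $\phi_{\gamma_k}$ at all. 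Both routes succeed; yours trades the block induction for two cone-versus-norm comparisons, the paper trades a continuity lemma for neither.
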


\begin{remark}
Recalling that the unique a.c.i.p. density for a $C^{1+\nu}$ expanding map on compact manifold is uniformly bounded and bounded away from zero, the convergence in the $\sup$-norm stated above implies that after a finite number of iterates, any regular density will remain bounded and bounded away from zero under any sufficiently small nonautonomous composition of perturbations of such a map.
\end{remark}

\begin{remark}
Keller \cite{KellPro} obtained a result $L^1$-analogue of  inequality (\ref{eq:ThmA})  for piecewise expanding interval maps. 
In our proof of Theorem B we use the well-known cone approach (explained below). In our setting, since we assume the maps are  $C^{1+
\nu}$, the cone approach is more direct and gives the stronger uniform (rather than $L^1$) estimates.
\end{remark}

\subsection{Strategy of the proof of Theorem B}

The main ingredient in the  proof of Theorem B is the  standard cone approach (\cite{Liv, Liv4,ViaSdds}). 
For sufficiently small perturbations of a given expanding map, the associated transfer operators leave a cone of strictly positive continuous functions with H\"older logarithm invariant. Endowing the cone with the Hilbert metric, one is able to prove that it has finite diameter and thus the restrictions of the transfer operators to the cone are contraction with respect to this metric and their contracting rates are uniformly bounded away from 1. This immediately implies memory loss for initial distributions of states belonging to the cone. The main claim is implied by a combination of this last result with continuity of the map $t\mapsto \mc L_t\phi$  for every fixed function $\phi$ inside the cone. We preliminarily define the transfer operator, cones of functions and the Hilbert metric in Section \ref{Sec:prel} to state results of existence of a.c.i.p. measures in Section \ref{Smooacipm} and stochastic stability in Section \ref{SecSmoStatStab}. We conclude with the proof of the result in the final section.

\label{ThmAproof}
\subsection{Preliminaries}\label{Sec:prel}

In this section, we review some of the main concepts and techniques to study invariant measure and statistical properties of regular expanding maps that will lead to the proof of Theorem \ref{thmB}  given in Section \ref{ProofNonatres}.

\subsubsection{Transfer Operator}\label{Sec:TranOper}
Given a measurable space $(M,\mc B, m)$ with a nonsingular transformation $F$ one can define two operators on the spaces $L^{\infty}(M)$ and $L^1(M)$ (the specification of the measure $m$ is omitted whenever there is no risk of confusion), respectively $U:L^\infty(M)\rightarrow L^\infty(M)$
$$
U(\phi)=\phi\circ F
$$
and its adjoint $\mc L:L^1(M)\rightarrow L^1(M)$, that satisfies for all $\phi \in L^\infty(M)$ and $\psi\in L^1(M)$
\begin{equation}\label{CharacPropTranOp}
\int_M U\phi\cdot\psi dm=\int_M\phi\cdot\mc L\psi dm.
\end{equation}
$\mc L$ is the transfer (or Perron-Frobenius) operator, and has the following properties:
\begin{align}
& \text{positivity: }  &\psi\geq0& \Rightarrow \mc L_{F}\psi\geq 0 \tag{P1}\\
& \text{preserves integrals: } &\int \mc L_F \psi \diff m&=\int \psi \diff m \tag{P2} \\
& \text{contraction property: } &|\mc L_F\psi|_1&\leq |\psi|_1 \tag{P3} \label{P3}\\
& \text{composition property: }& \mc L_{F\circ G}&=\mc L_G\circ \mc L_F\tag{P4}
\end{align}
for any $\psi\in L^1(M)$.

The transfer operator has proven to be an invaluable tool to deduce statistical properties of dynamical systems (such as existence of invariant measures, decay of correlations, and central limit theorems  for Birkhoff sums \cite{Liv, Bal, BoyGor,LuzMel,ViaSdds}) and their perturbations (\cite{Kel,KellPro,BalYou,ViaSdds}). For example, the probability density fixed by the transfer operator are the densities of the invariant absolutely continuous probability measures, and likewise, for any $\mu=\phi m$ a.c.i.p. measure, $\phi$ is a fixed point for the transfer operator. The transfer operator also prescribes the evolution of the densities.

For maps satisfying Definition \ref{ExpSmoothMap}, each point of of $M$ has the same finite number of preimages under $F$ ($M$ is compact and connected), and the transfer operator for these maps can be written as
$$
\mc L \phi(x)=\sum_{i=1}^k\phi(y_i)\cdot |\det D_{y_i}F|^{-1},\quad\quad F^{-1}(x)=\{y_1,...,y_k\}.
$$

\subsubsection{Projective Metric}

One of the main tools to investigate properties of the transfer operator is the Hilbert Projective metric \cite{Liv, Liv4,ViaSdds}.

Given a linear vector space $E$, a cone of $E$ is a subset $C\subset E\backslash\{0\}$ such that if $v\in C$ then $sv\in C$ for all $s>0$. We consider  convex cones, namely those  that satisfy
$$
s_1 v_1+s_2 v_2\in C\quad\quad \forall s_1,s_2>0\mbox{ and }\forall v_1,v_2\in C.
$$    
There is a canonical way to define a pseudo metric on every cone. Let
$$
\alpha(v_1,v_2):=\sup\{s>0:\mbox{ }v_2-sv_1\in C\},
$$
$$
\beta(v_1,v_2):=\inf\{s>0:\mbox{ } sv_1-v_2\in C\},
$$
and 
$$
\theta(v_1,v_2):=\log{\frac{\beta(v_1,v_2)}{\alpha(v_1,v_2)}}
$$
with the logarithm extended to a function of $[0,+\infty]$. $\theta$ has the following properties (see \cite{ViaSdds} for a proof):
\begin{itemize}
\item[(i)] $\theta(v_1,v_2)=\theta(v_2,v_1)\mbox{, }\forall v_1,v_2\in C$,
\item[(ii)] $\theta(v_1,v_3)\leq\theta(v_1,v_2)+\theta(v_2,v_3)\mbox{, }\forall v_1,v_2,v_3\in C$,
\item[(iii)] $\theta(v_1,v_2)=0$ if and only if there is $s>0$ s.t. $v_1=sv_2$.
\end{itemize}

 Point (iii) implies that $\theta$ distinguishes directions only, and for this reason it is called projective metric. 
\begin{remark}
Looking closely at $\alpha$ and $\beta$, one can notice that it is necessary to evaluate one of the two only. In particular
\begin{equation}\label{alpbetrel}
\alpha(v_1,v_2)=\frac{1}{\beta(v_2,v_1)}\quad \forall v_1,v_2\in C
\end{equation}
which also implies property (i) of $\theta$.
\end{remark}

\begin{remark}
The projective metric $\theta:=\theta_C$ depends on the cone on which is defined. Vectors belonging to the intersection of two different cones of the same vector space might have different projective distances wether considered as vectors of the first cone or of the second.
\end{remark}

Given cones $C_1\subset C_2$, with projective metrics $\theta_1$ and $\theta_2$, one has
\begin{equation}\label{projmetrconsubdec}
\theta_2(v_1,v_2)\leq\theta_1(v_1,v_2),\quad \forall v_1,v_2\in C_1,
\end{equation}
which tells us that the projective distance between two vectors decreases after enlarging the cone.
We now present two examples that will be useful in what follows.  

\begin{example} The cone of strictly positive continuous functions is
$$
C_+:=\{\phi\in C^{0}(M,\R)\mbox{ s.t. } \phi>0\}.
$$
We compute the projective metric.
\begin{align*}
\alpha_+(\phi_1,\phi_2) &=\sup\{t>0:\mbox{ } \phi_2(x)-t\phi_1(x)>0,\quad\forall x\in M\}\\
&=\sup\{t>0:\mbox{ } t<\phi_2(x)/\phi_1(x),\quad \forall x\in M\}
\end{align*}
which gives $\alpha_+(\phi_1,\phi_2)=\inf_{x\in M}\phi_2(x)/\phi_1(x)$.
From \eqref{alpbetrel}, $\beta_+(\phi_1,\phi_2)=\sup_{x\in M}\phi_2(x)/\phi_1(x)$, and
$$
\theta_+(\phi_1,\phi_2)=\log\sup\left\{\frac{\phi_2(x)\phi_1(y)}{\phi_1(x)\phi_2(y)}\mbox{ s.t. }x,y\in M\right\}.
$$
\end{example}

\begin{example}\label{ExHoldCon} For every $a>0$ and $\nu\in(0,\nu)$, consider the following cone
\begin{equation}\label{HoldLogCon}
C(a,\nu):=\left\{\phi\in C_+\mbox{ s.t. }d(x_1,x_2)\leq\rho_0\Rightarrow \phi(x_1)\leq\exp(ad(x_1,x_2)^\nu)\phi(x_2)\right\}
\end{equation}
which is the cone of strictly positive continuous functions with $\log\phi$ locally $\nu-$H\"older. The computation of the projective metric $\theta_{a,\nu}$ on this cone can be obtained in a similar way as before (see \cite{ViaSdds} for details). In particular we have
\begin{equation*}
\alpha(\phi_1,\phi_2)=\inf\left\{\frac{\phi_2(x)}{\phi_1(x)},\frac{\exp(ad(x,y)^\nu) \phi_2(x)-\phi_2(y)}{\exp(ad(x,y)^\nu) \phi_1(x)- \phi_1(y)}\mbox{ s.t. }x,y\in M\mbox{ and }0<d(x,y)<\rho_0\right\}.
\end{equation*}
\end{example}

One usually consider cones instead of the whole linear space because the restriction of linear operators to invariant cones exhibits nice properties. For example, letting $E_1,E_2$ be two vector spaces, $L:E_1\rightarrow E_2$ a linear map, and $C_1,C_2$ two cones in $E_1$ and $E_2$ respectively such that $L(C_1)\subset C_2$, it is easy to verify that 
$$
\theta_2(L(v_1),L(v_2))\leq \theta_1(v_1,v_2) \quad \forall v_1,v_2\in C_1.
$$
Furthermore, if the image of $C_1$ has finite dimeter, then the restriction of the linear map to the cone is a contraction.
\begin{proposition}[\cite{ViaSdds}]\label{FinDimImpCont}

Let $L:E_1\rightarrow E_2$ be a linear map, and $C_1\subset E_1$ a cone. If 
$$
D:=\sup\{\theta_2(L(v_1),L(v_2)) \mbox{ s.t. }v_1,v_2\in C_1 \}
$$ 
is finite, then
$$
\theta_2(L(v_1),L(v_1))\leq q\theta_1(v_1,v_2)
$$
with $q=(1-e^{-D})$.
\end{proposition}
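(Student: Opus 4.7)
The plan is to apply the classical Birkhoff cone-contraction argument. Fix $v_1, v_2 \in C_1$. If $\theta_1(v_1, v_2) = 0$ then by property (iii) the vectors $v_1$ and $v_2$ are proportional, hence so are $L(v_1)$ and $L(v_2)$, and the conclusion is immediate; so I assume $\theta_1(v_1, v_2) > 0$ and set $\alpha := \alpha_1(v_1, v_2)$ and $\beta := \beta_1(v_1, v_2)$, giving $\theta_1(v_1, v_2) = \log(\beta/\alpha)$. For any $\alpha' < \alpha$ and $\beta' > \beta$, by the definitions of $\alpha_1$ and $\beta_1$, the vectors $u := v_2 - \alpha' v_1$ and $u' := \beta' v_1 - v_2$ both lie in $C_1$.

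Next I would exploit the finite-diameter hypothesis. Since $L(u), L(u') \in L(C_1)$, we have $\theta_2(L(u), L(u')) \leq D$, so there exist $0 < a \leq b$ with $b/a \leq e^D$ such that both $L(u') - aL(u) \in C_2$ and $bL(u) - L(u') \in C_2$. Expanding these two inclusions using linearity of $L$ and the definitions of $u, u'$ yields
\[
(\beta' + a\alpha')\,L(v_1) - (1+a)\,L(v_2) \in C_2, \qquad (1+b)\,L(v_2) - (b\alpha' + \beta')\,L(v_1) \in C_2,
\]
which translate directly into the estimates
\[
\beta_2(L(v_1), L(v_2)) \leq \frac{\beta' + a\alpha'}{1+a}, \qquad \alpha_2(L(v_1), L(v_2)) \geq \frac{b\alpha' + \beta'}{1+b}.
\]
Passing to the limits $\alpha' \to \alpha$ and $\beta' \to \beta$ gives
\[
\theta_2(L(v_1), L(v_2)) \leq \log \frac{(\beta + a\alpha)(1+b)}{(\beta + b\alpha)(1+a)}.
\]

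The final step, which I expect to be the main algebraic obstacle, will be to bound the right-hand side by $(1-e^{-D})\log(\beta/\alpha)$ using only the constraint $b/a \leq e^D$. Setting $r := \beta/\alpha > 1$, I would optimize the RHS over admissible pairs $(a, b)$ with $a > 0$ and $a \leq b \leq ae^D$. A direct calculus argument (differentiating in $a$ after substituting $b = ae^D$) pins down the worst case at $a = \sqrt{r}\,e^{-D/2}$, yielding the explicit bound $2\log\bigl[(1 + \sqrt{r}\,e^{D/2})/(\sqrt{r} + e^{D/2})\bigr]$. Comparing this against $(1-e^{-D})\log r$ closes the proof: this last inequality reduces to a routine hyperbolic-function estimate, and in fact a sharper constant of Hopf-Birkhoff type could be extracted along the way, with $q = 1-e^{-D}$ obtained as a convenient looser bound.
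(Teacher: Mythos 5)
Your proposal follows the classical Birkhoff cone-contraction argument --- form $u, u' \in C_1$, apply the finite-diameter hypothesis to $L(u), L(u')$ to extract $a, b$ with $b/a \le e^D$, expand by linearity, and translate back into bounds on $\alpha_2$ and $\beta_2$ --- which is precisely the route taken in the cited reference \cite{ViaSdds}. The hyperbolic estimate you defer does close as you anticipate: with $r=\beta/\alpha$ your optimized bound $2\log\tfrac{1+\sqrt{r}\,e^{D/2}}{\sqrt{r}+e^{D/2}}$ equals $4\tanh^{-1}\bigl(\tanh\tfrac{\log r}{4}\,\tanh\tfrac{D}{4}\bigr)$, and the elementary inequality $\tanh^{-1}(xy)\le y\,\tanh^{-1}(x)$ (for $x\in[0,1)$, $y\in[0,1]$) together with $\tanh(D/4)=\tfrac{e^{D/2}-1}{e^{D/2}+1}<\tfrac{e^D-1}{e^D}=1-e^{-D}$ yields the stated (slightly suboptimal) constant.
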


\subsubsection{Absolutely Continuous Invariant Probability Measure}\label{Smooacipm}

The above machinery has been used to prove existence of an invariant absolutely continuous probability measure for the class of maps introduced in Definition \ref{ExpSmoothMap}. 
 \begin{proposition}[\cite{ViaSdds}]
Let $F$ be a map as in Definition \ref{ExpSmoothMap} , and let $\mc L$ be the associated transfer operator. Then, for all sufficiently large $a>0$, for $0<\nu<\nu$, and for all $\lambda\in(\sigma,1)$  
$$
\mc L(C(a,\nu))\subset C(\lambda a,\nu).
$$

Moreover, the diameter 
\begin{equation}\label{diametercone}
D_{\lambda a,\nu}:=\sup\{\theta_{a,\nu}(\phi_1,\phi_2) \mbox{ s.t. } \phi_1,\phi_2\in C(\lambda a,\nu)\}
 \end{equation}
 is finite for every $a>0$, $\nu>0$, $0<\lambda<1$.
\end{proposition}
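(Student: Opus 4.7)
The plan is to estimate the ratio $\mc L\phi(x_1)/\mc L\phi(x_2)$ for $\phi \in C(a,\nu)$ and nearby $x_1, x_2$. Since $F$ is a $C^{1+\nu}$-expanding map on a compact connected manifold, it is a finite covering, so one can fix inverse branches $F_i^{-1}$ ($i=1,\dots,k$) defined on a neighbourhood of $x_2$ and set $y_i^{(j)} := F_i^{-1}(x_j)$; the expansion hypothesis yields $d(y_i^{(1)}, y_i^{(2)}) \leq \sigma\, d(x_1, x_2)$. Using the elementary inequality $\sum a_i/\sum b_i \leq \max_i a_i/b_i$ for positive reals, combined with the cone inequality $\phi(y_i^{(1)}) \leq \exp(a\, d(y_i^{(1)}, y_i^{(2)})^\nu)\, \phi(y_i^{(2)})$ and with the $\nu$-H\"older regularity of $\log|\det DF|$ (local constant $K$), one obtains
$$
\frac{\mc L\phi(x_1)}{\mc L\phi(x_2)} \leq \exp\bigl((a + K)\,\sigma^\nu\, d(x_1, x_2)^\nu\bigr).
$$
To close the bound one needs $(a+K)\sigma^\nu \leq \lambda a$; since $\lambda > \sigma$ and $\nu$ can be chosen so that $\sigma^\nu < \lambda$, this is achieved by taking $a \geq K\sigma^\nu/(\lambda - \sigma^\nu)$, giving the inclusion $\mc L(C(a,\nu)) \subset C(\lambda a,\nu)$.

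\textbf{Finite diameter.} The strategy is to exploit the slack $\lambda < 1$: functions in $C(\lambda a,\nu)$ satisfy a strictly stronger H\"older condition than the one defining the ambient metric $\theta_{a,\nu}$. Fix $\phi_1,\phi_2 \in C(\lambda a,\nu)$. For the first quotient $\phi_2/\phi_1$ entering the formula of $\alpha_{a,\nu}$ (cf.\ Example \ref{ExHoldCon}), continuity and strict positivity on compact $M$, propagated globally by chains of $\rho_0$-close points using the H\"older bound with constant $\lambda a$, yield uniform lower and upper bounds on $\phi_2/\phi_1$ depending only on $a,\lambda,\nu,M$. For the second type of quotient, the key observation is
$$
e^{a\,d(x,y)^\nu}\phi_j(x) - \phi_j(y) \;\geq\; \phi_j(x)\bigl(e^{a\,d(x,y)^\nu} - e^{\lambda a\,d(x,y)^\nu}\bigr) \;>\; 0
$$
for $0 < d(x,y) \leq \rho_0$, so the denominator is strictly positive with a quantitative lower bound in terms of $\phi_j(x)$ alone. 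Dividing numerator and denominator by $\phi_1(x)$ and using the already established bound on $\phi_2/\phi_1$ together with the uniform gap above, one obtains a uniform upper bound on this quotient as well, and a matching lower bound follows by the symmetry $\beta_{a,\nu}(\phi_1,\phi_2) = 1/\alpha_{a,\nu}(\phi_2,\phi_1)$. Combining gives a finite upper bound on $\log(\beta_{a,\nu}/\alpha_{a,\nu})$ that is independent of $\phi_1,\phi_2 \in C(\lambda a,\nu)$.

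\textbf{Main obstacle.} The delicate part is tracking how the strict inequality $\lambda < 1$ translates into a \emph{uniform} positive gap $e^{a\,d(x,y)^\nu} - e^{\lambda a\,d(x,y)^\nu}$ that is not eroded after normalisation by $\phi_j(x)$ and after taking the infimum/supremum over all pairs $(x,y)$ with $0 < d(x,y) \leq \rho_0$. The estimate degenerates as $d(x,y) \to 0$ (both sides of the gap tend to one), so one must exploit the fact that at small scales both numerator and denominator also vanish at a controlled rate, and the relevant ratio therefore remains bounded. Once these uniform estimates are established, the conclusion follows by a direct computation of the diameter.
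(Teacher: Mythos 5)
The paper states this proposition as a cited result from \cite{ViaSdds} and does not reproduce a proof, so there is no in-paper argument to compare against; but your proposal follows the standard cone/Hilbert-metric argument from Viana's notes, and it is essentially the right proof. The cone-invariance estimate
$\mc L\phi(x_1)/\mc L\phi(x_2)\le \exp\bigl((a+K)\sigma^\nu d(x_1,x_2)^\nu\bigr)$
via $\sum a_i/\sum b_i\le\max_i a_i/b_i$, the cone bound on $\phi(y_i^{(1)})/\phi(y_i^{(2)})$, and the H\"older control on $\log|\det DF^{-1}|$ is exactly the standard computation. For finite diameter, your key inequality
$e^{a\,d(x,y)^\nu}\phi_j(x)-\phi_j(y)\ge\phi_j(x)\bigl(e^{a\,d(x,y)^\nu}-e^{\lambda a\,d(x,y)^\nu}\bigr)$,
exploiting the slack $\lambda<1$, is the correct mechanism, and your reading of the ``main obstacle'' is accurate: the gap vanishes like $(1-\lambda)a\,d(x,y)^\nu$ while the companion upper bound $e^{a\,d(x,y)^\nu}-e^{-\lambda a\,d(x,y)^\nu}$ vanishes like $(1+\lambda)a\,d(x,y)^\nu$, so the quotient of the ``difference'' terms stays uniformly bounded (tending to $(1+\lambda)/(1-\lambda)$ as $d(x,y)\to 0$); combined with the Harnack-type bound on $\sup\phi_j/\inf\phi_j$ obtained by chaining $\rho_0$-close points across the compact connected manifold, this yields the finite diameter.

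Two small cautions. First, your parenthetical ``$\nu$ can be chosen so that $\sigma^\nu<\lambda$'' is backwards: $\sigma^\nu$ \emph{decreases} in $\nu$ (for $\sigma\in(0,1)$), so shrinking the cone's H\"older exponent makes the condition harder, not easier, and one is anyway capped by the regularity exponent of $\log|\det DF|$. The clean statement is that the inclusion requires $\lambda\in(\sigma^\nu,1)$, which is a strictly smaller interval than $(\sigma,1)$ when $\nu<1$; the paper's ``$\lambda\in(\sigma,1)$'' together with the evident typo ``$0<\nu<\nu$'' is slightly loose on this point, and you inherit that imprecision rather than fixing it. Second, the finite-diameter argument as you present it stops at the identification of the obstacle and a correct sketch of its resolution, but does not carry out the matched upper bound on the numerator and the final assembly of a uniform bound on $\log(\beta_{a,\nu}/\alpha_{a,\nu})$; these steps are routine given what you have, but a complete proof would spell them out.
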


This proposition along with Proposition \ref{FinDimImpCont} imply that the action of $\mc L $ contracts directions inside the cone $C(a,\nu)$. To get a step closer to obtaining a fixed point, we can restrict to the subset of normalised densities in $C(a,\nu)$,
$$
\tilde C(a,\nu):=\{\phi\in C(a,\nu)\mbox{ s.t. }\int\phi(x)dm(x)=1\},
$$ 
and show that $\mc L$ is a contraction of this space with respect to the restriction of the projective metric.
\begin{proposition}
The following holds:
\begin{itemize}
\item[(i)] The restriction of $\theta_{a,\nu}$ to $\tilde C(a,\nu)$ is a metric,
\item[(ii)] $\mc L(\tilde C(a,\nu))\subset \tilde{C}(\lambda a,\nu)$.
\end{itemize}
\end{proposition}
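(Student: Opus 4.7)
The proposition has two parts and both follow quite directly from results already established earlier in this section, so I do not expect a serious obstacle; the content is essentially verifying that the normalization constraint $\int \phi \, dm = 1$ behaves well with respect to both the Hilbert metric and the transfer operator.

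For part (i), my plan is to check the metric axioms on $\tilde C(a,\nu)$ using the three properties of the Hilbert projective pseudo-metric stated just after its definition. Symmetry $\theta_{a,\nu}(\phi_1,\phi_2) = \theta_{a,\nu}(\phi_2,\phi_1)$ and the triangle inequality are inherited directly from properties (i) and (ii) of $\theta$, since $\tilde C(a,\nu) \subset C(a,\nu)$. Non-negativity is immediate from the fact that $\alpha \leq \beta$ on a genuine convex cone. The only axiom requiring the normalization hypothesis is the identity of indiscernibles: by property (iii) of $\theta$, vanishing of $\theta_{a,\nu}(\phi_1,\phi_2)$ forces $\phi_1 = s\phi_2$ for some $s>0$; integrating both sides against $m$ and using $\int \phi_1\, dm = \int \phi_2\, dm = 1$ yields $s=1$, so $\phi_1 = \phi_2$. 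This is where normalization is essential, since on the unnormalized cone $\theta$ only separates rays, not individual functions.

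For part (ii), I would simply combine two already-available facts. The previous proposition in Section \ref{Smooacipm} gives $\mc L(C(a,\nu)) \subset C(\lambda a,\nu)$, so certainly $\mc L \phi \in C(\lambda a,\nu)$ whenever $\phi \in \tilde C(a,\nu) \subset C(a,\nu)$. Next, by property (P2) of the transfer operator,
$$
\int_M \mc L \phi \, dm = \int_M \phi \, dm = 1,
$$
so $\mc L\phi$ is again a probability density. Together these two observations yield $\mc L \phi \in \tilde C(\lambda a,\nu)$, which is the desired inclusion.

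The only conceptual subtlety in the whole argument is that we are gaining the separation axiom on $\tilde C(a,\nu)$ by quotienting out (rather, fixing) the positive scalar degree of freedom via the affine constraint $\int \phi\,dm=1$; combined with Proposition \ref{FinDimImpCont} and the finite diameter \eqref{diametercone}, this sets up $\mc L$ as a genuine strict contraction on a complete metric space, which is the mechanism behind the subsequent existence, uniqueness and exponential decay results for the invariant density. No routine calculation is expected to be a difficulty here since the work has been done at the level of the unnormalized cone.
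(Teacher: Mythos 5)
Your proposal is correct and follows essentially the same route as the paper's (admittedly terse) proof: for part (i), the paper likewise argues that $\theta_{a,\nu}(\phi_1,\phi_2)=0$ forces $\phi_1=c\phi_2$ by property (iii) of the projective pseudo-metric and then that $c=1$ by the normalization constraint, and for part (ii) the paper simply invokes (P1) and (P2) together with the cone contraction $\mc L(C(a,\nu))\subset C(\lambda a,\nu)$ from the preceding proposition, exactly as you do. The one point the paper adds and you omit is the observation that $\theta_{a,\nu}$ takes only finite values on the relevant set (so one gets a genuine, not merely extended, metric); the paper attributes this to the finite-diameter statement~\eqref{diametercone}. This is worth a sentence in your write-up, though it is a side remark rather than a gap in the main argument.
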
  
\begin{proof}
(i): $ C(a,\nu)$ has finite dimeter, thus $\theta_{a,\nu}$ takes only finite values. $\theta_{a,\nu}(\phi_1,\phi_2)=0$ for $\phi_1,\phi_2\in \tilde C(a,\nu)$ implies $\phi_1=c\phi_2$, but $c$ must be equal to one so $\phi_1=\phi_2$.
(ii): implied by properties (P1) and (P2) of the transfer operator.
\end{proof}
This immediately gives the following corollary. 
\begin{corollary}
$\mc L$ is a contraction on the metric space $(\tilde C(a,\nu),\theta_{a,\nu})$.
\end{corollary}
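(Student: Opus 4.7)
My plan is to derive this corollary directly from Proposition \ref{FinDimImpCont}, using as input the two preceding propositions that bundle the necessary information: the invariance $\mc L(\tilde C(a,\nu))\subset \tilde C(\lambda a,\nu)$ and the finite-diameter estimate \eqref{diametercone}.

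Concretely, I would apply Proposition \ref{FinDimImpCont} with $E_1=E_2$ equal to the ambient function space, and with both cones in the statement taken to be $C(a,\nu)$ (equipped with the projective metric $\theta_{a,\nu}$). The relevant quantity to bound is
\[
D := \sup\{\theta_{a,\nu}(\mc L\phi_1,\mc L\phi_2) : \phi_1,\phi_2\in \tilde C(a,\nu)\}.
\]
Since $\mc L(\tilde C(a,\nu))\subset \tilde C(\lambda a,\nu) \subset C(\lambda a,\nu)$ by the previous proposition, any such pair $\mc L\phi_1,\mc L\phi_2$ lies in $C(\lambda a,\nu)$, and hence by the definition \eqref{diametercone} we obtain $D\leq D_{\lambda a,\nu}<\infty$. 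Proposition \ref{FinDimImpCont} then yields the desired contraction inequality
\[
\theta_{a,\nu}(\mc L\phi_1,\mc L\phi_2) \leq q\,\theta_{a,\nu}(\phi_1,\phi_2), \qquad q := 1-e^{-D_{\lambda a,\nu}} \in (0,1),
\]
valid for all $\phi_1,\phi_2 \in \tilde C(a,\nu)$. This is exactly the claim, since point (i) of the preceding proposition already guarantees that $\theta_{a,\nu}$ restricts to a bona fide metric (not just a pseudo-metric) on $\tilde C(a,\nu)$.

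There is no real obstacle here; the corollary is an essentially mechanical consequence of the two immediately preceding results. The only small subtlety worth pointing out is the distinction between $\tilde C(a,\nu)$ and $C(\lambda a,\nu)$: the image sits in the smaller cone, while the contraction estimate is stated with respect to the metric of the larger cone. This matches precisely the hypothesis format of Proposition \ref{FinDimImpCont}, since $D_{\lambda a,\nu}$ is defined using $\theta_{a,\nu}$ evaluated on pairs from $C(\lambda a,\nu)$, so no additional comparison of metrics via \eqref{projmetrconsubdec} is needed.
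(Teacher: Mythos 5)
Your proof is correct and follows exactly the route the paper intends (the paper gives no explicit proof, stating only that the two preceding propositions ``immediately give'' the corollary): one bounds the diameter of $\mc L(\tilde C(a,\nu))$ inside $C(\lambda a,\nu)$ by $D_{\lambda a,\nu}$, then invokes Proposition~\ref{FinDimImpCont} with $C_1=C_2=C(a,\nu)$ to obtain the contraction rate $q=1-e^{-D_{\lambda a,\nu}}$. The only detail worth stating explicitly, since ``contraction on the metric space $(\tilde C(a,\nu),\theta_{a,\nu})$'' also requires $\mc L$ to map the space into itself, is the inclusion $\tilde C(\lambda a,\nu)\subset \tilde C(a,\nu)$ (which holds because $\lambda<1$, so the defining H\"older condition with constant $\lambda a$ is stronger than with constant $a$); you cite the image being in $\tilde C(\lambda a,\nu)$ but do not close this last step.
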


Without getting into much details (which can be found in \cite{ViaSdds}), to produce a fixed point, it would  be sufficient to show that every normalised Cauchy sequence in $C(a,\nu)$ is convergent. The setback is that $(\tilde C(a,\nu),\theta_{a,\nu})$ is not complete. However, as it has already been pointed out, $C(a,\nu) $ is a subset of $C_+$. Normalised Cauchy sequences converge in this cone with respect to $\theta_+$, and the fact that $\theta_+(\phi_1,\phi_2)\leq\theta_{a,\nu}(\phi_1,\phi_2)$  for all $\phi_1,\phi_2\in C(a,\nu)$ (see equation \eqref{projmetrconsubdec}), implies the existence of a fixed point $\phi_0\in C_+$ which is an invariant density for the dynamical system $(M,\mc B, m, F)$. One is also able to prove that $\phi_0$ is in fact a function in $C(\lambda a,\nu)$ and that there exists constants $R>0$ and $\sigma_1\in(0,1)$ such that
\begin{equation}\label{ExpConvInvMeas}
\sup_{x\in M}|\mc L^n\phi(x)-\phi_0(x)|<R\sigma_1^n
\end{equation}
for all densities $\phi\in \tilde C(a,\nu)$.
This means that any distribution of mass $\mu=\phi m$ on $M$, for $\phi\in \tilde C(a,\nu)$,  will evolve exponentially fast towards the invariant distribution $\mu_0=\phi_0 m$ under the iteration of the map.
\begin{remark}\label{invariancconesalltrans}
Notice that taking a collection of $C^{1+\nu}$ perturbations $\{F_\gamma\}_{\Gamma}$, if $\delta>0$ is sufficiently small, then every $F{_\gamma}$ for $\gamma$ in the open ball $B_\delta(\hg) $ is a $C^{1+\nu}$ expanding map with uniform lower bound $\bar \sigma^{-1}$ on the rate of expansion, and on the H\"older constant. This implies that all the associated transfer operators map $C(a,\nu)$ into $C(\bar \lambda a,\nu)$, for some sufficiently large $a$ and $\bar\lambda\in(0,1)$. This implies that for all $\gamma \in B_{\delta}(\hg) $, $\mc L_\gamma$ has a fixed point $\phi_\gamma\in C(\bar \lambda a,\nu)$,  and therefore there is an invariant absolutely continuous probability measure for the perturbed map. The analogous of \eqref{ExpConvInvMeas} holds:
\begin{equation}\label{ExpConvInvMeasPert}
\sup_{x\in M}|\mc L_\gamma^n\phi(x)-\phi_\gamma(x)|<R\sigma_2^n
\end{equation}
for some $R>0$, $\sigma_2\in(0,1)$ and $ \forall \gamma\in B_\delta(\hg)$.
\end{remark}

\subsection{Stochastic Stability}

As for the piecewise case (Section \ref{SecSmoStatStab}), also for families of $C^{1+\nu}$ maps one can consider independent compositions of maps sampled according to some measure $\nu$ on $\Gamma$.  In this case, it is known (\cite{ViaSdds,BalYou}) that if the support of $\nu$ is sufficiently close to $\tau$, i.e. $\supp \mu\subset B_\delta(\tau)$ for sufficiently small $\delta>0$, then the averaged transfer operator has an invariant density $\phi_\mu$, and this invariant density converges uniformly to the invariant density $\phi_\tau$ whenever $\delta\rightarrow 0$.  $\phi_\mu$ is called a stationary density and it describes the asymptotic distribution for the random orbits for almost every initial condition (w.r.t. $\phi_\mu m$), and for almost every sequence $\{t_i\}_{i\in \N}$ (w.r.t. $\mu^{\otimes \N}$). The stochastic stability result asserts that stationary densities are uniformly close to the unperturbed invariant density, whenever the support of their measure is sufficiently close to $\tau$.
\begin{proposition}[\cite{ViaSdds}]\label{StocStabSmooth}
Given a probability measure $\mu$ on $T$, for every $\epsilon>0$ there is $\delta>0$, such that if $\supp\mu\subset B_\delta(\tau)$ then
$$
\sup_{x\in M}|\phi_\mu(x)-\phi_\tau(x)|<\epsilon.
$$
\end{proposition}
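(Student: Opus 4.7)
The plan is to mimic the argument for existence of the invariant density $\phi_\tau$ given in Section \ref{Smooacipm}, but applied to the averaged transfer operator $\hat{\mc L}_\mu = \int_\Gamma \mc L_\gamma\, d\mu(\gamma)$, and then compare the resulting fixed point with $\phi_\tau$ via a triangular inequality combining cone contraction with continuity of $\gamma \mapsto \mc L_\gamma \phi_\tau$. First, by Remark \ref{invariancconesalltrans}, there exist $a>0$, $\bar\lambda\in(0,1)$ and $\delta_0>0$ such that for every $\gamma\in B_{\delta_0}(\tau)$ one has $\mc L_\gamma(C(a,\nu))\subset C(\bar\lambda a,\nu)$. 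Since $C(a,\nu)$ is a convex cone, the same inclusion holds for any convex combination, and in particular $\hat{\mc L}_\mu(C(a,\nu))\subset C(\bar\lambda a,\nu)$ whenever $\supp\mu\subset B_{\delta_0}(\tau)$. The $\theta_{a,\nu}$-diameter $D_{\bar\lambda a,\nu}$ of $C(\bar\lambda a,\nu)$ is finite by \eqref{diametercone}, so Proposition \ref{FinDimImpCont} yields a uniform contraction factor $q=1-e^{-D_{\bar\lambda a,\nu}}\in(0,1)$, independent of $\mu$, for $\hat{\mc L}_\mu$ on $(\tilde C(a,\nu),\theta_{a,\nu})$.

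Arguing exactly as in Section \ref{Smooacipm}, iterating on any initial density in $\tilde C(a,\nu)$ produces a Cauchy sequence in the coarser cone metric $\theta_+$ on $C_+$, whose limit is a strictly positive continuous function $\phi_\mu\in C(\bar\lambda a,\nu)$ fixed by $\hat{\mc L}_\mu$. Uniqueness of $\phi_\mu$ inside $\tilde C(a,\nu)$ follows from the strict contraction.

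To compare $\phi_\mu$ with $\phi_\tau$, both lie in $\tilde C(\bar\lambda a,\nu)$, which has finite diameter $D_{\bar\lambda a,\nu}$. Since $\phi_\mu=\hat{\mc L}_\mu^n\phi_\mu$ and $\phi_\tau=\mc L_\tau^n\phi_\tau$, the triangular inequality gives, for any $n\in\N$,
\begin{equation*}
\sup_{x\in M}|\phi_\mu(x)-\phi_\tau(x)|\leq \sup_{x\in M}|\hat{\mc L}_\mu^n\phi_\mu(x)-\hat{\mc L}_\mu^n\phi_\tau(x)|+\sup_{x\in M}|\hat{\mc L}_\mu^n\phi_\tau(x)-\mc L_\tau^n\phi_\tau(x)|.
\end{equation*}
The first term is controlled by the uniform contraction: $\theta_{a,\nu}(\hat{\mc L}_\mu^n\phi_\mu,\hat{\mc L}_\mu^n\phi_\tau)\leq q^n D_{\bar\lambda a,\nu}$, and the standard comparison between the Hilbert projective metric and the sup norm on normalised densities in $C(\bar\lambda a,\nu)$ (which are uniformly bounded above and away from $0$) converts this into a sup-norm estimate of the form $C q^n$, with $C$ depending only on $a,\nu,M$. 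Choose $n=n(\epsilon)$ so that $Cq^n<\epsilon/2$.

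With $n$ now fixed, I would reduce the second term by decreasing $\delta$. Continuity of $\gamma\mapsto F_\gamma$ in the $C^{1+\nu}$ topology at $\tau$ implies continuity of $\gamma\mapsto \mc L_\gamma\phi_\tau$ in sup norm (both $\phi_\tau$ and the inverse branches' Jacobians are continuous in $\gamma$ uniformly on $M$). Hence for any $\eta>0$ one can find $\delta\in(0,\delta_0)$ so that $\sup_{x\in M}|\mc L_\gamma\phi_\tau(x)-\mc L_\tau\phi_\tau(x)|<\eta$ for all $\gamma\in B_\delta(\tau)$, and by averaging, $\sup|\hat{\mc L}_\mu\phi_\tau-\mc L_\tau\phi_\tau|<\eta$ whenever $\supp\mu\subset B_\delta(\tau)$. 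Telescoping $\hat{\mc L}_\mu^n-\mc L_\tau^n=\sum_{k=0}^{n-1}\hat{\mc L}_\mu^{n-1-k}(\hat{\mc L}_\mu-\mc L_\tau)\mc L_\tau^k$ and using that each $\hat{\mc L}_\mu^{j}$ is bounded on $C(a,\nu)$ (it preserves sup-norm on positive functions by property (P2) and a uniform H\"older bound from the cone), one obtains an upper bound of the form $n K\eta$, which by choosing $\eta<\epsilon/(2nK)$ is made smaller than $\epsilon/2$. The main difficulty is simply to verify that these bounds are uniform in $\mu$; once uniformity in $\gamma\in B_\delta(\tau)$ of the $C^{1+\nu}$-continuity of $\mc L_\gamma$ is in hand, averaging over $\mu$ preserves every estimate. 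Combining the two estimates yields $\sup|\phi_\mu-\phi_\tau|<\epsilon$, as desired.
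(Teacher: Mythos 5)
The paper does not actually prove Proposition~\ref{StocStabSmooth}; it is stated as a cited result from \cite{ViaSdds}, so there is no in-paper argument to compare line by line. What the paper does contain is a proof of the analogous statement in the piecewise setting, namely Part~(1) of Theorem~\ref{thmA}, and your argument is structurally the same: you write
$\phi_\mu-\phi_\tau=(\hat{\mc L}_\mu^n\phi_\mu-\hat{\mc L}_\mu^n\phi_\tau)+(\hat{\mc L}_\mu^n\phi_\tau-\mc L_\tau^n\phi_\tau)$,
control the first difference by the uniform contraction on the cone, and control the second by making $\delta$ small after fixing $n$. This is exactly the decomposition in \eqref{PertStabInvDen}, translated from the quasi-H\"older/spectral-gap language into the $C(a,\nu)$/Hilbert-metric language appropriate to the appendix, so your proposal is consistent with the machinery the paper itself sets up (Remark~\ref{invariancconesalltrans}, Proposition~\ref{FinDimImpCont}, \eqref{diametercone}).

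The only genuine deviation is how the second term is bounded. The paper invokes a packaged perturbation estimate (the analogue of Proposition~\ref{MultiPertRes}, here Proposition~2.14 of \cite{ViaSdds} which is quoted in the proof of Part~(2) of Theorem~\ref{thmB}) directly giving, for each fixed $n$, a bound that can be made arbitrarily small by shrinking $\delta$. You instead re-derive such an estimate by telescoping $\hat{\mc L}_\mu^n-\mc L_\tau^n$, which is perfectly valid and more self-contained; the resulting factor $nK\eta$ grows in $n$, so the order of quantifiers (first $n$, then $\delta$) is essential, and you handle that correctly. One small inaccuracy worth flagging: you appeal to property (P2) to get a sup-norm bound for $\hat{\mc L}_\mu^{j}$, but (P2) only preserves integrals. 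The sup-norm bound actually comes from positivity of the transfer operator plus the fact that $\hat{\mc L}_\mu^{j}\mathds 1$ lies in $\tilde C(\bar\lambda a,\nu)$ for $j\geq 1$, hence is uniformly bounded, so that $\|\hat{\mc L}_\mu^{j}g\|_\infty\leq\|\hat{\mc L}_\mu^{j}\mathds 1\|_\infty\,\|g\|_\infty$. With that correction the argument is complete.
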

\begin{remark}
Notice that a particular case of the above setting is when $\mu=\delta_t$ is the singular probability measure concentrated at the point $t\in T$. In this case, $\hat {\mc L}_\mu$ equals $\mc L_t$, and the above results imply that for $t$ sufficiently close to $\tau$, $F^{(t)}$ has an absolutely continuous invariant probability measure with density $\phi_t$, and $\phi_t\rightarrow \phi_\tau$ uniformly for $t\rightarrow \tau$.
\end{remark}

The bound \eqref{ExpConvInvMeasPert} implies that the evolution of densities under the iterated action of a perturbed map $F_\gamma$ converges exponentially fast to the invariant density $\phi_\gamma$. Proposition \ref{StocStabSmooth}  with the above remark imply that for small perturbations, under iteration of some map $F_\gamma$, densities in $\tilde C(a,\nu)$, evolve close to $\phi_\hg$.

\subsection{Proof of Theorem B}\label{ProofNonatres}\label{ThmAproof}

Part (1) of Theorem B is given by Proposition \ref{StocStabSmooth}. Now we prove part (2) that, in contrast with the stationary case, tells us what happens to densities when we apply perturbed versions of the map $F$ without requiring any kind of independence of the perturbations.

\begin{proof}[Proof of part (2) of Theorem \ref{thmB}]

Thanks to property \eqref{CharacPropTranOp} we can restate the theorem in terms of the action of the transfer operator on the densities. Thus we need to prove that for $\{F_{\gamma}\}_{\gamma\in \Gamma}$ as in the hypotheses, denoted with 
$$
\mc L^n_{\bo \gamma}:=\mc L_{\gamma_n}...\mc L_{\gamma_1}
$$
the transfer operator of the composition $F^n_{\bo \gamma}:=F_{\gamma_n}\circ...\circ F_{\gamma_1}$, for every $\epsilon>0$ there exists $\delta(\epsilon)>0$ and $\bar n(\epsilon)\in\N$ such that for every $\phi\in\tilde C(a,\nu)$, every $n>\bar n$ and $\bo \gamma\in B_\delta(\hg)^n$

$$
\mc L^n_{\bo \gamma}\phi\in \tilde C(a,\nu)\quad\mbox{ and }\quad\sup_{x\in M}|\mc L^n_{\bo \gamma}\phi(x)-\phi_0(x)|<\epsilon.
$$
$\mc L^n_{\bo \gamma}\phi\in \tilde C(a,\nu)$ follows from Remark \ref{invariancconesalltrans}.  

Then, we recall that, for every $\epsilon'>0$, $\phi\in C(a,\nu)$, and every $n\in\N$ there exists $\delta(\epsilon',\phi,n)>0$ such that if $\bo{\gamma}\in B_\delta(\tau)^{n}$ 
$$
\theta_+(\mc L_{\bo \gamma}^n\phi,\mc L^n\phi)\leq \epsilon'
$$
This is proven in Proposition 2.14 of \cite{ViaSdds} for the case $n=1$, and can be generalised to any finite $n\in\N$.

If $\gamma\in B_\delta(\hg)$ with $\delta>0$ sufficiently small, for some $a>0$ and $0<\nu<1$, $\mc L_\gamma$ is a contraction of $(\tilde C(a,\nu),\theta_{a,\nu})$ with contracting constant $q\in(0,1)$ independent of $\gamma$. Fix $\epsilon'>0$, and choose $\bar n\in \N$ so that $q^{\bar n} D_{\lambda a,\nu}<\epsilon'/2$, with $D_{\lambda a,\nu}$ diameter of $C(a,\nu)$ (see Eq. \eqref{diametercone}), and $\delta$ less than $\delta(\epsilon'/2,\phi_0,\bar n)$ above, so that $\theta_+(\mc L_{\bo \gamma}^{\bar n}\phi_0,\phi_0)<\epsilon'/2$ for all $\bo \gamma\in B_{\delta}(\hg)^{\bar n}$. This implies that for any $n>\bar n$ and any $\bo \gamma\in B_\delta(\hg)^{\bar n}$
\begin{align}
\theta_+(\mc L^n_{\bo \gamma}\phi, \phi_0)&\leq \theta_+(\mc L_{\bo \gamma}^n\phi,\mc L_{\gamma_n}...\mc L_{\gamma_{n-\bar n}}\phi_0)+\theta_+(\mc L_{\gamma_n}...\mc L_{\gamma_{n-\bar n}}\phi_0, \phi_0)\label{ResSmoINeq1}\\
&\leq \theta_{a,\nu}(\mc L_{\bo \gamma}^n\phi,\mc L_{\gamma_n}...\mc L_{\gamma_{n-\bar n}}\phi_0)+\theta_+(\mc L_{\gamma_n}...\mc L_{\gamma_{n-\bar n}}\phi_0, \phi_0)\label{ResSmoINeq2}\\
&\leq \theta_{a,\nu}(\mc L_{\gamma_n}...\mc L_{\gamma_{n-\bar n}}\mc L_{\gamma_{n-\bar n-1}}...\mc L_{\gamma_1}\phi,\mc L_{(\gamma_n,...,\gamma_{n-\bar n})}\phi_0)+ \epsilon'/2\\
&\leq q^{\bar n}D_{\lambda a,\nu}+ \epsilon'/2\label{ResSmoINeq3}\\
&\leq \epsilon'\nonumber
\end{align}

Where \eqref{ResSmoINeq1} is just triangular inequality, \eqref{ResSmoINeq2} follows from inequality \eqref{projmetrconsubdec} that upper bound the metric $\theta_+$ with the metric $\theta_{a,\nu}$, and \eqref{ResSmoINeq3} follows from uniformity of the contraction rate, $q\in(0,1)$, for $\{\mc L_\gamma\}_{\gamma\in B_{\delta}(\hg)}$.
\end{proof}

To prove Part (3) we use again the strong law of large numbers in Theorem \ref{WalkerLLN}. Given an observable $\psi\in C(a,\nu)$, $n\in\N$ and $\bo \gamma\in \Gamma^\N$, we define 
$$
\psi_0:=\psi\mbox{, }\quad \psi_n:=U_{\gamma_1}...U_{\gamma_n}\psi\quad  \mbox{ and }\quad  S_n(\psi)(x): =\sum_{i=0}^{n-1}\psi_i(x).
$$ 
$(\psi_n)_{n\in \N_0}$ is a sequence of square integrable random variables on the measure space $(M,m)$, and $ \psi_n-\mb E[\psi_n]=U_{\gamma_1}...U{\gamma_n}(\tilde \psi_n)$ where 
\begin{align*}
\tilde \psi_n(x)&:=  \psi-\int U_{\gamma_1}...U_{\gamma_n}\psi dm\\
&= \psi-\int\psi \mc L^n_{\bo\gamma}( 1)dm.
\end{align*}
We  estimate the covariance in the following lemma.
\begin{lemma}\label{DecCorLLN}
There exists constants $R>0$ and $r\in(0,1)$ such that for every $i\neq j$
$$
\mb E\left[\left(\psi_i-\mb E[\psi_i]\right)\left(\psi_j-\mb E[\psi_j]\right) \right]\leq Rr^{|j-i+1|}.
$$
\end{lemma}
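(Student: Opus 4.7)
The plan is to mirror the proof of Lemma \ref{DecCorLLN2}, but to exploit the Hilbert-metric cone contraction already established in the smooth setting (Section \ref{Smooacipm}) in place of the spectral-gap argument used in the piecewise case. First, assuming without loss of generality that $j>i$, I will use the duality $\int Uf\cdot g\,dm = \int f\cdot \mc L g\,dm$ together with the composition property (P4) to rewrite
$$
R_{ij}=\int \tilde\psi_j\cdot \mc L_{\gamma_j}\cdots \mc L_{\gamma_{i+1}}\bigl(\tilde\psi_i\cdot \mc L^i_{\bo\gamma}(1)\bigr)\,dm,
$$
exactly as in the piecewise case. Setting $h:=\tilde\psi_i\cdot\mc L^i_{\bo\gamma}(1)$, one checks $\int h\,dm=0$ directly from the definition of $\tilde\psi_i$, so $h$ is a bounded zero-mean function.

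Next, I will decompose $h$ as a scaled difference of two probability densities lying in a common invariant cone. With $c_i:=\mb E[\psi_i]=\int \psi\cdot \mc L^i_{\bo\gamma}(1)\,dm>0$, set
$$
\phi_1:=\frac{\psi\cdot \mc L^i_{\bo\gamma}(1)}{c_i},\qquad \phi_2:=\mc L^i_{\bo\gamma}(1),
$$
so that $h=c_i(\phi_1-\phi_2)$. Since $\psi\in C(a,\nu)$ and $\mc L^i_{\bo\gamma}(1)\in C(\bar\lambda a,\nu)$ by Remark \ref{invariancconesalltrans}, the product $\psi\cdot \mc L^i_{\bo\gamma}(1)$ has logarithm that is $\nu$-Hölder with constant at most $a+\bar\lambda a$. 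I will choose $a'$ large enough that $C(a',\nu)$ is invariant under $\mc L_\gamma$ for every $\gamma\in B_\delta(\hg)$ with a uniform contraction factor $\lambda'\in(\sigma,1)$, ensuring $\phi_1,\phi_2\in\tilde C(a',\nu)$.

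Proposition \ref{FinDimImpCont} combined with the uniform cone invariance then yields a contraction constant $q\in(0,1)$ independent of $\bo\gamma\in B_\delta(\hg)^{\N}$ with
$$
\theta_{a',\nu}\bigl(\mc L_{\gamma_j}\cdots\mc L_{\gamma_{i+1}}\phi_1,\,\mc L_{\gamma_j}\cdots\mc L_{\gamma_{i+1}}\phi_2\bigr)\leq q^{j-i}D_{\lambda' a',\nu}.
$$
Since the iterated images lie in $C(\lambda' a',\nu)$, a cone whose elements admit uniform upper and lower sup-bounds, the standard inequality comparing Hilbert distance to sup distance for normalized densities yields
$$
\bigl\|\mc L_{\gamma_j}\cdots\mc L_{\gamma_{i+1}}(\phi_1-\phi_2)\bigr\|_\infty\leq K\bigl(e^{q^{j-i}D_{\lambda' a',\nu}}-1\bigr)\leq K'q^{j-i},
$$
for constants $K,K'$ depending only on the cone data.

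Finally, using $c_i\leq\|\psi\|_\infty$ and $\|\tilde\psi_j\|_\infty\leq 2\|\psi\|_\infty$, I will conclude
$$
|R_{ij}|\leq \|\tilde\psi_j\|_\infty\cdot \bigl\|\mc L_{\gamma_j}\cdots\mc L_{\gamma_{i+1}}(h)\bigr\|_\infty\leq R\,q^{j-i},
$$
giving the lemma with $r:=q$ and $R$ depending only on $\psi$ and the cone data. The main obstacle will be ensuring uniformity of the cone constants (the invariance threshold $a'$, the contraction factor $q$, and the sup bounds on $C(\lambda' a',\nu)$) across the entire nonautonomous sequence of perturbed operators; this is supplied by Remark \ref{invariancconesalltrans} provided the perturbation size $\delta>0$ is fixed small enough at the outset.
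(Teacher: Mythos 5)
Your proposal follows essentially the same route as the paper's proof: after the Koopman/transfer duality rewrite, you identify $h=\tilde\psi_i\cdot\mc L^i_{\bo\gamma}(1)$ and split it (up to the scalar $c_i$) into two probability densities in an enlarged cone, then apply the uniform Hilbert-metric contraction and convert the projective bound to a sup-norm bound. The only cosmetic difference from the paper is that you normalize the two cone elements (putting them in $\tilde C(a',\nu)$), whereas the paper keeps $\phi_1=\psi\cdot\mc L^i_{\bo\gamma}1$ and $\phi_2=c_i\cdot\mc L^i_{\bo\gamma}1$ unnormalized (both with integral $c_i$) and uses the same scale-invariance of $\theta$; your version is if anything slightly cleaner at the conversion step since the sup bound for $\tilde C(\lambda'a',\nu)$ is then immediate. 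Two small points worth making explicit if you write this up: the denominator $c_i=\int\psi\,\mc L^i_{\bo\gamma}(1)\,dm$ is bounded below by $\inf\psi>0$ (since $\psi\in C(a,\nu)$ is strictly positive), so $\phi_1$ is well defined and the rescaling constant is uniform in $i$ and $\bo\gamma$; and the off-by-one in the exponent ($q^{j-i}$ vs.\ $q^{j-i-1}$, since the first operator maps into the finite-diameter subcone) is harmless because it is absorbed into the constant $R$.
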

\begin{proof}
Suppose that $j>i$ without loss of generality.
\begin{align*}
&\phantom{==}\mb E\left[\left(\psi_i-\mb E[\psi_i]\right)\left(\psi_j-\mb E[\psi_j]\right) \right]=\\
&= \mb E\left[U_{\gamma_{1}}...U_{\gamma_i}\tilde\psi_i\cdot U_{\gamma_{1}}...U_{\gamma_j}\tilde\psi_j \right]\\
&=\mb E\left[\tilde\psi_i\cdot U_{\gamma_{i+1}}...U_{\gamma_j}\tilde \psi_j\cdot\mc L_{\bo\gamma}^i1\right]\\
&=\mb E\left[ \tilde\psi_j \cdot  \mc L_{\gamma_{j}}...\mc L_{\gamma_{i+1}}\left(\tilde\psi_i\cdot\mc L_{\bo\gamma}^i1  \right) \right]\\
&=\int \tilde\psi_j (x)\cdot\mc L_{\gamma _j}...\mc L_{\gamma_{i+1}}\left(\psi\cdot\mc L_{\bo\gamma}^i1-\mc L_{\bo\gamma}^i1\int\psi \mc L_{\bo\gamma}^i1dm\right)(x)dm(x).
\end{align*}

Now, $\phi_1:=\mc L_{\bo\gamma}^i1(x)\cdot\psi(x)$ and $\phi_2:=(\int\psi \mc L_{\bo\gamma}^i1dm)\cdot\mc L_{\bo\gamma}^i1(x)$ are  positive densities with the same expectation, and both belong to $C(2a,\nu)$. In fact, $\phi_2\in C(a,\nu)\subset C(2a,\nu)$, and $\phi_1$ is the product of two densities in $C(a,\nu)$ which is a density of $C(2a,\nu)$. So, letting $r\in(0,1)$ be the uniform contraction rate of the operators $\{\mc L_\gamma\}_{\gamma\in\Gamma}$ on $C(2a,\nu)$
\begin{align*}
\theta_+(\mc L_{\gamma_j}...\mc L_{\gamma_{i+1}}(\phi_1),\mc L_{\gamma_j}...\mc L_{\gamma_{i+1}}(\phi_2))&\leq \theta_{2a,\nu}(\mc L_{\gamma_j}...\mc L_{\gamma_{i+1}}(\phi_1),\mc L_{\gamma_j}...\mc L_{\gamma_{i+1}}(\phi_2))\\
&\leq r^{j-i-1}\theta_{2a,\nu}(\phi_1,\phi_2)
\end{align*}
and
\begin{align*}
&\sup_{x}\left|\mc L_{\gamma_j}...\mc L_{\gamma_{i+1}}\left(\mc L_{\bo\gamma}^i1(x)\psi(x)\right)-\mc L_{\gamma_j}...\mc L_{\gamma_{i+1}}\left(\mc L_{\bo\gamma}^i1(x)\int\psi \mc L_{\bo\gamma}^i1dm\right)\right|\leq R_1 [\exp(R_2r^{j-i+1})-1] 
\end{align*}
with 
\begin{align*}
R_1&:=\sup\{\phi(x)|\quad x\in M,\phi\in C(2a,\nu) \}\\
R_2&:=\sup\{\theta_{2a,\nu}(\phi_1,\phi_2)|\quad \phi_1,\phi_2\in C(2a,\nu)\}.
\end{align*}
Since $\left(\int \tilde \psi_i dm\right) $ is uniformly bounded with respect to $i$, we can upper bound correlations with $R_1' [\exp(R_2r^{j-i+1})-1]$, from which the thesis follows.
\end{proof}

\begin{proof}[Proof of Part (3) of Theorem \ref{thmB}]
We know that
\begin{align*}
\mb E[\psi_k]&=\int\psi_kdm\\
&=\int U_{\gamma_1}...U_{\gamma_k}(\psi)dm\\
&=\int \psi \mc L_{\gamma_k}...\mc L_{\gamma_1}1dm
\end{align*}
and, using Theorem \ref{thmB}, since for every $k>\bar n$ and sufficiently small perturbations $\|\mc L_{\gamma_k}...\mc L_{\gamma_1}1-\phi_0\|_0<\epsilon$
$$
\left|\mb E[\psi_k]-\int \psi \phi_0 dm\right|\leq \epsilon\mb E[|\psi|],
$$
and 
$$
\int \psi\phi_0dm-\epsilon\mb E[|\psi|]\leq\liminf_{n\rightarrow\infty}\frac{1}{n}\sum_k\mb E[\psi_k]\leq\limsup_{n\rightarrow\infty}\frac{1}{n}\sum_k \mb E[\psi_k]\leq \int \psi\phi_0dm+\epsilon\mb E[|\psi|].
$$
Thanks to Lemma \ref{DecCorLLN} we can directly apply Theorem \ref{WalkerLLN} to the random variables $X_k= \psi_k$. This implies that for almost every $x\in M$
$$
\lim_{n\rightarrow\infty}\frac{1}{n}\sum_{k=0}^{n-1}\psi_k(x)-\mb E[\psi_k]=0.
$$
which implies that 
$$
\int \psi\phi_0dm-\epsilon\mb E[|\psi|]\leq\liminf_{n\rightarrow\infty}\frac{1}{n}\sum_k\psi_k(x)\leq\limsup_{n\rightarrow\infty}\frac{1}{n}\sum_k \psi_k(x)\leq \int \psi\phi_0dm+\epsilon\mb E[|\psi|].
$$
\end{proof}

 \end{appendices}

\bibliographystyle{amsalpha}
\bibliography{bibliography}

\end{document}